\definecolor{darkblue}{rgb}{0.96, 0.96, 0.86}
\definecolor{darkgray}{rgb}{1.00, 0.97, 0.90}
\newtheorem{thm}{Theorem}[section]
\newtheorem{lem}[thm]{Lemma}
\newtheorem{cx}[thm]{Conjecture}
\newtheorem{pr}[thm]{Proposition}
\newtheorem{cor}[thm]{Corollary}
\theoremstyle{definition}
\newtheorem*{re}{Remark}
\title{  The second largest eigenvalue of some nonnormal Cayley graphs on symmetric groups }
\author{Yuxuan Li, Binzhou Xia and Sanming Zhou\\ 
School of Mathematics and Statistics, The University of Melbourne, Parkville, VIC 3010, Australia}
\date{}
\begin{document}
\maketitle
\openup 0.6 \jot 

\renewcommand{\thefootnote}{\empty}
\footnotetext{E-mail addresses: yuxuan11@student.unimelb.edu.au (Yuxuan Li), binzhoux@unimelb.edu.au (Binzhou Xia), sanming@unimelb.edu.au (Sanming Zhou)}

\begin{abstract}
A Cayley graph on the symmetric group $S_n$ is said to have the Aldous property if its strictly second largest eigenvalue (that is, the largest eigenvalue strictly smaller than the degree) is attained by the standard representation of $S_n$. For $1\leq r < k < n$, let $C(n,k;r)$ be the set of $k$-cycles of $S_n$ moving every point in $\{1, \ldots, r\}$. Recently, Siemons and Zalesski [J. Algebraic Combin. 55 (2022) 989--1005] posed a conjecture which is equivalent to saying that for any $n \ge 5$ and $1\leq r<k<n$ the nonnormal Cayley graph $\mathrm{Cay}(S_n, C(n,k;r))$ on $S_n$ with connection set $C(n,k;r)$ has the Aldous property. Solving this conjecture, we prove that all these graphs have the Aldous property except when (i) $(n, k, r) = (6, 5, 1)$ or (ii) $n$ is odd, $k = n-1$, and $1 \le r < \frac{n}{2}$. Along the way we determine all irreducible representations of $S_n$ that can achieve the strictly second largest eigenvalue of $\mathrm{Cay}(S_n, C(n,n-1;r))$ as well as the smallest eigenvalue of this graph.

\end{abstract}
\section{Introduction} 
\label{sec:Introduction}

We only consider finite simple undirected graphs in this paper. Suppose $\Gamma$ is such a graph and $A(\Gamma)$ is the adjacency matrix of $\Gamma$. Since $A(\Gamma)$ is real and symmetric, all its eigenvalues are real numbers, and they are called the \emph{eigenvalues} of $\Gamma$. We usually arrange them in a non-increasing manner as $\lambda_1\geq \lambda_2\geq \cdots\geq \lambda_n$ and denote the distinct ones among them by $\alpha_1> \alpha_2>\cdots> \alpha_m$. As in \cite{LXZ}, we refer to $\alpha_2$ as the \emph{strictly second largest eigenvalue} of $\Gamma$. Whenever we want to stress the dependence of the $i$-th largest eigenvalue (strictly $i$-th largest eigenvalue, respectively) of $\Gamma$ or a real symmetric matrix $M$, we write $\lambda_i (\Gamma)$ or $\lambda_i(M)$ ($\alpha_i(\Gamma)$ or $\alpha_i(M)$, respectively) in place of $\lambda_i$ ($\alpha_i$, respectively). The \emph{spectrum} of $M$, written $\mathrm{Spec}(M)$, is the collection of the eigenvalues of $M$ with multiplicities.  

Let $G$ be a finite group with identity element $\mathbf{1}$, and $S$ an inverse-closed subset of $G\setminus \{\mathbf{1}\}$. The \emph{Cayley~graph} on $G$ with respect to $S$, denoted by $\mathrm{Cay}(G, S)$, is the $|S|$-regular graph with vertex set $G$ and edge set $\{\{g,gs\}~|~g\in G, s\in S\}$. It is readily seen that $\mathrm{Cay}(G,S)$ is connected if and only if its \emph{connection set} $S$ is a generating subset of $G$. We say that $\mathrm{Cay}(G,S)$ is a \emph{normal} Cayley graph if $S$ is closed under conjugation; otherwise it is called a \emph{nonnormal} Cayley graph.

In \cite{A,AM,D,M}, it has been shown that the second largest eigenvalue of a regular graph reflects to some extent how well connected the overall graph is. An expander, roughly speaking, is a graph with small degree but strong connectivity properties. The second largest eigenvalue of Cayley graphs has been a focus of study for a long time as many important expanders are Cayley graphs. See \cite{HLW} for a survey on expander graphs with applications and \cite[Section 8]{LZ} for a collection of results on the second largest eigenvalue of Cayley graphs. The celebrated Aldous' spectral gap conjecture asserts that the second largest eigenvalue of any connected Cayley graph on the symmetric group $S_n$ with respect to a set of transpositions is attained by the standard representation of $S_n$. This conjecture in its general form was proved in \cite{CLR} after nearly twenty years of standing. In general, a Cayley graph $\mathrm{Cay}(S_n,S)$ on $S_n$ is said to have the \emph{Aldous property} \cite{LXZ} if its strictly second largest eigenvalue is attained by the standard representation of $S_n$. In \cite{LXZ}, the authors of the present paper identified three families of normal Cayley graphs on $S_n$ with the Aldous property, one of which can be considered as a generalization of the normal case of Aldous' spectral gap conjecture. Refer to \cite{HHC,PP} for further generalizations of Aldous' spectral gap conjecture for normal Cayley graphs on symmetric groups. 

One specific way to generalize Aldous' spectral gap conjecture is to verify whether the Cayley graph on a symmetric group with connection set formed by cycles of certain length has the Aldous property. Let $C(n,k)$ be the conjugacy class of all $k$-cycles in $S_n$ for $2\leq k\leq n$. With the help of the representation theory of symmetric groups, Siemons and Zalesski \cite{SZ} determined the strictly second largest eigenvalue of $\mathrm{Cay}(S_n, C(n,k))$ for $k=n$ or $n-1$. Their results indicate that when $k$ is $n$ or $n-1$, the normal Cayley graph $\mathrm{Cay}(S_n, C(n,k))$ does not possess the Aldous property. In \cite[Theorem 1.2]{MR}, Maleki and Razafimahatratra proved that if $n-k\ge 2$ is relatively small compared to $n$ then $\mathrm{Cay}(S_n, C(n,k))$ has the Aldous property. They conjectured \cite[Conjecture 1.4]{MR} that the same result holds for any $k$ between $2$ and $n-2$, that is, for any $n \ge 4$ and $2 \leq k \leq n-2$, the strictly second largest eigenvalue of $\mathrm{Cay}(S_n, C(n,k))$ is attained by the standard representation of $S_n$, and its value is $\frac{n-k-1}{n-1}\binom{n}{k}(k-1)!$. This conjecture was proved to be true for $k = 2$ \cite{DS}, $k=3$ \cite{HH} (see also \cite[Remark 4.1]{LXZ}), $k = 4$ \cite{HHC} and $k=5$ \cite{MR}. Recently, this conjecture was fully proved in \cite{LXZ2}, where the authors of the present paper discussed the strictly second largest eigenvalue of $\mathrm{Cay}(S_n,C(n,I))$ with $C(n,I)=\cup_{k\in I} C(n,k)$ and $I\subseteq \{2,3,\ldots,n\}$.

Determining the strictly second largest eigenvalue of nonnormal Cayley graphs is quite challenging in general
but has been settled for several families. For $1\leq i\leq j\leq n$, let $r_{i,j}\in S_n$ be the permutation which maps $i, i+1, \ldots, j-1, j$ to $j, j-1, \ldots, i+1, i$, respectively, and fixes all other points in $[n] = \{1, 2, \ldots, n\}$. The Cayley graphs on $S_n$ with connection sets $\{r_{1,j}~|~ 2\leq j\leq n\}$ and $\{r_{i,j}~|~1\leq i< j \leq n\}$ are called the \emph{pancake graph} $P_n$ and \emph{reversal graph} $R_n$, respectively. In \cite{C1}, Cesi determined the second largest eigenvalue of $P_n$ for $n\ge 3$, which was proved to be attained by the standard representation of $S_n$. Thus the pancake graphs have the Aldous property, which is called Property (A) in \cite{C1}.  Then Chung and Tobin in \cite{CT} determined the second largest eigenvalue of $R_n$ by recursively decomposing $R_n$ into $P_n$ and copies of $R_{n-1}$, where  one can tell from the proof that the reversal graphs also have the Aldous property. 

The method used in \cite{CT} for dealing with $R_n$ was further utilized by Huang and Huang \cite{HH} to determine the second largest eigenvalues of the Cayley graphs on the alternating group $A_n$ with connection sets $\{(1~2~i), (1~i~2)~|~ 3 \leq  i \leq  n\}$, $\{(1~ i~ j), (1~ j~ i)~|~ 2\leq i< j\leq n\}$ and $\{(i~ j~ k), (i~ k~ j)~|~1\leq i<  j<k\leq n\}$, respectively. Here $(i~j~k)$ denotes the $3$-cycle in  $A_n$ sending $i$ to $j$, $j$ to $k$ and $k$ to $i$. Suppose $r,k,n$ are three integers satisfying $1\leq r<k<n$. In \cite{SZ}, Siemons and Zalesski defined $H:=C(n,k;r)$, as a subset of $C(n,k)$, to be the set of $k$-cycles of $S_n$ moving every point from $1$ to $r$, and they determined all the eigenvalues of $H^+:=\sum_{h\in H}h\in \mathbb{C}S_n$ on the natural permutation module $M^{(n-1,1)}$ of $S_n$, among  which the second largest one is  $$\mu_2(n,k;r)=(k-2) !\binom{n-r}{k-r} \frac{1}{n-r}\left((k-1)(n-k)-\frac{(k-r-1)(k-r)}{n-r-1}\right).$$ Then $\mu_2(n,k;r)$ provides a lower bound for the strictly second largest eigenvalue of the nonnormal Cayley graph $\mathrm{Cay}(S_n,C(n,k;r))$ (see \cite[Theorem 1.3]{SZ}). Siemons and Zalesski \cite{SZ} then conjectured that $\mu_2(n,k;r)$ is exactly the strictly second largest eigenvalue of $\mathrm{Cay}(S_n,C(n,k;r))$ for all triplets $(n,k,r)$ satisfying $1 \leq r<k<n$. Since the natural permutation module $M^{(n-1,1)}$ decomposes into one trivial representation and one standard representation, $\mu_2(n,k;r)$  actually is the largest eigenvalue of $H^+$ on the standard representation of $S_n$.    Thus their conjecture can be restated in the following form.
\begin{cx}[\cite{SZ}] \label{cx:the conjecture}
	Suppose $n\ge 5$ and $1 \leq r<k<n$. The strictly second largest eigenvalue of $\mathrm{Cay}(S_n, C(n,k;r))$ is $\mu_2(n,k;r)$, attained by the standard representation of $S_n$. In other words, $\mathrm{Cay}(S_n, C(n,k;r))$ has the Aldous property.
\end{cx}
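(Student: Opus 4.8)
The plan is to prove Conjecture~\ref{cx:the conjecture} through the representation theory of $S_n$ applied to the nonnormal connection set $H=C(n,k;r)$. Since $H$ is inverse-closed, the adjacency spectrum of $\mathrm{Cay}(S_n,H)$ is governed by the group-algebra element $H^+=\sum_{h\in H}h$: for each irreducible representation $\rho_\lambda$ of $S_n$ indexed by a partition $\lambda\vdash n$, the eigenvalues of the adjacency matrix contributed by the $\rho_\lambda$-isotypic component are exactly the eigenvalues of the Hermitian matrix $\rho_\lambda(H^+)$, each repeated $d_\lambda=\dim\rho_\lambda$ times. The trivial representation $(n)$ supplies the degree $|H|$. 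By the result of Siemons and Zalesski quoted above, the largest eigenvalue of $\rho_{(n-1,1)}(H^+)$ equals $\mu_2(n,k;r)$, and since $M^{(n-1,1)}$ is the sum of the trivial and standard representations this value is already pinned down. Consequently the Aldous property reduces to a single uniform upper bound: for every partition $\lambda\neq(n),(n-1,1)$, every eigenvalue of $\rho_\lambda(H^+)$ is at most $\mu_2(n,k;r)$.

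The key structural tool I would exploit is that $H$ is invariant under conjugation by the Young subgroup $K=S_r\times S_{n-r}$ (permuting $\{1,\dots,r\}$ and $\{r+1,\dots,n\}$ separately preserves both cycle length and the property of moving every point of $\{1,\dots,r\}$). Hence $\rho_\lambda(H^+)$ commutes with $\rho_\lambda(K)$, i.e.\ it is a $K$-module endomorphism of $V_\lambda$. Restricting along the branching
$$
V_\lambda|_K \;=\; \bigoplus_{\alpha\vdash r,\ \beta\vdash n-r} c^{\lambda}_{\alpha\beta}\,\bigl(S^{\alpha}\boxtimes S^{\beta}\bigr),
$$
Schur's lemma forces $\rho_\lambda(H^+)$ to be block diagonal with respect to the $K$-isotypic decomposition, each block having size equal to the Littlewood--Richardson coefficient $c^{\lambda}_{\alpha\beta}$. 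This replaces one large $d_\lambda\times d_\lambda$ matrix by a family of much smaller matrices whose entries can be extracted from characters of $S_n$ and $K$, or combinatorially from the cyclic pattern of first-block and second-block points around a $k$-cycle. I would first carry this out for the representations nearest the standard one, namely $(n-2,2)$ and $(n-2,1,1)$, obtain closed forms for the relevant blocks, and compare them directly with $\mu_2(n,k;r)$.

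For the remaining partitions I would set up a recursion in the spirit of Chung--Tobin and Huang--Huang. Writing $H^+$ as a $K$-invariant sub-sum of the central element $C(n,k)^{+}$ — whose scalar action $\tfrac{|C(n,k)|\,\chi_\lambda(\sigma)}{d_\lambda}$ on a $k$-cycle $\sigma$ is completely explicit — provides a clean analytic anchor: the eigenvalues of $\rho_\lambda(H^+)$ are controlled by this scalar together with a correction term accounting for those $k$-cycles that fix at least one point of $\{1,\dots,r\}$. To estimate the correction I would condition on the behaviour of a cycle at a distinguished free point of $\{r+1,\dots,n\}$, relating $\rho_\lambda(H^+)$ to the analogous operators for $C(n-1,k;r)$ and thereby reducing the upper bound to cases with smaller $n-k$ or smaller $n$, bottoming out in a finite check of small parameters.

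The hard part will be the regime where $k$ is close to $n$, and above all the extreme case $k=n-1$. There the gap between $\mu_2(n,k;r)$ and the next candidate eigenvalues is minute, so the crude character estimates that suffice when $k/n$ is small break down and one is forced into sharp, essentially exact, evaluations of $\rho_\lambda(H^+)$ for $\lambda$ adjacent to $(n-1,1)$ — and, for $k=n-1$, also for the partitions at the opposite end of the dominance order obtained by tensoring with the sign representation. I expect that determining precisely which irreducibles can compete with the standard one when $k=n-1$, and then pushing the inequality through for every admissible $r$ and the smallest values of $n$ by direct computation, is where essentially all of the work lies.
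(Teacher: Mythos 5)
Your proposal has a fatal gap before any technical work begins: the statement you set out to prove is \emph{false}, and the paper's resolution of Conjecture~\ref{cx:the conjecture} is in part a disproof. Your entire strategy reduces the Aldous property to a single uniform bound — every eigenvalue of $\rho_\lambda(H^+)$ for $\lambda\neq(n),(n-1,1)$ is at most $\mu_2(n,k;r)$ — and that bound provably fails in two regimes. When $k=n-1$ and $n$ is odd, every element of $H=C(n,n-1;r)$ is an odd permutation, so by Lemma~\ref{lem:transpose} one has $\rho_{(2,1^{n-2})}(H^+)=-\rho_{(n-1,1)}(H^+)$; Lemma~\ref{lem:multiplicity_n-1} then gives
\begin{equation*}
\lambda_1\bigl(\rho_{(2,1^{n-2})}(H^+)\bigr)=-\lambda_{\min}\bigl(\rho_{(n-1,1)}(H^+)\bigr)=(n-r)(n-3)!\quad(2\le r\le n-2),
\end{equation*}
and $(n-2)!$ when $r=1$, which strictly exceeds $\mu_2(n,n-1;r)=r(n-3)!$ precisely when $1\le r<n/2$. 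So the sign-twisted standard representation $(2,1^{n-2})$ beats the standard one, and the conjecture fails there (Theorem~\ref{thm:n-1_odd}); there is also the sporadic failure at $(n,k,r)=(6,5,1)$, where $(3,2,1)$ attains $\alpha_2=9$ (Theorem~\ref{thm:n-1_even}(a)). You half-sense the danger — you flag the partitions "obtained by tensoring with the sign representation" at $k=n-1$ as needing exact evaluation — but your stated plan is still to "push the inequality through for every admissible $r$," which no refinement of your Littlewood--Richardson block analysis can accomplish: one cannot prove a false inequality. The correct resolution (Theorem~\ref{thm:main}) is to prove the Aldous property for all parameters \emph{except} these two families and to exhibit the larger competing eigenvalue in the exceptions.

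Setting the truth-value issue aside, your machinery is sensible and partially parallels the paper's. The $K=S_r\times S_{n-r}$-invariance of $H$ and the resulting block decomposition of $\rho_\lambda(H^+)$ into blocks of size $c^\lambda_{\alpha\beta}$ is a genuine observation that the paper does not use; the paper instead writes $H=\dot\cup_{i=r+1}^n P_i$ with $P_i=C(n,n-1)\cap G_i$, or equivalently $H=C(n,n-1)\setminus(\dot\cup_{i=1}^r P_i)$ — your "central anchor $C(n,k)^+$ minus a correction for cycles fixing a point of $[r]$" is essentially this second decomposition — and controls all remaining irreducibles via Branching Rule together with Weyl Inequalities (Lemmas~\ref{lem:n-1_cycle_eigenvalues} and~\ref{lem:C(n,n-1;r)}). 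For $4\le k\le n-2$ the paper runs a double induction on $n-k$ and $n-r$ using the equitable-partition bound of Lemma~\ref{lem:HHC_upper_bound_1} together with the recurrence \eqref{eq:recurrence}, $\mu_2(n,k;r)=\mu_2(n-1,k;r)+\mu_2(n,k;r+1)$, which matches your conditioning-at-a-free-point recursion in spirit, though you would also need the explicit recurrence and the $k=n-1$, $k=n-2$ base cases to make it close. A further technical point your reduction glosses over: for odd $k$ the graph is disconnected, $(1^n)$ also attains the degree, and the strictly second largest eigenvalue is $\lambda_3$ rather than $\lambda_2$, which is why the paper needs the strengthened Lemma~\ref{lem:HHC_upper_bound}. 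But the indispensable missing idea remains the recognition that $(2,1^{n-2})$ (and, at $(6,5,1)$, the non-hook $(3,2,1)$) can genuinely overtake the standard representation, so any viable argument must bifurcate into a proof for the generic cases and a disproof for the exceptional ones.
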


The case $k=r+1$ with $2 \leq r\leq n-2$ of this conjecture has been confirmed in \cite[Theorem 1.4]{SZ}. A recent result in \cite{HH} also indicates that $\alpha_2(\mathrm{Cay}(S_n,C(n, 3 ; 2)))=\mu_2(n,3;2)$. Combining these with the very early work in \cite{FOW} about $\lambda_2(\mathrm{Cay}(S_n,C(n, 2; 1)))=n-2$,  we have the following theorem. 

\begin{thm}{\rm  (\cite{FOW}, \cite{HH} and \cite[Theorem 1.4]{SZ})}\label{thm:induction_base}
	Suppose $n\ge 5$ and $2 \leq k \leq n-1$. The strictly second largest eigenvalue of $\mathrm{Cay}(S_n, C(n, k ; k-1))$ is $(k-1) !(n-k)$, attained by the standard representation of $S_n$. In other words, $\mathrm{Cay}(S_n, C(n, k ; k-1))$ has the Aldous property.
\end{thm}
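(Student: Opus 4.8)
The plan is to pass to the representation theory of $S_n$ and reduce the statement to an eigenvalue bound over irreducible representations. Writing $H=C(n,k;k-1)$ and $H^{+}=\sum_{h\in H}h\in\mathbb{C}S_n$, the adjacency operator of $\mathrm{Cay}(S_n,H)$ is right multiplication by $H^{+}$ on $\mathbb{C}S_n$, so by the standard decomposition $\mathbb{C}S_n\cong\bigoplus_{\lambda\vdash n}V_\lambda^{\oplus\dim V_\lambda}$ its spectrum is the union over all partitions $\lambda\vdash n$ of the spectra of the matrices $\rho_\lambda(H^{+})$, each eigenvalue of $\rho_\lambda(H^{+})$ occurring with multiplicity $\dim V_\lambda$. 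The trivial representation $\lambda=(n)$ contributes the degree $d=(n-k+1)(k-1)!$, which is the spectral radius; when $k$ is odd every element of $H$ is an even permutation, so the sign representation $(1^n)$ also contributes $d$ and $\mathrm{Cay}(S_n,H)$ is disconnected, but in all cases $\alpha_2$ is the largest eigenvalue strictly below $d$. Thus the theorem reduces to two assertions: (a) the standard representation $\lambda=(n-1,1)$ has largest eigenvalue exactly $(k-1)!(n-k)$; and (b) every remaining $\lambda$ (that is, $\lambda\neq(n)$, and $\lambda\neq(1^n)$ when $k$ is odd) satisfies $\lambda_{\max}(\rho_\lambda(H^{+}))\le(k-1)!(n-k)$. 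Part (a), together with \cite[Theorem 1.3]{SZ}, gives the lower bound $\alpha_2\ge(k-1)!(n-k)$, and part (b) is the matching upper bound.

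For (a) I would compute $\rho_{(n-1,1)}(H^{+})$ concretely on the permutation module $M^{(n-1,1)}=\mathbb{C}^n$, the standard representation being its sum-zero subspace. Every $h\in H$ has support $\{1,\dots,k-1\}\cup\{j\}$ for a unique $j\in\{k,\dots,n\}$, so the matrix $M$ of $H^{+}$ on $\mathbb{C}^n$, whose $(\ell,i)$-entry counts the $h\in H$ with $h(i)=\ell$, is invariant under the conjugation action of $\mathrm{Sym}(\{1,\dots,k-1\})\times\mathrm{Sym}(\{k,\dots,n\})$ that preserves $H$. A short count of $k$-cycles through prescribed points shows that, with respect to the splitting $\{1,\dots,k-1\}$ versus $\{k,\dots,n\}$, the matrix $M$ has a two-by-two block form with entries built from $(n-k+1)(k-2)!$, $(k-2)!$ and $(n-k)(k-1)!$. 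Diagonalising it (the all-ones vector gives $d$) yields the standard-representation eigenvalues $(k-1)!(n-k)$, $-(n-k+1)(k-2)!$ and $(k-2)!\big((n-k)(k-2)-1\big)$, of which $(k-1)!(n-k)$ is the largest; this simultaneously establishes (a) and pins down the value.

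The substantive difficulty is (b). My preferred route mirrors the recursive decomposition of \cite{CT,HH}: partitioning $S_n$ into the $n$ blocks $\{\sigma:\sigma(n)=\ell\}$ splits $H$ as $C(n-1,k;k-1)\sqcup D_n$, where $D_n$ is the set of $k$-cycles on $\{1,\dots,k-1,n\}$; the elements fixing $n$ act within blocks and induce $n$ disjoint copies of $\mathrm{Cay}(S_{n-1},C(n-1,k;k-1))$, while $D_n$ supplies the inter-block edges. Inducting on $n$ with $k$ and $r=k-1$ fixed, one feeds in $\alpha_2=(k-1)!(n-1-k)$ for the within-block graph and controls the coupling by eigenvalue interlacing on a suitable quotient. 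I expect the coupling term to be the main obstacle: separating the new standard representation of $S_n$ from the copies of the standard representation of $S_{n-1}$ so that the inter-block contribution does not raise any eigenvalue above $(k-1)!(n-k)$ is precisely the delicate estimate carried out in those works. A further subtlety is the boundary case $k=n-1$ (which is also the base $n=k+1$), where the within-block graph degenerates to the normal Cayley graph on the full class of $(n-1)$-cycles rather than a member of the same family, so the recursion does not close by itself there. For these reasons I would ultimately invoke the representation-theoretic estimates already available: \cite[Theorem 1.4]{SZ} furnishes (b) for $3\le k\le n-1$ and \cite{FOW} settles the star-graph case $k=2$, which together cover all $2\le k\le n-1$ and complete the proof, while \cite{HH} independently secures $k=3$.
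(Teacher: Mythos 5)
Your proposal is correct and ends up taking the same route as the paper: Theorem~\ref{thm:induction_base} is not proved from scratch there but is exactly the combination of \cite{FOW} (the case $k=2$), \cite{HH} (the case $k=3$) and \cite[Theorem 1.4]{SZ} (the cases $3\le k\le n-1$), which is precisely the fallback you invoke once your recursive sketch for part (b) is set aside. Your block-matrix computation of the standard-representation spectrum agrees with Lemma~\ref{lem:eigenvalues_standard_2} (quoted from \cite{SZ}), and your own observation that the recursion does not close at $k=n-1$ is consistent with the paper likewise resting this theorem entirely on the cited literature.
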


Further evidence supporting Conjecture~\ref{cx:the conjecture} can be found in \cite{HH}, where one can deduce that $\alpha_2(\mathrm{Cay}(S_n, C(n,3;1)))=n^2-5n+5$, which is equal to $\mu_2(n,3;1)$. Combining this with Theorem \ref{thm:induction_base}, we see that Conjecture~\ref{cx:the conjecture} is true for $k = 2, 3$. In this paper we solve Conjecture~\ref{cx:the conjecture} in the general case where $4 \leq k \leq n-1$. We prove this conjecture is almost always true and determine exactly when the conjecture is not true. The following is a summary of our main results.

\begin{thm}\label{thm:main}
Suppose $n \ge 5$, $k\ge 4$ and $1\leq r<k\leq n-1$. Then $\mathrm{Cay}(S_n,C(n,k;r))$ has the Aldous property except for the following cases:
\begin{itemize}
	\item[\rm (1)]  $n=6$, $k=5$ and $r=1$;
	\item[\rm (2)]  $n\ge 5$ is odd, $k=n-1$ and $1\leq r<\frac{n}{2}$.  
\end{itemize}  
\end{thm}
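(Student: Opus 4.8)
The plan is to prove Theorem~\ref{thm:main} by reducing the determination of the strictly second largest eigenvalue of $\mathrm{Cay}(S_n,C(n,k;r))$ to a problem in the representation theory of $S_n$. Since $C(n,k;r)$ is inverse-closed, the adjacency matrix $A = H^+$ acts as a (self-adjoint) operator on the regular representation, and by Schur's lemma its eigenvalues are obtained by evaluating $H^+$ on each irreducible representation $S^\lambda$ of $S_n$, with multiplicity $\dim S^\lambda$. The degree $|C(n,k;r)|$ is the top eigenvalue, attained on the trivial representation $S^{(n)}$. The lower bound $\mu_2(n,k;r)$ is already guaranteed to be an eigenvalue, attained by the standard representation $S^{(n-1,1)}$, by the computation of Siemons and Zalesski quoted above. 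Hence the whole problem is to show that, apart from the two exceptional families, no \emph{other} irreducible $S^\lambda$ with $\lambda \ne (n)$ contributes an eigenvalue of $H^+$ exceeding $\mu_2(n,k;r)$.

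The key organizing idea is an induction on $n$ combined with a branching/restriction argument. First I would set up a recursive decomposition of $\mathrm{Cay}(S_n,C(n,k;r))$ analogous to the one Chung--Tobin and Huang--Huang used for reversal and alternating-group graphs, peeling off the action on the last point $n$ so that the graph on $S_n$ decomposes, up to a controlled perturbation, into copies of graphs built from $C(n-1,\cdot;\cdot)$ together with cross terms counting $k$-cycles that do move $n$. Theorem~\ref{thm:induction_base} supplies the base of the induction in the boundary case $k=r+1$, and Conjecture~\ref{cx:the conjecture} being known for $k=2,3$ handles small $k$; the genuinely new work is the inductive step for $4\le k\le n-1$. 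For each non-standard, non-trivial $\lambda$ I would bound the largest eigenvalue of $H^+$ on $S^\lambda$ by restricting to $S_{n-1}$ via the branching rule $S^\lambda\!\downarrow_{S_{n-1}}=\bigoplus_\mu S^\mu$ and applying interlacing (Cauchy/eigenvalue interlacing for the quotient) together with the inductive hypothesis on the smaller graphs. This reduces an infinite problem to checking finitely many ``shallow'' partitions $\lambda$ (those near the staircase boundary, e.g.\ $(n-2,2)$, $(n-2,1,1)$, and their conjugates) directly.

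The decisive step is the explicit eigenvalue estimate on those finitely many near-extremal representations, and this is where the exceptional cases must emerge. For the special connection set $C(n,n-1;r)$ the $(n-1)$-cycles interact with the sign representation in a delicate way: evaluating $H^+$ on $S^{(1^n)}$ and on low-lying hook or two-row partitions produces eigenvalues whose comparison with $\mu_2(n,n-1;r)$ depends on the parity of $n$ and on the inequality $r<\tfrac{n}{2}$, precisely accounting for family~(2); the sporadic case $(n,k,r)=(6,5,1)$ should fall out of the same computation as a low-dimensional coincidence. Because of this, I would, following the excerpt's promise, carry out a complete and separate analysis of $\mathrm{Cay}(S_n,C(n,n-1;r))$: determine \emph{all} irreducible representations that can attain its strictly second largest eigenvalue, and also pin down its smallest eigenvalue (governed by $S^{(1^n)}$ or a near-sign representation), since the sign-representation eigenvalue is what can overtake $\mu_2$ when $n$ is odd.

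The main obstacle I anticipate is twofold. The harder analytic difficulty is obtaining \emph{uniform} upper bounds on the eigenvalue of $H^+$ on an arbitrary non-extremal $S^\lambda$ that are strong enough to beat $\mu_2(n,k;r)$ across the full range $4\le k\le n-1$; unlike the normal case, $H^+$ is not central, so one cannot read eigenvalues off a single character value, and the interlacing/branching bounds must be sharp enough not to produce spurious exceptions. The harder bookkeeping difficulty is the clean isolation of exactly the two exceptional families: one must verify both that the Aldous property genuinely \emph{fails} there (by exhibiting a non-standard representation whose eigenvalue strictly exceeds $\mu_2$) and that it \emph{holds} everywhere else, which requires the boundary comparisons to be tight and the parity condition $r<\tfrac{n}{2}$ to be shown both necessary and sufficient for $k=n-1$ with $n$ odd.
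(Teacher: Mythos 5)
Your plan follows the same architecture as the paper: a separate, complete spectral analysis of $\mathrm{Cay}(S_n,C(n,n-1;r))$ via stabilizer pieces $P_i=C(n,n-1)\cap G_i$, Branching Rule and Weyl inequalities (the paper's Section~\ref{sec:proof_for_theorem_n-1}), followed by a double induction on $n-k$ and $n-r$ for $4\le k\le n-2$ that peels off the point $n$, bounding eigenvalues by interlacing against the two pieces $H\cap G_n\cong C(n-1,k;r)$ and $H\setminus(H\cap G_n)\cong C(n,k;r+1)$ (the paper's Section~\ref{sec:n-2}, via Lemma~\ref{lem:HHC_upper_bound_1}). However, as written your inductive step has two genuine gaps. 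The lesser one: the additive bound $\lambda_2(\mathrm{Cay}(S_{n-1},C(n-1,k;r)))+\lambda_2(\mathrm{Cay}(S_n,C(n,k;r+1)))$ only closes the induction because of the exact identity $\mu_2(n,k;r)=\mu_2(n-1,k;r)+\mu_2(n,k;r+1)$ (the paper's \eqref{eq:recurrence}), which you never state; without it there is no reason the inductive bound should match the target value, and this identity is precisely what makes the whole strategy viable.

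The serious gap is that your induction stalls exactly where the exceptional family (2) feeds back into it. When $k=n-2$ with $n$ even, the restricted graph $\mathrm{Cay}(S_{n-1},C(n-1,n-2;r))$ is an instance of the $k=n-1$ case on $S_{n-1}$ with $n-1$ odd, and by the very analysis you propose for that case it \emph{fails} the Aldous property whenever $r<(n-1)/2$: its second eigenvalue is $(n-1-r)(n-4)!$, attained by $(2,1^{n-3})$, strictly exceeding $\mu_2(n-1,n-2;r)=r(n-4)!$. Hence both the graph-level interlacing bound and your per-representation branching bound (for any $\lambda$ whose restriction contains $(2,1^{n-3})$, e.g.\ $(3,1^{n-3})$ or $(2,2,1^{n-4})$) overshoot $\mu_2(n,n-2;r)$, and no inductive hypothesis rescues you. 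The paper closes this hole with a self-contained direct analysis of $C(n,n-2;r)$ for even $n$ and $1\le r<(n-1)/2$ (Lemmas~\ref{lem:even_k=n-2_r=1_1}--\ref{lem:k=n-2_2}), using the decomposition $C(n,n-2;r)=C(n,n-2)\setminus\bigl(\cup_{i\le r}(C(n,n-2)\cap G_i)\bigr)$, character values on $(n-2)$-cycles, and Weyl inequalities; nothing in your proposal supplies this step. Two smaller corrections: for odd $k$ the graph is disconnected (the connection set lies in $A_n$), so the quotient-matrix lemma bounds only $\lambda_2$, which is the degree; one needs the paper's sign-twisted strengthening (Lemma~\ref{lem:HHC_upper_bound}) to bound $\lambda_3=\alpha_2$. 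And for odd $n$, $k=n-1$, the representation that overtakes $\mu_2$ is not the sign representation (it gives the smallest eigenvalue $-|H|$) but $(2,1^{n-2})$, whose top eigenvalue is the negative of the most negative eigenvalue of the standard representation.
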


The overall method used in this paper is induction. One induction base is the case $r=k-1$ and the Aldous property of $\mathrm{Cay}(S_n,C(n,k;k-1))$ as seen in  Theorem~\ref{thm:induction_base}.  We then build another induction base in Section~\ref{sec:proof_for_theorem_n-1}, dealing with the case $k=n-1$. As we can see from Theorem~\ref{thm:main}, there are some exceptions in the case $k=n-1$. Thus in Section~\ref{sec:n-2} we first build some additional induction base in the case $k=n-2$ and then we do induction on both $n-k$ and $n-r$ to show that $\mathrm{Cay}(S_n,C(n,k;r))$ has the Aldous property whenever $4\leq k\leq n-2$. Here this induction method is similar to that used in \cite{CT} for dealing with the reversal graphs. We decompose $\mathrm{Cay}(S_n,C(n,k;r))$ into one $\mathrm{Cay}(S_n,C(n,k;r+1))$ and $n$ copies of $\mathrm{Cay}(S_{n-1},C(n-1,k;r))$.   Thanks to a recurrence relation for $\mu_2(n,k;r)$ (see \eqref{eq:recurrence}), we can finally prove in Theorems~\ref{thm:even_n-2} and \ref{thm:odd_n-2} that $\alpha_2(\mathrm{Cay}(S_n,C(n,k;r)))=\mu_2(n,k;r)$ for any $4\leq k\leq n-2$ and $1 \le r < k$. 

The most subtle case $k = n - 1$ will be handled in Section~\ref{sec:proof_for_theorem_n-1}, where the approach used is to decompose the nonnormal connection set $C(n,n-1;r)$ into some mutually disjoint subsets $P_i:=C(n,n-1)\cap G_{i}$ with $r+1\leq i \leq n$. Here $G_{i}$ is the stabilizer of $i\in [n]$ in $S_n$. This kind of decomposition works well when $r$ is large. For small $r$, we decompose  $C(n,n-1;r)$ in another way by deleting $P_i:=C(n,n-1)\cap G_{i}$ with $1\leq i\leq r$ from $ C(n,n-1)$. Now each $P_i$ is normal not in $S_n$ but in some subgroup of $S_n$ which is isomorphic to $S_{n-1}$. With these decompositions, we can apply Branching Rule  and use irreducible characters of $S_{n-1}$ to express the eigenvalues of each irreducible representation of $S_n$ on every $P_i^+ =\sum_{h\in P_i} h \in \mathbb{C}S_n$. Then by Weyl Inequalities, we can make use of these eigenvalues to bound that of  each irreducible representation of $S_n$ on $C(n,n-1;r)$ and finally identify the strictly second largest eigenvalue of $\mathrm{Cay}(S_n,C(n,n-1;r))$. 

The main results in Section~\ref{sec:proof_for_theorem_n-1} are Theorems~\ref{thm:n-1_odd} and \ref{thm:n-1_even}, which are summarized in the following Table~\ref{tab:main}. The third column of this table shows the partitions of $n$ with their corresponding irreducible representations attaining the strictly second largest eigenvalue  of $\mathrm{Cay}(S_n,C(n,n-1;r))$ and the last column demonstrates the multiplicity of that eigenvalue. Note that the standard representation of $S_n$ just corresponds to the partiton $(n-1,1)$. Thus from Table~\ref{tab:main} we know exactly when $\mathrm{Cay}(S_n,C(n,n-1;r))$ has the Aldous property.  As byproducts of Theorems~\ref{thm:n-1_odd} and \ref{thm:n-1_even}, we also determined the (second) smallest eigenvalue of $\mathrm{Cay}(S_n,C(n,n-1;r))$ in Corollaries~\ref{cor:odd_small} and \ref{thm:even_small}.

Theorem \ref{thm:main} will follow from Theorems \ref{thm:n-1_odd}, \ref{thm:n-1_even}, \ref{thm:even_n-2} and \ref{thm:odd_n-2} with no effort.

\begin{table}[thp]
\renewcommand\arraystretch{2.0}
\centering
\begin{tabular}{|c|c| c | c|}
\hline
\multicolumn{2}{|c|}{~~~~~\bf{$\mathbf{C(n,n-1;r)}$ with $\mathbf{n\ge 7}$}~~~~~ } & ~~~~\bf{Partitions}~~~~ & ~~~~\bf{Multiplicity}~~~~  \\\hline
\multirow{4}*{$n$ is odd} 
& $r=1$ & $(2,1^{n-2})$ & $n-1$\\ \cline{2-4}
&$2\leq r<\frac{n}{2}$ & $(2,1^{n-2})$   & $(n-1)(r-1)$\\ \cline{2-4}
& $\frac{n}{2}<r\leq n-2$ & $(n-1,1)$  & $(n-1)(n-r-1)$\\ \hline
\multirow{3}*{$n$ is even} 
& \multirow{2}*{$r=1,2$} & $(n-1,1),~(2,1^{n-2})$ & \multirow{2}*{Unknown} \\ 
&   & ($(n-2,1^2)$, $(3,1^{n-3})$ ) & \\ \cline{2-4}
& $3\leq r\leq n-2$ & $(n-1,1),~(2,1^{n-2})$  & $2(n-1)(n-r-1)$ \\ \hline
\end{tabular}
\caption{Summary of Theorems \ref{thm:n-1_odd} and \ref{thm:n-1_even}.  The partitions enclosed in parentheses are the potential candidates for attaining the strictly second largest eigenvalue. }
\label{tab:main}
\end{table}

\section{Preliminaries} 
\label{sec:preliminary}

The representation theory of finite groups plays a critical role in studying eigenvalues of Cayley graphs, as will be seen shortly in Propositions \ref{prop1.1} and \ref{prop1.2}. For basic concepts and properties of representations and characters of groups, the reader is referred to \cite{I,JL,S1,S}. In what follows, for any finite group $G$ we use 
$$
\widehat{G}=\{\rho_1,\rho_2,\ldots,\rho_k\}
$$ 
to denote a complete set of inequivalent (complex) irreducible matrix representations of $G$, with the assumption that $\rho_1$ is the trivial representation. For any $\rho_i \in \widehat{G}$, the map 
$$
\chi_i: g \mapsto \mathrm{Trace}(\rho_i(g)),\;\, g \in G
$$ 
is the \emph{character} of $\rho_i$, and the ratio 
$$
\tilde{\chi}_i(g):=\frac{\chi_i(g)}{\chi_i(\mathbf{1})}
$$ 
is known as the \emph{normalized character} of $\rho_i$ on $g\in G$, where $\chi_i(\mathbf{1})$ equals the dimension $\dim\rho_i$ of $\rho_i$. Note that $\dim \rho_1 = 1$ for the trivial representation $\rho_1$.

It is known \cite{MS} that the adjacency matrix of $\mathrm{Cay}(G, S)$ equals $\sum_{s\in S} R_{\mathrm{reg}}(s)$, where $\mathrm{reg}$ is the right regular representation of $G$ and $R_{\mathrm{reg}}(s)$ is the permutation matrix depicting the multiplication on $G$ from the right by $s$. The regular representation of $G$ decomposes as a direct sum of all irreducible representations of $G$, each appearing with multiplicity identical to its dimension. Therefore, we have the following proposition, where
$$
\rho_i(S) := \sum_{s \in S} \rho_i(s)
$$
and $\oplus$ denotes the direct sum of matrices. 

\begin{pr}{\rm \cite[Proposition 7.1]{MS}}\label{prop1.1}
The adjacency matrix of $\mathrm{Cay}(G,S)$ is similar to 
\begin{equation*}
	d_1 \rho_1(S)\oplus d_2\rho_2(S)\oplus \cdots\oplus d_k\rho_k(S),
\end{equation*}
where $d_i$ is the dimension of $\rho_i\in \widehat{G}$ and $$d_i\rho_i(S)=\underbrace{\rho_i(S)\oplus\rho_i(S)\oplus \cdots \oplus \rho_i(S)}_{d_i}.$$
\end{pr}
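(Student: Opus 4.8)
The plan is to leverage the two facts recalled just before the statement: that the adjacency matrix of $\mathrm{Cay}(G,S)$ coincides with $\sum_{s \in S} R_{\mathrm{reg}}(s)$, and that the right regular representation $\mathrm{reg}$ of $G$ decomposes, as a complex representation, into a direct sum of the irreducible representations $\rho_i \in \widehat{G}$, each occurring with multiplicity equal to its dimension $d_i$. The entire argument reduces to the observation that a representation and its irreducible decomposition are related by one fixed change of basis, and that conjugation by a fixed matrix is linear, hence compatible with the summation over $s \in S$.

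Concretely, first I would invoke the classical decomposition of the regular representation (a consequence of Maschke's theorem together with the Wedderburn structure of the group algebra $\mathbb{C}G$): there exists an invertible matrix $P$, independent of the group element, such that
\[
P^{-1} R_{\mathrm{reg}}(g)\, P = \bigoplus_{i=1}^{k} d_i\, \rho_i(g) \qquad \text{for every } g \in G,
\]
where $d_i\,\rho_i(g)$ denotes the block-diagonal matrix consisting of $d_i$ copies of $\rho_i(g)$. This is exactly the statement that $\mathrm{reg} \cong \bigoplus_{i=1}^{k} d_i \rho_i$ as representations of $G$.

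The crucial point, and the only one requiring any care, is that $P$ is a single matrix valid simultaneously for all $g \in G$. Summing the displayed identity over $s \in S$ and using that conjugation by $P$ is linear, I would obtain
\[
P^{-1}\!\left(\sum_{s \in S} R_{\mathrm{reg}}(s)\right)\! P = \sum_{s \in S} \bigoplus_{i=1}^{k} d_i\, \rho_i(s) = \bigoplus_{i=1}^{k} d_i\, \rho_i(S),
\]
where the last equality uses the definition $\rho_i(S) = \sum_{s \in S} \rho_i(s)$ together with the fact that entrywise addition of matrices respects the block-diagonal structure. Since the matrix inside the parentheses on the left is precisely the adjacency matrix of $\mathrm{Cay}(G,S)$, this exhibits the asserted similarity.

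There is essentially no serious obstacle here: the substantive content is the decomposition of the regular representation, which is classical, while the remainder is the routine observation that one simultaneous change of basis conjugates the sum $\sum_{s \in S} R_{\mathrm{reg}}(s)$ block by block. The only subtleties worth flagging are the necessity of working over $\mathbb{C}$, so that Maschke's theorem and the splitting into irreducible summands apply, and the use of the same matrix $P$ for every group element, without which the passage from the individual terms $R_{\mathrm{reg}}(s)$ to their sum would fail.
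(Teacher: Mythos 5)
Your proof is correct and follows exactly the route the paper takes: the paper does not prove this cited proposition in detail, but justifies it in the two sentences preceding it by the same two facts you use, namely that the adjacency matrix equals $\sum_{s\in S} R_{\mathrm{reg}}(s)$ and that the regular representation decomposes into irreducibles with multiplicities equal to their dimensions. Your write-up merely makes explicit the (routine but worth noting) point that a single change-of-basis matrix $P$ works simultaneously for all $g\in G$, so conjugation commutes with the sum over $s\in S$.
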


This implies that the multiset of eigenvalues of $\mathrm{Cay}(G,S)$ is the union of $d_i$ multisets of eigenvalues of $\rho_i(S)$ for $1\leq i\leq k$. If a fixed number $\lambda$ is an eigenvalue of each $\rho_i(S)$ with multiplicity $m_i$, which could be $0$, then as an eigenvalue of $\mathrm{Cay}(G,S)$ its multiplicity equals $\sum_{i=1}^k d_i\cdot m_i$. 
In the case that $\mathrm{Cay}(G,S)$ is normal, by Schur's Lemma, all $\rho_i(S)$'s are scalar matrices (see \cite[Lemma 5]{DS}) and the eigenvalues of $\mathrm{Cay}(G,S)$ can be expressed in terms of the irreducible characters of $G$ in the following way. 

\begin{pr}[\cite{DS,Z}]\label{prop1.2}
Let $\{\chi_1,\chi_2,\ldots,\chi_k\}$ be a complete set of inequivalent irreducible characters of $G$. Then the eigenvalues of any normal Cayley graph $\mathrm{Cay}(G, S)$ on $G$ are given by 
\begin{equation*}
	\lambda_j=\frac{1}{\chi_j(\mathbf{1})}\sum_{s\in S}\chi_j(s)=\sum_{s\in S} \tilde{\chi}_j(s),\quad j=1,2,\ldots,k.
\end{equation*}
Moreover, the multiplicity of $\lambda_j$ is equal to $\sum_{1\leq i\leq k,~\lambda_i=\lambda_j}\chi_i(\mathbf{1})^2.$
\end{pr}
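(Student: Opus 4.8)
The plan is to combine Proposition~\ref{prop1.1} with Schur's Lemma. By Proposition~\ref{prop1.1}, the adjacency matrix of $\mathrm{Cay}(G,S)$ is similar to the block-diagonal matrix $\bigoplus_{i=1}^{k}d_i\rho_i(S)$, so it suffices to determine the spectrum of each $\rho_i(S)=\sum_{s\in S}\rho_i(s)$ and then assemble the multiplicities. The entire argument hinges on the hypothesis that $\mathrm{Cay}(G,S)$ is normal, i.e.\ that $S$ is closed under conjugation.

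First I would show that each $\rho_i(S)$ is a scalar matrix. Fix $\rho_i\in\widehat{G}$ and an arbitrary $g\in G$. Since $\rho_i$ is a homomorphism,
\begin{equation*}
\rho_i(g)^{-1}\rho_i(S)\rho_i(g)=\sum_{s\in S}\rho_i(g^{-1}sg)=\sum_{s'\in g^{-1}Sg}\rho_i(s')=\rho_i(S),
\end{equation*}
where the final equality uses $g^{-1}Sg=S$, which is precisely the normality of $S$. Hence $\rho_i(S)$ commutes with $\rho_i(g)$ for every $g\in G$, and as $\rho_i$ is irreducible, Schur's Lemma forces $\rho_i(S)=c_i\,I_{d_i}$ for some scalar $c_i\in\mathbb{C}$; this is the fact recalled just before the proposition (see \cite[Lemma 5]{DS}). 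I would then pin down $c_i$ by taking traces: on one hand $\mathrm{Trace}(\rho_i(S))=\sum_{s\in S}\mathrm{Trace}(\rho_i(s))=\sum_{s\in S}\chi_i(s)$, while on the other hand $\mathrm{Trace}(c_i I_{d_i})=c_i\,\chi_i(\mathbf{1})$, so that
\begin{equation*}
c_i=\frac{1}{\chi_i(\mathbf{1})}\sum_{s\in S}\chi_i(s)=\sum_{s\in S}\tilde{\chi}_i(s)=\lambda_i,
\end{equation*}
which is exactly the claimed eigenvalue formula.

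It remains to count multiplicities. Because $\rho_i(S)=\lambda_i I_{d_i}$ is scalar, the block $d_i\rho_i(S)$ in Proposition~\ref{prop1.1} equals $\lambda_i I_{d_i^2}$, and therefore contributes the eigenvalue $\lambda_i$ with multiplicity $d_i^2=\chi_i(\mathbf{1})^2$. Summing over all indices $i$ with $\lambda_i=\lambda_j$ yields the stated multiplicity $\sum_{1\leq i\leq k,\,\lambda_i=\lambda_j}\chi_i(\mathbf{1})^2$. I do not anticipate any real obstacle here: the only substantive point is recognizing that conjugation-invariance of $S$ is exactly what makes $\rho_i(S)$ central, hence scalar, after which the trace computation and the multiplicity bookkeeping are routine. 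The content of the proposition is thus essentially a clean application of Proposition~\ref{prop1.1} and Schur's Lemma.
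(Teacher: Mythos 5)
Your proposal is correct and follows exactly the route the paper indicates: Proposition~\ref{prop1.1} gives the block decomposition, normality of $S$ makes each $\rho_i(S)$ commute with the irreducible representation so Schur's Lemma forces it to be scalar (the fact the paper recalls from \cite[Lemma 5]{DS}), and the trace computation plus the $d_i^2$ multiplicity count complete the argument. The paper cites this result from \cite{DS,Z} rather than proving it in full, but your write-up is precisely the standard proof it sketches.
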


We say that the strictly second largest eigenvalue of $\mathrm{Cay}(G,S)$ is \emph{attained} or \emph{achieved} by $\rho_i \in \widehat{G}$ if
$$\alpha_2(\mathrm{Cay}(G,S)) \in \mathrm{Spec}(\rho_i(S));$$
when $\mathrm{Cay}(G,S)$ is normal, this is equivalent to 
$$
\alpha_2(\mathrm{Cay}(G,S))=\sum_{s\in S} \tilde{\chi}_i(s).
$$
Let $c=[G:\langle S\rangle ]$ be the index of the subgroup $\langle S\rangle$ in $G$. Then $\mathrm{Cay}(G, S)$
is the union of $c$ copies of the connected Cayley graph $\mathrm{Cay}(\langle S\rangle, S)$ with degree $|S|$. Thus the largest eigenvalue $|S|$ of $\mathrm{Cay}(G, S)$ has multiplicity $c$, and the strictly second largest eigenvalue $\alpha_2(\mathrm{Cay}(G,S))$ just equals $\lambda_{c+1}(\mathrm{Cay}(G,S))$.

A \emph{partition} of a positive integer $n$ is a sequence of positive integers $\gamma=(\gamma_1,\gamma_2,\ldots,\gamma_m)$ satisfying $\gamma_1\ge \gamma_2\ge\cdots\ge\gamma_m$ and $n=\gamma_1+\gamma_2+\cdots+\gamma_m$.  We use $\gamma \vdash n$ to indicate that $\gamma$ is a partition of $n$. We say $\gamma$ is a \emph{hook} if $\gamma=(n-m,1^m)$ with $0\leq m\leq n-1$, and $\gamma=(n-m,2,1^{m-2})$ with $2\leq m\leq n-2$ is called a \emph{near hook}. 
 Recall from the representation theory of symmetric groups \cite{S} that for each partition of $n$, we can construct an irreducible module of $S_n$ known as the Specht module. It is well known that all the Specht modules corresponding to partitions of $n$  form a complete list  $\widehat{S_n}$ of inequivalent irreducible modules of $S_n$. For any subset $H$ of $S_n$, define
$$
H^{+}=\sum_{h\in H} h \in \mathbb{C}S_n.
$$

\begin{lem}{\rm \cite[Lemma 6.3]{SZ}}\label{lem:symmetric}
 	Let $G=S_n$ and let $L$ be an irreducible $G$-module. Then there is a basis of $L$ such that the matrices of $g+g^{-1}$ on $L$ are symmetric for any $g \in G$. Consequently, if $H \subset G$ is a subset such that $H=H^{-1}$, then the matrix of $H^{+}$ on $L$ is symmetric.
 \end{lem}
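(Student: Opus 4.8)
The plan is to exploit the well-known fact that the irreducible representations of $S_n$ are of \emph{real type}: each Specht module is defined over $\mathbb{Q}$ (indeed over $\mathbb{Z}$), so the complex module $L$ admits an $S_n$-stable real form $V$ with $V\otimes_{\mathbb{R}}\mathbb{C}\cong L$. Equivalently, one may invoke the Frobenius--Schur indicator, which equals $+1$ for every irreducible character of $S_n$ because each permutation is conjugate to its inverse. Either way I would first fix a real representation $\rho\colon S_n\to \mathrm{GL}(V)$ realizing $L$.

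Next I would manufacture an $S_n$-invariant Euclidean structure on $V$ by averaging. Starting from any positive-definite symmetric bilinear form $B$ on $V$, set $\langle u,v\rangle=\sum_{g\in S_n}B(\rho(g)u,\rho(g)v)$; this is again symmetric and positive definite, and it is $S_n$-invariant, i.e.\ $\langle \rho(h)u,\rho(h)v\rangle=\langle u,v\rangle$ for all $h$. Choosing an orthonormal basis of $V$ for $\langle\cdot,\cdot\rangle$ turns each $\rho(g)$ into a real orthogonal matrix, so that $\rho(g)^{\top}=\rho(g)^{-1}=\rho(g^{-1})$. This orthonormal basis of $V$ is simultaneously a basis of $L$ over $\mathbb{C}$, and it is the basis the lemma asks for.

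With this basis in hand the two claims become transparent. For the first, the matrix of $g+g^{-1}$ on $L$ is $\rho(g)+\rho(g^{-1})=\rho(g)+\rho(g)^{\top}$, which is manifestly symmetric. For the consequence, suppose $H=H^{-1}$. As $h$ runs over $H$ so does $h^{-1}$, whence
$$\Big(\sum_{h\in H}\rho(h)\Big)^{\!\top}=\sum_{h\in H}\rho(h)^{\top}=\sum_{h\in H}\rho(h^{-1})=\sum_{h\in H}\rho(h),$$
so the matrix of $H^{+}$ on $L$ coincides with its own transpose and is therefore symmetric.

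I expect the only genuine content --- and hence the main obstacle --- to be the very first step: justifying that $L$ can be realized over $\mathbb{R}$. For a general finite group this can fail (characters need not be real, and even real characters need not be of real type), so one cannot argue by pure averaging alone: averaging a Hermitian form only yields unitarity, $\rho(g)^{\ast}=\rho(g^{-1})$, which gives Hermitian rather than symmetric matrices. The symmetry therefore rests essentially on a feature special to $S_n$, namely the rationality of its irreducible representations (or, equivalently, the vanishing of the obstruction recorded by the Frobenius--Schur indicator). Once realness is granted, everything else is a short orthogonality computation.
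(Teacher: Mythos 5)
Your proof is correct, and it is essentially the canonical argument for this statement: the paper does not prove the lemma itself but quotes it from \cite{SZ}, where the proof rests on exactly the ingredients you use --- every irreducible module of $S_n$ is realizable over $\mathbb{R}$ (rationality of Specht modules, equivalently Frobenius--Schur indicator $+1$), averaging then yields an invariant inner product, and in an orthonormal basis each $\rho(g)$ is orthogonal so that $\rho(g)^{\top}=\rho(g^{-1})$, making $\rho(g)+\rho(g^{-1})$ and, for $H=H^{-1}$, the matrix of $H^{+}$ symmetric. You also correctly identify the one point of genuine content, namely that plain unitary averaging over $\mathbb{C}$ would only give Hermitian matrices, so the real (indeed rational) type of $S_n$-representations is exactly what makes the symmetry claim work.
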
 

For $\zeta\vdash n$, we use $S^\zeta$ to denote the corresponding Specht module of $S_n$ and $\rho_\zeta$ the matrix representation of $S^\zeta$, with dimension $d_\zeta$, under the basis such that $\rho_\zeta(H)$ is symmetric and thus has real eigenvalues whenever $H\subset S_n$ is closed under inverse. The existence of this basis is guaranteed by Lemma~\ref{lem:symmetric}. The representations $\rho_{(n)}$ and $\rho_{(1^n)}$ are just the \emph{trivial} and the \emph{sign representations} of $S_n$, respectively. The \emph{standard representation} of $S_n$ refers to $\rho_{(n-1,1)}$. Sometimes we say $\zeta$, instead of $\rho_\zeta$, attains the strictly second largest eigenvalue of  some Cayley graph on $S_n$.  Let $\chi_\zeta(\cdot)$ and $\tilde{\chi}_\zeta(\cdot)$ denote the character and the normalized character of $\rho_\zeta$, respectively.  We have $\chi_{(n)}(\sigma)=\tilde{\chi}_{(n)}(\sigma)= 1$ and $\chi_{(1^n)}(\sigma)=\tilde{\chi}_{(1^n)}(\sigma)=\mathrm{sgn}(\sigma)$ for any $\sigma\in S_n$.

 A \emph{Young~diagram} is a finite collection of boxes arranged in left-justified rows, with the row sizes weakly decreasing. The Young diagram associated to the partition $\zeta=(\zeta_1,\zeta_2,\ldots,\zeta_m)$ is the one that has $m$ rows and $\zeta_i$ boxes on the $i$-th row. The notation $\zeta^-$ denotes any partition of $n-1$ whose Young diagram is obtained by removing one box from that of $\zeta$. Denote by $a=(i, j)$ the box in the $i$-th row and $j$-th column of the Young diagram of $\zeta$. Then it has \emph{hook}
$$ 
H_{a}=H_{i, j}=\left\{\left(i, j^{\prime}\right)\in \zeta: j^{\prime} \geq j\right\} \cup\left\{\left(i^{\prime}, j\right)\in \zeta: i^{\prime} \geq i\right\}
$$
with corresponding \emph{hook-length}
$$
h_{a}=h_{i, j}=\left|H_{i, j}\right|.
$$
To illustrate, if $\zeta=\left(4^{2}, 3^{3}, 1\right)$, then the dotted boxes in
\begin{table}[H]
\centering
\begin{tabular}{|l|lll}
\hline
$\ $ & \multicolumn{1}{l|}{} & \multicolumn{1}{l|}{} & \multicolumn{1}{l|}{} \\ \hline
 & \multicolumn{1}{l|}{$\bullet$} & \multicolumn{1}{l|}{$\bullet$} & \multicolumn{1}{l|}{$\bullet$} \\ \hline
 & \multicolumn{1}{l|}{$\bullet$} & \multicolumn{1}{l|}{} &                       \\ \cline{1-3}
 & \multicolumn{1}{l|}{$\bullet$} & \multicolumn{1}{l|}{} &                       \\ \cline{1-3}
 & \multicolumn{1}{l|}{$\bullet$} & \multicolumn{1}{l|}{} &                       \\ \cline{1-3}
 &                       &                       &                       \\ \cline{1-1}
\end{tabular}
\end{table} 
\noindent  are the hook $H_{2,2}$ with hook-length $h_{2,2}=6$. The following lemma states the well-known Hook Formula for the dimension $\chi_\zeta(\mathbf{1})$ of any $\rho_\zeta\in \widehat{S_n}$, where $\mathbf{1}$ is the identity element of $S_n$.

\begin{lem}{\upshape \cite[Theorem 3.10.2]{S} }\label{lem:Hook} If $\zeta\vdash n$, then
	\[\chi_\zeta(\mathbf{1})=\frac{n !}{\prod_{(i, j) \in \zeta} h_{i, j}}.\]

\end{lem}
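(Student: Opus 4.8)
The plan is to reduce the identity to a statement about standard Young tableaux and then prove it by induction via the hook-walk argument of Greene--Nijenhuis--Wilf. First I would invoke the standard basis theorem for Specht modules: a basis of $S^\zeta$ is indexed by the standard Young tableaux of shape $\zeta$, so that $\chi_\zeta(\mathbf{1})=d_\zeta=f^\zeta$, where $f^\zeta$ denotes the number of such tableaux. It therefore suffices to prove $f^\zeta = n!/\prod_{(i,j)\in\zeta}h_{i,j}$. Next I would record the branching recurrence: in any standard Young tableau of shape $\zeta$ the largest entry $n$ occupies a removable corner, and erasing it gives a standard Young tableau of shape $\zeta^-$; reading this backwards yields
\[
f^\zeta=\sum_{\zeta^-}f^{\zeta^-},
\]
the sum taken over the partitions $\zeta^-$ of $n-1$ obtained by deleting a removable corner of $\zeta$. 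The case $n=1$ is trivial.

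Writing $F_\zeta:=n!/\prod_{(i,j)\in\zeta}h_{i,j}$, the whole problem reduces to showing that $F_\zeta$ satisfies the same base case and the same recurrence, that is,
\[
\sum_{\zeta^-}\frac{F_{\zeta^-}}{F_\zeta}=1.
\]
To interpret the ratios I would introduce the hook walk on the Young diagram of $\zeta$: pick a starting box uniformly at random, and then repeatedly jump from the current box to a uniformly random box among the remaining boxes of its hook $H_{i,j}$, halting once a corner is reached. The key claim is that, for each removable corner $c$, the probability that the walk ends at $c$ is exactly $F_{\zeta^-}/F_\zeta$ with $\zeta^-=\zeta\setminus\{c\}$. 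Since the walk always terminates at some corner, these probabilities sum to $1$, which is precisely the desired recurrence for $F_\zeta$.

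The hard part is the key claim, which carries the actual computation. I would fix the corner $c=(s,t)$ and first analyse the walk started from a fixed box $(a,b)$ in the rectangle $[1,s]\times[1,t]$: conditioning on the set of rows and the set of columns that the walk visits, the conditional transition probabilities factor across the two coordinates, and summing the resulting products over all admissible intermediate rows and columns telescopes. Summing over all starting boxes $(a,b)$ then gives
\[
\Pr[\text{walk ends at }c]=\frac{1}{n}\prod_{i<s}\frac{h_{i,t}}{h_{i,t}-1}\prod_{j<t}\frac{h_{s,j}}{h_{s,j}-1}.
\]
On the other side, deleting the corner $(s,t)$ leaves every hook-length unchanged except those of the boxes strictly above $c$ in column $t$ and strictly to the left of $c$ in row $s$, each of which drops by exactly one; since $h_{s,t}=1$, this gives
\[
\frac{F_{\zeta^-}}{F_\zeta}=\frac{1}{n}\cdot\frac{\prod_{(i,j)\in\zeta}h_{i,j}}{\prod_{(i,j)\in\zeta^-}h_{i,j}}=\frac{1}{n}\prod_{i<s}\frac{h_{i,t}}{h_{i,t}-1}\prod_{j<t}\frac{h_{s,j}}{h_{s,j}-1}.
\]
The two expressions agree, which proves the key claim and closes the induction.

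An entirely algebraic route would instead start from the Frobenius determinantal formula $f^\zeta=n!\,\prod_{i<j}(\ell_i-\ell_j)\big/\prod_i\ell_i!$, where $\ell_i=\zeta_i+m-i$ is the first-column hook-length $h_{i,1}$, and combine it with the row-wise set identity $\{h_{i,j}:1\le j\le\zeta_i\}=\{1,\dots,\ell_i\}\setminus\{\ell_i-\ell_j:j>i\}$; the latter yields $\prod_j h_{i,j}=\ell_i!\big/\prod_{j>i}(\ell_i-\ell_j)$, and multiplying over $i$ recovers the formula. In that approach the obstacle is merely to verify this set identity.
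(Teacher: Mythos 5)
This lemma has no proof in the paper at all: it is the classical Frame--Robinson--Thrall hook length formula, imported by citation to Sagan \cite[Theorem 3.10.2]{S} and used downstream only through its numerical consequences (dimensions such as $d_{(n-1,1)}=n-1$ and the dimension column of Table 2). So there is no internal argument to compare against; what can be judged is whether your proposal is a sound proof of the cited result, and it is. Your main route --- identify $\chi_\zeta(\mathbf{1})$ with the number $f^\zeta$ of standard Young tableaux, use the branching recurrence $f^\zeta=\sum_{\zeta^-}f^{\zeta^-}$, and show $F_\zeta=n!/\prod_{(i,j)\in\zeta}h_{i,j}$ satisfies the same recurrence via the hook walk --- is exactly the Greene--Nijenhuis--Wilf argument, which happens to be the proof presented in the very source the paper cites. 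Your two key computations are right: deleting a corner $(s,t)$ lowers by $1$ precisely the hooks in row $s$ and column $t$ above/left of it (and removes the box of hook length $1$), giving $F_{\zeta^-}/F_\zeta=\frac{1}{n}\prod_{i<s}\frac{h_{i,t}}{h_{i,t}-1}\prod_{j<t}\frac{h_{s,j}}{h_{s,j}-1}$, and this matches the hook-walk terminal probability, so the probabilities summing to $1$ closes the induction. The one spot where you sketch rather than prove is that terminal-probability formula itself: the conditioning on visited rows and columns, the factorization, and the telescoping sum are the technical core of Greene--Nijenhuis--Wilf and would have to be written out in full for a self-contained proof, though your description of that computation is faithful. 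Your alternative algebraic route (Frobenius' determinantal dimension formula combined with the row-wise identity $\{h_{i,j}:1\le j\le\zeta_i\}=\{1,\dots,\ell_i\}\setminus\{\ell_i-\ell_j:j>i\}$, $\ell_i=h_{i,1}$) is also a correct classical derivation --- essentially the original Frame--Robinson--Thrall proof --- and is shorter if one grants the Frobenius formula as known. For the purposes of this paper, simply citing the result, as the authors do, is the appropriate choice; your work shows the citation is legitimate rather than replacing anything in the paper.
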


Branching Rule describes how irreducible representations of $S_n$ decompose into irreducible representations of $S_{n-1}$.

\begin{thm}{\rm(Branching Rule, \cite[Theorem 2.8.3]{S})}
  	If $\zeta \vdash n$, then  $S^\zeta \downarrow_{S_{n-1}} \cong \bigoplus_{\zeta^{-}} S^{\zeta^{-}}. $
  	
  \end{thm}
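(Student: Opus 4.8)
The plan is to avoid attacking the restriction $S^\zeta\downarrow_{S_{n-1}}$ directly and instead transfer the problem to the \emph{induced} representations, which are easier to compute, via Frobenius reciprocity. Concretely, for any $\eta\vdash n-1$ the inner product of characters satisfies
\[
\bigl\langle \chi_\zeta\!\downarrow_{S_{n-1}},\,\chi_\eta\bigr\rangle_{S_{n-1}}
=\bigl\langle \chi_\zeta,\,\chi_\eta\!\uparrow^{S_n}\bigr\rangle_{S_n},
\]
so it suffices to identify the irreducible constituents of $\mathrm{Ind}_{S_{n-1}}^{S_n}S^\eta$ and show that $S^\zeta$ occurs there with multiplicity $1$ exactly when $\eta=\zeta^-$ for some removable box, and with multiplicity $0$ otherwise. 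Since the multiplicities in $S^\zeta\downarrow_{S_{n-1}}$ are then all $0$ or $1$, this will pin down the restriction completely.

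First I would compute the induction using the Frobenius characteristic map $\mathrm{ch}$, the isometric isomorphism from virtual characters of symmetric groups to symmetric functions that sends $\chi_\lambda$ to the Schur function $s_\lambda$ and turns the induction product into ordinary multiplication. Because $S_{n-1}\cong S_{n-1}\times S_1$ inside $S_n$ and the trivial character of $S_1$ has characteristic $s_{(1)}$, we obtain $\mathrm{ch}\bigl(\mathrm{Ind}_{S_{n-1}}^{S_n}S^\eta\bigr)=s_\eta\, s_{(1)}$. Now Pieri's rule for multiplication by $s_{(1)}$ gives $s_\eta\,s_{(1)}=\sum_{\zeta}s_\zeta$, the sum being over all $\zeta\vdash n$ whose Young diagram is obtained from that of $\eta$ by adding a single box. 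Applying $\mathrm{ch}^{-1}$ yields $\mathrm{Ind}_{S_{n-1}}^{S_n}S^\eta\cong\bigoplus_{\zeta=\eta+\square}S^\zeta$, each summand with multiplicity one. Feeding this back through Frobenius reciprocity shows that $S^\eta$ appears in $S^\zeta\downarrow_{S_{n-1}}$ with multiplicity $1$ precisely when $\eta$ is one of the $\zeta^-$ and with multiplicity $0$ otherwise, which is exactly the Branching Rule. As a consistency check one may verify $d_\zeta=\sum_{\zeta^-}d_{\zeta^-}$ directly from the combinatorial model below, confirming that no constituents are missed.

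The main work sits entirely in the symmetric-function machinery, namely the properties of $\mathrm{ch}$ and Pieri's rule; if one wanted a self-contained argument avoiding this, the cleanest route is the combinatorial one using Young's seminormal (or orthogonal) form. There $S^\zeta$ has a basis indexed by the standard Young tableaux of shape $\zeta$, and in this basis each generator of $S_{n-1}$ acts only by permuting the entries $1,\dots,n-1$, hence fixes the cell occupied by the largest entry $n$. Since $n$ must lie in a removable corner of $\zeta$, grouping the tableaux according to which corner contains $n$ partitions the basis into $S_{n-1}$-invariant blocks, the block for a corner $c$ being spanned by the tableaux that restrict to standard tableaux of the shape $\zeta^-$ obtained by deleting $c$; identifying each block with the corresponding $S^{\zeta^-}$ gives the decomposition directly. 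The only genuine obstacle in this second route is justifying that each block is not merely invariant but actually isomorphic to $S^{\zeta^-}$ as an $S_{n-1}$-module, which one settles by checking that the seminormal matrix entries for transpositions in $S_{n-1}$ depend only on the axial distances within the first $n-1$ cells and therefore coincide with those of the seminormal form on $S^{\zeta^-}$.
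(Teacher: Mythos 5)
The paper does not prove this statement at all: it is quoted as a known classical result from Sagan's book \cite[Theorem 2.8.3]{S}, so there is no internal proof to compare against; your proposal should be judged on its own terms, and it is correct. Your first route (Frobenius reciprocity plus the characteristic map and Pieri's rule, giving $\mathrm{Ind}_{S_{n-1}}^{S_n}S^\eta\cong\bigoplus_{\zeta=\eta+\square}S^\zeta$ and hence the restriction by adjointness) is the standard symmetric-function proof; it is complete provided the facts $\mathrm{ch}(\chi_\eta)=s_\eta$ and Pieri's rule are established independently of the branching rule, which they are in the usual developments (via Jacobi--Trudi or Kostka unitriangularity), so there is no circularity. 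Your second route is essentially the proof Sagan himself gives, with one difference: Sagan works with standard polytabloids and produces a filtration $0=V^{(0)}\subset V^{(1)}\subset\cdots\subset V^{(k)}=S^\zeta$ of $S_{n-1}$-submodules, indexed by the removable corners of $\zeta$, whose successive quotients are the $S^{\zeta^-}$, and then invokes semisimplicity in characteristic zero to split the filtration into a direct sum; your seminormal-form version instead yields an honest block-diagonal decomposition in one step, at the price of assuming the nontrivial existence and explicit matrix entries of Young's seminormal representation, which you correctly flag as the real obstacle there. Both buy-ins are legitimate: the Frobenius--Pieri route generalizes immediately to restriction and induction along larger Young subgroups, while the tableau route gives the finer information that the decomposition is realized by a natural basis, which is the form most useful for the eigenvalue computations in this paper.
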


The \emph{conjugate} or \emph{transpose} of a partition $\zeta=(\zeta_1,\zeta_2,\ldots,\zeta_m)\vdash n$ is defined as $\zeta'=(\zeta_1',~\zeta_2',~\ldots,\zeta_h') \vdash n$, where $\zeta_i'$ is the length of the $i$-th column of $\zeta$. In other words, the Young diagram of $\zeta'$ is just the transpose of that of $\zeta$. 
The relation between $\rho_\zeta(\cdot)$ and $\rho_{\zeta'}(\cdot)$ is reflected in the following lemma.

\begin{lem}{\upshape (\cite[2.1.8]{JK}) }\label{lem:transpose}
  For any $\zeta \vdash n$, we have 
  \begin{equation*}
	\rho_{\zeta'}(\cdot)=\mathrm{sgn}(\cdot)\rho_\zeta(\cdot).
  \end{equation*} 
\end{lem}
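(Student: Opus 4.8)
The plan is to prove the stronger module statement $S^{\zeta'} \cong S^\zeta \otimes \mathrm{sgn}$, where $\mathrm{sgn}$ denotes the sign representation, and then to upgrade it to the claimed matrix identity by matching bases. Concretely, I would first establish the character identity $\chi_{\zeta'}(g) = \mathrm{sgn}(g)\,\chi_\zeta(g)$ for all $g \in S_n$, next exhibit an explicit irreducible realization whose character is $\chi_{\zeta'}$, and finally verify that this realization respects the symmetric-basis normalization used after Lemma~\ref{lem:symmetric}.

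For the character identity I would use symmetric functions. Write $\mu=(\mu_1,\ldots,\mu_\ell)$ for the cycle type of $g$, so that $\mathrm{sgn}(g)=\prod_i(-1)^{\mu_i-1}=(-1)^{n-\ell}$. The Frobenius character formula gives $p_\mu=\sum_{\zeta\vdash n}\chi_\zeta(g)\,s_\zeta$, with $p_\mu$ the power-sum and $s_\zeta$ the Schur function. Applying the fundamental involution $\omega$ on the ring of symmetric functions, which satisfies $\omega(s_\zeta)=s_{\zeta'}$ and $\omega(p_k)=(-1)^{k-1}p_k$, yields on one hand $\omega(p_\mu)=(-1)^{n-\ell}p_\mu=\mathrm{sgn}(g)\sum_\eta\chi_\eta(g)\,s_\eta$, and on the other hand $\omega(p_\mu)=\sum_\zeta\chi_\zeta(g)\,s_{\zeta'}=\sum_\eta\chi_{\eta'}(g)\,s_\eta$ after reindexing $\eta=\zeta'$. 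Since the $s_\eta$ are linearly independent, comparing coefficients gives $\chi_{\eta'}(g)=\mathrm{sgn}(g)\,\chi_\eta(g)$ for every partition $\eta$ and every $g$. As a consistency check, $d_{\zeta'}=d_\zeta$ follows at once from the Hook Formula of Lemma~\ref{lem:Hook}, since transposing the Young diagram transposes each hook and so leaves $\prod_{(i,j)}h_{i,j}$ unchanged.

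With the character identity in hand I would define the matrix representation $\sigma$ by $\sigma(g):=\mathrm{sgn}(g)\,\rho_\zeta(g)$. Since $\mathrm{sgn}$ is a homomorphism into $\{\pm 1\}$, a direct check gives $\sigma(gh)=\sigma(g)\,\sigma(h)$, so $\sigma$ is a representation; its trace is $\mathrm{sgn}(g)\,\chi_\zeta(g)=\chi_{\zeta'}(g)$, whence $\sigma$ is irreducible and equivalent to $\rho_{\zeta'}$. What remains, and where I expect the only real subtlety, is checking that this realization is compatible with the symmetric basis the paper fixes for $\rho_{\zeta'}$. That normalization asserts $\rho_\zeta(g)+\rho_\zeta(g^{-1})$ is symmetric for all $g$; since $\mathrm{sgn}(g^{-1})=\mathrm{sgn}(g)$, we obtain $\sigma(g)+\sigma(g^{-1})=\mathrm{sgn}(g)\bigl[\rho_\zeta(g)+\rho_\zeta(g^{-1})\bigr]$, which is again symmetric. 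Thus $\sigma$ already satisfies the condition imposed on $\rho_{\zeta'}$, and taking the basis of $S^{\zeta'}$ to be the image of the fixed basis of $S^\zeta$ under the isomorphism $S^{\zeta'}\cong S^\zeta\otimes\mathrm{sgn}$ produces the literal identity $\rho_{\zeta'}(\cdot)=\mathrm{sgn}(\cdot)\,\rho_\zeta(\cdot)$.

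The main obstacle is therefore conceptual bookkeeping rather than computation: upgrading an equivalence of irreducible representations to an exact matrix identity in prescribed bases, the crucial point being that the sign twist preserves the symmetry of $\rho(g)+\rho(g^{-1})$. An alternative route that avoids symmetric functions is a direct Specht-module argument: transposing Young tableaux interchanges row and column stabilizers, and tracking the signs in the polytabloids identifies $S^{\zeta'}$ with $S^\zeta\otimes\mathrm{sgn}$ outright; I would fall back on this were a proof independent of the involution $\omega$ preferred.
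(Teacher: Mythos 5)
Your proof is correct, but note that the paper offers no proof of this lemma at all: it is quoted verbatim from \cite[2.1.8]{JK}, whose own argument is essentially the tableau-theoretic one you sketch as a fallback (transposing a tableau interchanges row and column stabilizers, and tracking signs in the polytabloids identifies $S^{\zeta'}$ with $S^\zeta\otimes\mathrm{sgn}$). Your main route is therefore genuinely different and self-contained: you derive the character identity $\chi_{\zeta'}=\mathrm{sgn}\cdot\chi_\zeta$ from the involution $\omega$ on symmetric functions and then realize $\rho_{\zeta'}$ as the sign twist $\sigma(g)=\mathrm{sgn}(g)\rho_\zeta(g)$. What your write-up adds beyond the citation is the point the paper glosses over: the lemma is stated as a literal matrix identity, yet the paper's basis convention (fixed only up to the symmetry requirement of Lemma~\ref{lem:symmetric}) does not by itself pin down a basis for $S^{\zeta'}$. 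Your observation that $\sigma(g)+\sigma(g^{-1})=\mathrm{sgn}(g)\bigl[\rho_\zeta(g)+\rho_\zeta(g^{-1})\bigr]$ is again symmetric shows the twisted realization is admissible under that convention, so the bases can be chosen to make the identity exact rather than merely an equivalence of irreducibles; this is precisely the gap between ``equivalent representations'' and the printed matrix equation. For every use the paper makes of the lemma (comparing the spectra of $\rho_{\zeta'}(H)$ and $\rho_\zeta(H)$), either reading suffices, so your extra care costs nothing and clarifies the statement.
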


The next two lemmas formulate the normalized  characters of $S_n$ on $n$-cycles and $(n-1)$-cycles, respectively, which follow from \cite[Lemma 4.10.3]{S}, \cite[Lemma 4.3]{SZ} and Hook Formula.  

\begin{lem}{\rm \cite[Lemma 4.10.3]{S}}\label{lem:n-cycle}
 	Suppose $\zeta\vdash n$. If $\sigma$ is an $n$-cycle of $S_n$, then
$$
\tilde{\chi}_\zeta(\sigma)= \begin{cases}\frac{(-1)^m n(n-m-1)!m!}{n!} , & \text { if } \zeta=\left(n-m, 1^m\right) \text { with } 0 \leq m \leq n-1 \\ 0, & \text { otherwise. }\end{cases}
$$
\end{lem}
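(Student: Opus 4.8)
The final statement to prove is Lemma~\ref{lem:n-cycle}, giving the normalized character $\tilde{\chi}_\zeta(\sigma)$ of $S_n$ evaluated on an $n$-cycle $\sigma$.

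\medskip

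The plan is to compute $\chi_\zeta(\sigma)$ via the Murnaghan--Nakayama rule and then divide by the dimension $\chi_\zeta(\mathbf{1})$ supplied by the Hook Formula (Lemma~\ref{lem:Hook}). First I would recall that for an $n$-cycle $\sigma$, the cycle type consists of a single part of size $n$, so the Murnaghan--Nakayama rule reduces to a single step: $\chi_\zeta(\sigma)$ is a signed sum over ways to remove a border strip (rim hook) of length $n$ from the Young diagram of $\zeta$, with the complementary diagram required to be empty since there are no smaller cycles left to account for. The key combinatorial observation is that a Young diagram of $n$ boxes admits a border strip of length $n$ (i.e.\ the entire diagram is a single rim hook) if and only if $\zeta$ is a hook $\zeta=(n-m,1^m)$; any partition that is not a hook has a $2\times 2$ block of boxes and hence cannot be covered by a single connected border strip.

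\medskip

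Next, in the hook case $\zeta=(n-m,1^m)$, the single border strip covering the whole diagram has leg length equal to the number of rows minus one, namely $m$, so the Murnaghan--Nakayama sign is $(-1)^m$, giving $\chi_\zeta(\sigma)=(-1)^m$. For all non-hook $\zeta$ the sum is empty and $\chi_\zeta(\sigma)=0$, which already matches the second branch of the claimed formula. It then remains to normalize: I would apply the Hook Formula to the hook shape $(n-m,1^m)$. The hook-lengths of this diagram are easy to enumerate: the corner box $(1,1)$ has hook-length $n$; the remaining boxes in the first row have hook-lengths $n-m-1,\ n-m-2,\ldots,1$ and the remaining boxes in the first column have hook-lengths $m,\ m-1,\ldots,1$. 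Hence $\prod_{(i,j)\in\zeta}h_{i,j}=n\cdot(n-m-1)!\cdot m!$, and Lemma~\ref{lem:Hook} yields $\chi_\zeta(\mathbf{1})=\dfrac{n!}{n\,(n-m-1)!\,m!}$.

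\medskip

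Finally I would assemble the pieces:
$$
\tilde{\chi}_\zeta(\sigma)=\frac{\chi_\zeta(\sigma)}{\chi_\zeta(\mathbf{1})}=\frac{(-1)^m}{\dfrac{n!}{n\,(n-m-1)!\,m!}}=\frac{(-1)^m\,n\,(n-m-1)!\,m!}{n!},
$$
which is exactly the first branch of the stated formula, with the second branch already handled by the vanishing of the character on non-hooks. The main obstacle is not really a difficulty but a matter of invoking the right tool cleanly: one must justify that the Murnaghan--Nakayama sum for a single full-length cycle collapses to the single-border-strip criterion, and correctly read off both the covering condition (hooks only) and the leg-length sign $(-1)^m$. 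Since the excerpt attributes the result to \cite[Lemma 4.10.3]{S}, an alternative and equally valid route is simply to cite that reference for $\chi_\zeta(\sigma)=(-1)^m$ on hooks (and $0$ otherwise) and then perform only the Hook-Formula normalization above; this is likely the shortest path and avoids re-deriving the Murnaghan--Nakayama specialization from scratch.
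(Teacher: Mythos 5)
Your proposal is correct, and it follows essentially the same route as the paper: the paper offers no independent proof of this lemma but obtains it by citing Sagan's Lemma 4.10.3 for the character values $\chi_\zeta(\sigma)=(-1)^m$ on hooks (and $0$ otherwise) and then normalizing with the Hook Formula (Lemma~\ref{lem:Hook}), which is exactly your computation of $\chi_\zeta(\mathbf{1})=\frac{n!}{n\,(n-m-1)!\,m!}$ for $\zeta=(n-m,1^m)$. Your additional Murnaghan--Nakayama derivation of the character values (whole diagram must be a single rim hook, forcing a hook shape, with sign $(-1)^m$ from the leg length) is a correct, standard argument that merely makes the cited ingredient self-contained.
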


\begin{lem}{\upshape \cite[Lemma 4.3]{SZ}} \label{lem:n-1_cycles}
	Suppose $\zeta\vdash n$. If $\sigma$ is an $(n-1)-$cycle of $S_n$, then 
	\[\tilde{\chi}_\zeta(\sigma)=\begin{cases}
	    1,& \text{if}~\zeta=(n);\\
	    (-1)^{n-2},&\text{if}~\zeta=(1^n);\\
		\frac{(-1)^{m-1}(n-1)(n-m)(n-m-2)!m(m-2)!}{n!},& \text{if}~\zeta=(n-m,2,1^{m-2})~\text{with}~2\leq m\leq n-2;\\
		0,& \text{otherwise}.
	\end{cases}\]
\end{lem}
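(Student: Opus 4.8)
The plan is to reduce everything to the known character values on a \emph{full} cycle, via the Branching Rule. Since all $(n-1)$-cycles of $S_n$ are conjugate, I may assume that $\sigma$ fixes the point $n$. Then $\sigma$ lies in the point-stabiliser $S_{n-1}=\mathrm{Stab}(n)$, where it is a full $(n-1)$-cycle. Restricting $S^\zeta$ to $S_{n-1}$ and applying the Branching Rule gives
\[
\chi_\zeta(\sigma)=\sum_{\zeta^-}\chi_{\zeta^-}(\sigma),
\]
the sum running over all $\zeta^-\vdash n-1$ obtained by deleting one corner box from $\zeta$, where each $\chi_{\zeta^-}(\sigma)$ is an $S_{n-1}$-character evaluated at a full $(n-1)$-cycle. (Alternatively one could run the Murnaghan--Nakayama rule directly on the cycle type $(n-1,1)$, but the branching route uses exactly the lemmas already available.)

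Next I would evaluate each summand by applying Lemma~\ref{lem:n-cycle} to $S_{n-1}$. That lemma gives $\tilde{\chi}_{\zeta^-}(\sigma)=0$ unless $\zeta^-$ is a hook; combining the stated normalized value for a hook $(n-1-s,1^s)\vdash n-1$ with its dimension $\binom{n-2}{s}$ (Hook Formula, Lemma~\ref{lem:Hook}), a one-line cancellation yields the clean character value $\chi_{\zeta^-}(\sigma)=(-1)^s$ for such a hook and $0$ otherwise. Hence $\chi_\zeta(\sigma)$ equals the signed count $\sum(-1)^s$ taken over those $\zeta^-$ that happen to be hooks.

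The combinatorial heart is then to classify, for each $\zeta\vdash n$, which children $\zeta^-$ are hooks. The key observation is that adjoining one box to a hook of $n-1$ always produces either a hook or a near hook; consequently $\zeta$ has a hook child precisely when $\zeta$ is itself a hook or a near hook, and for all remaining $\zeta$ one gets $\chi_\zeta(\sigma)=0$, which is the ``otherwise'' branch. For $\zeta=(n)$ and $\zeta=(1^n)$ the unique child is $(n-1)$ and $(1^{n-1})$, of leg $0$ and $n-2$, giving values $1$ and $(-1)^{n-2}$ (both partitions have dimension $1$). For a hook $\zeta=(n-m,1^m)$ with $1\le m\le n-2$ the two corner removals give the hooks $(n-m-1,1^m)$ and $(n-m,1^{m-1})$, whose contributions $(-1)^m$ and $(-1)^{m-1}$ cancel, so $\chi_\zeta(\sigma)=0$. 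For a near hook $\zeta=(n-m,2,1^{m-2})$ with $2\le m\le n-2$, the only removal producing a hook is deletion of the box $(2,2)$, giving the hook $(n-m,1^{m-1})$ of leg $m-1$; all other removals yield near hooks and contribute $0$. Hence $\chi_\zeta(\sigma)=(-1)^{m-1}$.

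Finally, to convert $\chi_\zeta(\sigma)$ into the normalized character in the near-hook case I would compute $\dim S^\zeta$ by the Hook Formula. Reading off the hook lengths of $(n-m,2,1^{m-2})$ gives $h_{1,1}=n-1$, $h_{1,2}=n-m$, $h_{2,1}=m$, $h_{2,2}=1$, together with the two descending runs contributing $(n-m-2)!$ (rest of row $1$) and $(m-2)!$ (rest of column $1$); their product is $(n-1)(n-m)(n-m-2)!\,m\,(m-2)!$, so $\dim S^\zeta=n!/[(n-1)(n-m)(n-m-2)!\,m\,(m-2)!]$. Dividing $(-1)^{m-1}$ by this dimension reproduces the displayed formula, while the dimension-$1$ cases $(n)$ and $(1^n)$ need no normalization. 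I expect the main obstacle to be not any single deep step but the careful corner-by-corner bookkeeping in the hook-child classification --- in particular confirming the exact two-term cancellation for genuine hooks, that precisely one hook child survives for a near hook, and the accurate extraction of the near-hook hook lengths.
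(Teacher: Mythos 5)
Your proposal is correct: every step checks out, including the one-line cancellation $\chi_{(n-1-s,1^s)}(\sigma)=(-1)^s$ for hooks of $n-1$ on a full $(n-1)$-cycle, the two-term cancellation $(-1)^m+(-1)^{m-1}=0$ for a hook $\zeta=(n-m,1^m)$ with $1\le m\le n-2$, the fact that a near hook $(n-m,2,1^{m-2})$ has exactly one hook child $(n-m,1^{m-1})$ (obtained by deleting the box $(2,2)$), the parent-of-a-hook classification that kills all remaining $\zeta$, and the hook-length product $(n-1)(n-m)(n-m-2)!\,m\,(m-2)!$ for the near hook. Note, however, that the paper does not prove this lemma at all: it is quoted verbatim from Siemons--Zalesski (their Lemma 4.3), with only the remark that it ``follows from'' Sagan's $n$-cycle character formula (Lemma~\ref{lem:n-cycle}) and the Hook Formula. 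So your argument is not a different route from the paper's proof --- it is the proof the paper omits, assembled from exactly the ingredients the paper names, plus the Branching Rule. It is also worth observing that your method (restrict to the stabilizer of the fixed point, branch, and evaluate hook children via Lemma~\ref{lem:n-cycle}) is precisely the technique the paper itself uses later, in the proof of Lemma~\ref{lem:n-1_cycle_eigenvalues}, to compute the eigenvalues of $\rho_\zeta(C(n-1,n-1))$; so your write-up makes the paper's toolkit self-contained, which the citation alone does not.
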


Upper and lower bounds on eigenvalues of a symmetric matrix over the reals can be obtained from a theorem of Hermann Weyl \cite{W}. See also \cite[Theorem 2.8.1]{BH}. 

\begin{thm}{\rm (Weyl Inequalities)}
Let $C=A+B$ be symmetric $m \times m$ matrices, and let $\gamma_1 \geq \cdots \geq \gamma_m, \alpha_1 \geq \cdots \geq \alpha_m, \beta_1 \geq \cdots \geq \beta_m$ be the eigenvalues of $C, B, A$, respectively. Then for $i, j \in \{1,2,\ldots,n\}$, we have $\gamma_{i+j-1} \leq \alpha_i+\beta_j \text { whenever } i+j-1 \leq m$ and $\gamma_{i+j-m} \geq \alpha_i+\beta_j \text { whenever } i+j-1 \geq m $. In particular,
$$\gamma_1\leq \alpha_1+\beta_1\quad \text{and}\quad \gamma_m\ge \alpha_m+\beta_m.  $$
 \end{thm}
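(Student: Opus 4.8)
The plan is to deduce both families of inequalities from the Courant–Fischer variational (min–max and max–min) characterization of the ordered eigenvalues of a real symmetric matrix, combined with the elementary additivity of the Rayleigh quotient. Throughout I would write $R_M(x)=\frac{x^{T}Mx}{x^{T}x}$ for $0\neq x\in\mathbb{R}^{m}$ and let $\lambda_k(\cdot)$ denote the $k$-th largest eigenvalue, so that $\gamma_k=\lambda_k(C)$, $\alpha_k=\lambda_k(B)$ and $\beta_k=\lambda_k(A)$. The one observation that drives the whole argument is that, since $C=A+B$, we have $R_C(x)=R_A(x)+R_B(x)$ for every nonzero $x$; hence any subspace on which both $R_A$ and $R_B$ are controlled automatically controls $R_C$.

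First I would prove the upper inequality $\gamma_{i+j-1}\le\alpha_i+\beta_j$ under the hypothesis $i+j-1\le m$. By Courant–Fischer, $\lambda_{i+j-1}(C)=\min_{\dim W=m-i-j+2}\,\max_{0\neq x\in W}R_C(x)$. Let $W_A$ be the span of the eigenvectors of $A$ attached to the $m-j+1$ smallest eigenvalues $\beta_j\ge\cdots\ge\beta_m$, and let $W_B$ be the span of the eigenvectors of $B$ attached to $\alpha_i\ge\cdots\ge\alpha_m$; then $R_A\le\beta_j$ on $W_A$ and $R_B\le\alpha_i$ on $W_B$. Since $\dim W_A+\dim W_B=(m-j+1)+(m-i+1)$, the intersection $W=W_A\cap W_B$ has dimension at least $m-i-j+2$, and on $W$ we get $R_C=R_A+R_B\le\alpha_i+\beta_j$. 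Restricting to any subspace of $W$ of dimension exactly $m-i-j+2$ and inserting it into the min–max formula yields $\gamma_{i+j-1}\le\alpha_i+\beta_j$.

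The lower inequality $\gamma_{i+j-m}\ge\alpha_i+\beta_j$ for $i+j-1\ge m$ is entirely dual: I would use the max–min form $\lambda_{i+j-m}(C)=\max_{\dim V=i+j-m}\,\min_{0\neq x\in V}R_C(x)$, take $V_A$ to be the span of the eigenvectors of $A$ for the $j$ largest eigenvalues (so $R_A\ge\beta_j$ on $V_A$) and $V_B$ the span for the $i$ largest eigenvalues of $B$ (so $R_B\ge\alpha_i$ on $V_B$), and note $\dim(V_A\cap V_B)\ge i+j-m$. Alternatively this line comes for free by applying the already-proved upper inequality to $-C=(-A)+(-B)$ together with the standard duality $\lambda_k(-M)=-\lambda_{m-k+1}(M)$. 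Either way, the two stated special cases $\gamma_1\le\alpha_1+\beta_1$ and $\gamma_m\ge\alpha_m+\beta_m$ are just the instances $i=j=1$ and $i=j=m$.

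I do not expect a genuine obstacle, as this is a classical result; the only delicate points are bookkeeping rather than ideas. One must check that the index $i+j-1$ (resp. $i+j-m$) and the target dimension $m-i-j+2$ (resp. $i+j-m$) are legitimate precisely under the stated hypotheses $i+j-1\le m$ (resp. $\ge m$), and one must pass from the intersection $W$, whose dimension may strictly exceed $m-i-j+2$, down to a subspace of the exact dimension Courant–Fischer demands — the bound survives the restriction because $R_C\le\alpha_i+\beta_j$ holds pointwise on all of $W$. The sole conceptual input is the Courant–Fischer theorem itself, which I would invoke as standard.
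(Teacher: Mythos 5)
Your proof is correct: the Courant--Fischer min--max argument, the dimension count $\dim(W_A\cap W_B)\ge (m-j+1)+(m-i+1)-m=m-i-j+2$, the pointwise additivity $R_C=R_A+R_B$, and the duality $\lambda_k(-M)=-\lambda_{m-k+1}(M)$ for the lower inequalities are all handled properly, and your index bookkeeping matches the hypotheses $i+j-1\le m$ and $i+j-1\ge m$ exactly. Note, however, that the paper does not prove this statement at all --- it cites it as a classical theorem of Weyl \cite{W} (see also \cite[Theorem 2.8.1]{BH}) --- so there is no in-paper argument to compare against; your write-up is precisely the standard textbook proof that those references contain.
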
 

 \begin{re}
 	In this paper, we may apply Weyl Inequalities to symmetric matrices $A,B,C$ satisfying $C=A-B$. In this situation, the eigenvalues $\{\gamma_i\}_{i=1}^m$ of $C$ are bounded by the eigenvalues $\{\alpha_i\}_{i=1}^m$ of $A$ and the eigenvalues $\{\beta_i\}_{i=1}^m$ of $B$ in the following way:
 	\begin{align*}
 		\gamma_{i+j-1} \leq \alpha_i -\beta_{m-j+1} \text{ whenever } i+j-1\leq m, \\
 		\gamma_{i+j-m} \geq \alpha_i -\beta_{m-j+1} \text{ whenever } i+j-1\geq m.
 	\end{align*}
 \end{re}

Recall that $G_i$ is the stabilizer of $i\in [n]$ in $S_n$. We use $G_{i,j}$ to denote the set of permutations of $S_n$ sending $i$ to $j$. Then all the right cosets of $G_i$,
\begin{equation}\label{eq:right_cosets_equitable_partition}
	\Pi_i:S_n=G_{1,i}\cup G_{2,i}\cup \cdots \cup G_{n,i}
\end{equation}
form a decomposition of $S_n$. In \cite{HHC} the edge set of a Cayley graph $\mathrm{Cay}(G,S)$ is defined as $\{\{g,sg\}~|~g\in G,~s\in S\}$, and when composing two permutations $\sigma\circ \tau$ in $S_n$, the authors in \cite{HHC} do it from left to right. However, in this paper we define the edge set of $\mathrm{Cay}(G,S)$ as $\{\{g,gs\}~|~g\in G,~s\in S\}$ and when composing two permutations $\sigma\circ \tau$ in $S_n$, we do it from right to left. Considering this nonessential difference, we apply \cite[Theorem 7]{HHC} to $\mathrm{Cay}(S_n,C(n,k;r))$  in the following manner. 

\begin{lem}{\rm \cite[Theorem 7]{HHC}}\label{lem:HHC_upper_bound_1}
	Let $H=C(n,k;r)$ and $\Gamma=\operatorname{Cay}(S_n, H)$, where $n \ge 5$ and $1 \le r < k < n$. The right coset decomposition $\Pi_i$ of $S_n$ given in \eqref{eq:right_cosets_equitable_partition} leads to an equitable partition of $\Gamma$, and the corresponding quotient matrix $\mathbf{B}$ is symmetric and independent on the choice of $i\in[n]$. Moreover, if $\lambda$ is an eigenvalue of $\Gamma$ other than that of $\mathbf{B}$, then, for each $j \in[n]$, we have
\begin{equation}\label{ineq:HH_upper_bpund}
	\lambda \leqslant \lambda_2\left(\operatorname{Cay}\left(G_j, H \cap G_j\right)\right)+\lambda_2\left(\operatorname{Cay}\left(S_n, H \backslash\left(H\cap G_j\right)\right)\right),
\end{equation}
where $G_j$ is the stabilizer of $j\in [n]$ in $S_n$.
\end{lem}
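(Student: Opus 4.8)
The statement is the specialization of the general result \cite[Theorem 7]{HHC} to the connection set $H=C(n,k;r)$, so the plan is to verify the hypotheses of that theorem, re-derive the two structural assertions by a direct computation, and then read off the eigenvalue bound from \cite[Theorem 7]{HHC} after translating between the two papers' conventions. The only hypothesis needing attention is that $H$ be inverse-closed: the inverse of a $k$-cycle is again a $k$-cycle with the same support, hence still moves every point of $\{1,\dots,r\}$, so $H=H^{-1}$; the remaining requirements are just the stated ranges $n\ge 5$ and $1\le r<k<n$.

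For the equitable-partition claim I would compute intersection numbers directly. Fix $i\in[n]$ and take $g$ in the cell $G_{a,i}$, so that $g(a)=i$. For $s\in H$ the neighbour $gs$ lies in $G_{b,i}$ precisely when $(gs)(b)=i$, that is, $s(b)=g^{-1}(i)=a$. Hence the number of neighbours of $g$ inside $G_{b,i}$ equals
\[
b_{a,b}:=\bigl|\{s\in H:\ s(b)=a\}\bigr|,
\]
which depends only on $a$ and $b$ and not on the chosen $g$; this is exactly the equitability of $\Pi_i$, with quotient matrix $\mathbf{B}=(b_{a,b})$. Since $b_{a,b}$ does not involve $i$, the matrix $\mathbf{B}$ is independent of the choice of $i$. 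Finally, $s\mapsto s^{-1}$ is a bijection from $\{s\in H:s(b)=a\}$ onto $\{s\in H:s(a)=b\}$, using $H=H^{-1}$, so $b_{a,b}=b_{b,a}$ and $\mathbf{B}$ is symmetric.

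For the eigenvalue bound I would set up the decomposition $A(\Gamma)=A_1+A_2$ with $A_1=A(\mathrm{Cay}(S_n,H\cap G_j))$ and $A_2=A(\mathrm{Cay}(S_n,H\setminus(H\cap G_j)))$, both of which also have $\Pi_j$ as an equitable partition (the same intersection-number computation restricted to the relevant part of $H$). The eigenvalues of $\Gamma$ that are not eigenvalues of $\mathbf{B}$ are exactly those afforded by eigenvectors in the space $W$ of functions summing to zero on every cell of $\Pi_j$, and $W$ is invariant under $A_1$, $A_2$ and $A(\Gamma)$. Two structural features feed the bound: the subgraph of $\Gamma$ induced on the cell $G_{j,j}=G_j$ is precisely $\mathrm{Cay}(G_j,H\cap G_j)$, the graph occurring in the first summand; and the degree of $A_2$ is carried by the cell-constant all-ones vector, so that on $W$ the contribution of $A_2$ is controlled by $\lambda_2(\mathrm{Cay}(S_n,H\setminus(H\cap G_j)))$. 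Combining these through the Weyl-type estimate of \cite[Theorem 7]{HHC} yields inequality \eqref{ineq:HH_upper_bpund}.

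The step I expect to be delicate is the first summand: one must bound the $A_1$-contribution on $W$ by $\lambda_2(\mathrm{Cay}(G_j,H\cap G_j))$ rather than by the degree of $A_1$, even though $\mathrm{Cay}(S_n,H\cap G_j)$ is a disjoint union of $n$ copies of $\mathrm{Cay}(G_j,H\cap G_j)$ and therefore has the degree as its second largest eigenvalue. This replacement genuinely uses that one is estimating honest eigenvectors of $A(\Gamma)$ and not arbitrary vectors of $W$, since the cell structure of $\Pi_j$ (right cosets of $G_j$) and the component structure of $\mathrm{Cay}(S_n,H\cap G_j)$ (left cosets of $G_j$) are not aligned; this is the heart of \cite[Theorem 7]{HHC}. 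The remaining care is bookkeeping: our edges $\{g,gs\}$ and right-to-left composition differ from the edges $\{g,sg\}$ and left-to-right composition of \cite{HHC}, but the inversion map $g\mapsto g^{-1}$ is a graph isomorphism intertwining the two conventions and sending each point stabiliser to itself, so it conjugates the entire setup of this paper to that of \cite{HHC}, leaves all spectra unchanged, and lets the cited bound transfer.
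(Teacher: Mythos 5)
Correct, and essentially the paper's approach: the paper gives no independent proof of this lemma, obtaining it directly from \cite[Theorem 7]{HHC} after remarking that the edge-definition and composition conventions of the two papers differ only nonessentially, which is precisely what you do, and your added verifications (inverse-closedness of $C(n,k;r)$, the intersection-number computation showing each $\Pi_i$ is equitable, and the symmetry and $i$-independence of $\mathbf{B}$) are all correct. Your identification of the genuinely delicate step --- bounding the $H\cap G_j$ contribution by $\lambda_2(\mathrm{Cay}(G_j,H\cap G_j))$ despite the cells of $\Pi_j$ (right cosets) and the components of $\mathrm{Cay}(S_n,H\cap G_j)$ (left cosets) being misaligned, which the cited theorem resolves by using that an eigenvector for $\lambda\notin\mathrm{Spec}(\mathbf{B})$ is orthogonal to the characteristic vectors of every partition $\Pi_i$, $i\in[n]$, not just $\Pi_j$ --- is exactly the content that both you and the paper delegate to \cite[Theorem 7]{HHC}.
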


\begin{re}
\begin{enumerate}[{\rm (\romannumeral1)}]
	\item The quotient matrix $\mathbf{B}$ of $\Pi_i$ for any $i\in[n]$ is exactly the permutation matrix of $H^+$ arising from the natural permutation module $M^{(n-1,1)}$ of $S_n$. Thus as the second largest eigenvalue of $H^+$ on $M^{(n-1,1)}$, $\mu_2(n,k;r)$ is exactly $\lambda_2(\mathbf{B})$.  By Young's Rule, the natural permutation module $M^{(n-1,1)}$ of $S_n$ decomposes into one trivial module $S^{(n)}$  and one  $S^{(n-1,1)}$. Thus the spectrum of $H^+$ on $M^{(n-1,1)}$ is the union of the spectra of $H^+$ on $S^{(n)}$ and $S^{(n-1,1)}$. Clearly, $H^+$ acting on $S^{(n)}$ gives the largest eigenvalue $|H|$ of $\mathrm{Cay}(S_n,H)$. Then the spectrum of $H^+$ on  $S^{(n-1,1)}$ is obtained by dividing the largest eigenvalue $|H|$ from the spectrum of $H^+$ on $M^{(n-1,1)}$. In particular, the second largest eigenvalue $\mu_2(n,k;r)$ of $H^+$ on $M^{(n-1,1)}$, which is also $\lambda_2(\mathbf{B})$, is exactly the largest eigenvalue of $H^+$ on  $S^{(n-1,1)}$, that is, $\alpha_1(\rho_{(n-1,1)}(H))$.

	\item When $j\leq r$, the definition of $H=C(n,k;r)$ implies that $H\cap G_j=\emptyset$, and thus the right hand side of \eqref{ineq:HH_upper_bpund} is just $\lambda_2(\mathrm{Cay}(S_n,C(n,k;r)))$, which is a trivial upper bound for $\lambda$. When making use of \eqref{ineq:HH_upper_bpund}, we shall take $j$ from $[n]\setminus [r]$ and mostly we just let $j=n$.
\end{enumerate}
\end{re}

 The following two lemmas on the spectrum of $\rho_{(n-1,1)}(C(n,k;r))$  is a direct corollary of Theorem 5.2, Lemma 5.3 and Lemma 3.2 in \cite{SZ}. 


 \begin{lem}\label{lem:eigenvalues_standard_2}
 Let  $H=C(n, k ; r)$, where $n \ge 5$ and $2 \le r < k < n$. Then the distinct eigenvalues of $H^{+}$ on the standard representation of $S_n$ are
 $$
 \begin{aligned}
 &  \alpha_1(\rho_{(n-1,1)}(H))=(k-2) !\binom{n-r}{k-r}\frac{1}{n-r}\left((k-1)(n-k)-\frac{(k-r-1)(k-r)}{n-r-1}\right), \\
 & \alpha_2(\rho_{(n-1,1)}(H))=(k-2) !\binom{n-r}{k-r}\left(\frac{r(n-k)}{n-r}-1\right), \text { and } \\ 
 & \alpha_3(\rho_{(n-1,1)}(H))=-(k-2) !\binom{n-r}{k-r}.
 \end{aligned}
 $$	
 \end{lem}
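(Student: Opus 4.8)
The plan is to exploit the decomposition $M^{(n-1,1)} \cong S^{(n)} \oplus S^{(n-1,1)}$ recorded in the Remark following Lemma~\ref{lem:HHC_upper_bound_1}. The action of $H^+$ on the natural permutation module $M^{(n-1,1)} = \mathbb{C}\{e_1,\dots,e_n\}$ is given by the symmetric (since $H$ is inverse-closed) $n \times n$ matrix $\mathbf{B}$ with $\mathbf{B}_{ij} = |\{\sigma \in H : \sigma(j)=i\}|$. Because the trivial summand $S^{(n)}$ contributes the single eigenvalue $|H| = (k-1)!\binom{n-r}{k-r}$ (with the all-ones eigenvector), the spectrum of $\rho_{(n-1,1)}(H)$ is exactly $\mathrm{Spec}(\mathbf{B})$ with one copy of $|H|$ removed. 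So it suffices to diagonalize $\mathbf{B}$.

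First I would compute the entries of $\mathbf{B}$ combinatorially. A cycle $\sigma \in H$ has support a $k$-subset of $[n]$ containing $[r]$, and $\sigma(j)=i$ forces $i,j$ into the support; fixing the ordered adjacent pair $j \mapsto i$ leaves $(k-2)!$ cycles on each admissible support. Counting admissible supports then splits according to how many of $i,j$ lie in $[r]$, giving four values: a common diagonal entry $(k-1)!\binom{n-1-r}{k-r}$ on $[n]\setminus[r]$ (and $0$ on the diagonal over $[r]$), and off-diagonal entries $(k-2)!\binom{n-r}{k-r}$, $(k-2)!\binom{n-r-1}{k-r-1}$, $(k-2)!\binom{n-r-2}{k-r-2}$ according to whether both, one, or neither of $i,j$ lies in $[r]$. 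Thus $\mathbf{B}$ is block-constant with respect to the partition $\{[r],[n]\setminus[r]\}$.

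Next I would diagonalize using the $S_r \times S_{n-r}$-symmetry: $\mathbf{B}$ commutes with all coordinate permutations stabilising $[r]$ setwise, so it preserves the decomposition of $\mathbb{C}^n$ into the sum-zero subspace $W_r$ of $\mathbb{C}^{[r]}$ (dimension $r-1$), the sum-zero subspace $W_{n-r}$ of $\mathbb{C}^{[n]\setminus[r]}$ (dimension $n-r-1$), and the two-dimensional space of block-constant vectors. On $W_r$ and $W_{n-r}$ it acts by scalars, which a short computation identifies as $\alpha_3 = -(k-2)!\binom{n-r}{k-r}$ and $\alpha_1$ respectively; on the two-dimensional block it acts by an explicit $2\times 2$ matrix with eigenvalues $|H|$ and a second value $\beta$. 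Discarding $|H|$ and simplifying $\beta$ via the binomial ratios $\binom{n-1-r}{k-r}/\binom{n-r}{k-r} = (n-k)/(n-r)$ and $\binom{n-r-2}{k-r-2}/\binom{n-r}{k-r} = (k-r)(k-r-1)/((n-r)(n-r-1))$ yields $\beta = \alpha_2 = (k-2)!\binom{n-r}{k-r}(\tfrac{r(n-k)}{n-r}-1)$. This accounts for all $n-1$ eigenvalues of the standard representation, with multiplicities $n-r-1$, $1$, $r-1$ for $\alpha_1,\alpha_2,\alpha_3$.

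The routine part is the entry count; the only genuine check is that $\alpha_1,\alpha_2,\alpha_3$ are \emph{distinct} (so that the paper's convention $\alpha_1>\alpha_2>\alpha_3$ applies) and correctly ordered. Here $\alpha_3 < 0 < \alpha_1$ is clear, $\alpha_2 > \alpha_3$ reduces to $r(n-k)>0$, and $\alpha_1 - \alpha_2$ is a positive multiple of $(k-1-r)(n-k) + (n-r) - \frac{(k-r-1)(k-r)}{n-r-1}$, which one verifies is strictly positive for all $n\ge 5$ and $2\le r<k<n$ (the hypothesis $r\ge 2$ is exactly what makes $\alpha_3$ occur with positive multiplicity). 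I expect this ordering verification, rather than the diagonalization, to be the only place needing care. As a shortcut that avoids the self-contained computation entirely, one may instead simply quote Theorem~5.2, Lemma~5.3 and Lemma~3.2 of \cite{SZ} for the full spectrum of $H^+$ on $M^{(n-1,1)}$ and delete the trivial eigenvalue $|H|$.
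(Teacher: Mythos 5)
Your proposal is correct, but it takes a genuinely different route from the paper: the paper offers no self-contained argument at all, stating only that this lemma (together with Lemma~\ref{lem:eigenvalues_standard_1}) is ``a direct corollary of Theorem 5.2, Lemma 5.3 and Lemma 3.2 in \cite{SZ}'' --- precisely the citation shortcut you relegate to your final sentence. Your main argument --- computing the entries of $\mathbf{B}$ and then splitting $\mathbb{C}^n$ under the $S_r\times S_{n-r}$-symmetry --- checks out: the entry counts $(k-2)!\binom{n-r}{k-r}$, $(k-2)!\binom{n-r-1}{k-r-1}$, $(k-2)!\binom{n-r-2}{k-r-2}$ off the diagonal and $(k-1)!\binom{n-1-r}{k-r}$ on the diagonal over $[n]\setminus[r]$ are right; on $W_r$ the action is the scalar $-(k-2)!\binom{n-r}{k-r}=\alpha_3$; on $W_{n-r}$ it is $(k-1)!\binom{n-1-r}{k-r}-(k-2)!\binom{n-r-2}{k-r-2}$, which the two binomial ratios you quote convert into $\alpha_1$; and the trace of the $2\times 2$ block on block-constant vectors minus $|H|$ simplifies to $\alpha_2$. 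Your ordering check also goes through: since $k\le n-1$ gives $\frac{(k-r-1)(k-r)}{n-r-1}\le k-r-1$, the difference $\alpha_1-\alpha_2$ is a positive multiple of $(k-r-1)(n-k-1)+(n-r)>0$, and $\alpha_2-\alpha_3$ is a positive multiple of $r(n-k)>0$. What your computation buys is self-containedness plus extra information: it delivers the multiplicities $n-r-1$, $1$, $r-1$ of $\alpha_1,\alpha_2,\alpha_3$ for \emph{all} $k$, whereas the paper records multiplicities only in the special case $k=n-1$ (Lemma~\ref{lem:multiplicity_n-1}), and there again by citing \cite{SZ} together with a trace argument. What the paper's approach buys is brevity, at the cost of sending the reader to three external results of \cite{SZ}.
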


 \begin{lem}\label{lem:eigenvalues_standard_1}
 	Let $H=C(n, k ; 1)$, where $n \ge 5$ and $2 \le k < n$. Then the distinct eigenvalues of $H^{+}$ on the standard representation of $S_n$ are
 $$
 \begin{aligned}
 &  \alpha_1(\rho_{(n-1,1)}(H))=(k-2) !\binom{n-1}{k-1}\frac{1}{n-1}\left((k-1)(n-k)-\frac{(k-2)(k-1)}{n-2}\right), \text{and} \\
 & \alpha_2(\rho_{(n-1,1)}(H))=-(k-2) !\binom{n-2}{k-2}
 .
 \end{aligned}
 $$	
 \end{lem}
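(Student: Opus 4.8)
The plan is to exploit the large symmetry of $H=C(n,k;1)$ together with the information already recorded in the Remark following Lemma~\ref{lem:HHC_upper_bound_1}. That remark tells us, for free, that $\alpha_1(\rho_{(n-1,1)}(H))=\mu_2(n,k;1)$, and specializing the formula for $\mu_2(n,k;r)$ at $r=1$ gives exactly the claimed value of $\alpha_1$. So the genuine content left to establish is (i) that $H^+$ has \emph{only} two distinct eigenvalues on $S^{(n-1,1)}$, and (ii) that the second one equals $-(k-2)!\binom{n-2}{k-2}$ with multiplicity $1$. I would obtain both from a single Schur-type reduction.

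First I would observe that $H=C(n,k;1)$ is invariant under conjugation by the point stabilizer $G_1\cong S_{n-1}$: if $g\in G_1$ and $h\in H$, then $ghg^{-1}$ is again a $k$-cycle, and for $i=1$ one has $ghg^{-1}(1)=g(h(1))\neq 1$ since $h(1)\neq 1$ and $g$ fixes only $1$, so $ghg^{-1}\in H$. Consequently $\rho_{(n-1,1)}(H)$ commutes with every matrix in $\rho_{(n-1,1)}(G_1)$. By the Branching Rule, $S^{(n-1,1)}\!\downarrow_{S_{n-1}}\cong S^{(n-1)}\oplus S^{(n-2,1)}$, a multiplicity-free sum of two \emph{non-isomorphic} irreducible $G_1$-modules. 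Schur's Lemma then forces $\rho_{(n-1,1)}(H)$ to act as a scalar on each summand, so it has at most two distinct eigenvalues, of multiplicities $\dim S^{(n-1)}=1$ and $\dim S^{(n-2,1)}=n-2$. (This is also the conceptual reason the $r=1$ case in Lemma~\ref{lem:eigenvalues_standard_1} has two eigenvalues rather than the three of Lemma~\ref{lem:eigenvalues_standard_2}: for $r\ge 2$ the relevant symmetry group is $S_r\times S_{n-r}$, whose restriction of $S^{(n-1,1)}$ has three non-isomorphic summands, whereas at $r=1$ the $S_r=S_1$ factor contributes nothing.)

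It remains to identify the scalar on the one-dimensional summand $S^{(n-1)}$, which is spanned by the unique (up to scalar) $G_1$-fixed vector of $S^{(n-1,1)}$. Realizing $S^{(n-1,1)}$ inside $M^{(n-1,1)}=\langle e_1,\dots,e_n\rangle$ as $\{\sum v_ie_i:\sum v_i=0\}$, this vector is $v_0=(n-1)e_1-\sum_{j\ge 2}e_j$. Since $H^+$ commutes with $G_1$ and $\langle e_2+\cdots+e_n\rangle$ is $G_1$-fixed while no $h\in H$ fixes $1$, one gets $H^+e_1=\tfrac{|H|}{n-1}\sum_{j\ge 2}e_j$; combining this with $H^+\mathbf{1}=|H|\,\mathbf{1}$ yields, after a short computation, $H^+v_0=-\tfrac{|H|}{n-1}v_0$. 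Finally $|H|=|C(n,k;1)|=(k-1)!\binom{n-1}{k-1}$ (choose a $k$-subset containing $1$, then cyclically order it), and
\[
-\frac{|H|}{n-1}=-\frac{(k-1)!\binom{n-1}{k-1}}{n-1}=-\frac{(n-2)!}{(n-k)!}=-(k-2)!\binom{n-2}{k-2}.
\]
As this value is negative while $\mu_2(n,k;1)>0$, it is the smaller eigenvalue $\alpha_2$, completing the determination.

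I do not expect a serious obstacle here; the only points requiring care are verifying the $G_1$-conjugation-invariance of $H$ (which legitimizes the Schur reduction) and the bookkeeping in the action of $H^+$ on $v_0$ and in simplifying $|H|/(n-1)$. An alternative, which is closer to the cited source, would bypass the symmetry argument altogether: invoke \cite[Theorem~5.2, Lemma~5.3, Lemma~3.2]{SZ} to read off the full spectrum of $H^+$ on $M^{(n-1,1)}=S^{(n)}\oplus S^{(n-1,1)}$, and then delete the degree eigenvalue $|H|$ coming from the trivial summand $S^{(n)}$; in that route the only work is the purely algebraic check that the surviving eigenvalues and multiplicities match the two stated values.
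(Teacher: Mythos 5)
Your proof is correct, but it takes a genuinely different route from the paper's. The paper gives no argument of its own for this lemma: it simply declares it (together with Lemma~\ref{lem:eigenvalues_standard_2}) a direct corollary of Theorem 5.2, Lemma 5.3 and Lemma 3.2 in \cite{SZ} --- that is, it reads the full spectrum of $H^+$ on $M^{(n-1,1)}$ off the cited source and deletes the degree eigenvalue $|H|$ carried by the trivial summand, which is precisely the ``alternative'' you sketch in your final paragraph. Your main argument instead recovers the structure independently: closure of $C(n,k;1)$ under $G_1$-conjugation, the multiplicity-free branching $S^{(n-1,1)}\!\downarrow_{S_{n-1}}\cong S^{(n-1)}\oplus S^{(n-2,1)}$, and Schur's Lemma force $\rho_{(n-1,1)}(H)$ to have at most two distinct eigenvalues, with multiplicities $1$ and $n-2$; you then compute the eigenvalue on the $G_1$-fixed line by hand as $-|H|/(n-1)=-(k-2)!\binom{n-2}{k-2}$ and import $\alpha_1=\mu_2(n,k;1)$ from the Remark following Lemma~\ref{lem:HHC_upper_bound_1} (the one place where you, like the paper, still lean on \cite{SZ}). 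What your route buys: it explains conceptually why $r=1$ yields two eigenvalues while $r\ge 2$ yields three, it produces the multiplicities for free, and it could be made fully self-contained by replacing the appeal to the Remark with the trace identity $\operatorname{tr}\rho_{(n-1,1)}(H)=|H|(n-k-1)$, which pins down the second scalar once the first is known. Two harmless slips, neither affecting correctness: an element $g\in G_1$ need not fix \emph{only} the point $1$ (all you actually use is that $g$ is a bijection with $g(1)=1$, so $g(h(1))\ne 1$ whenever $h(1)\ne 1$), and the assertion $\mu_2(n,k;1)>0$, needed to conclude that the negative value is the smaller eigenvalue $\alpha_2$, deserves the one-line check $(k-1)\left(n-k-\frac{k-2}{n-2}\right)>0$ for $2\le k\le n-1$.
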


The following lemma gives the multiplicities of the eigenvalues of $\rho_{(n-1,1)}(C(n,n-1;r))$.   The proof here  is similar to that of Lemma 6.1 in \cite{SZ}.

\begin{lem}\label{lem:multiplicity_n-1}
	Let $n\ge 5$ and $H=C(n,n-1;r)$. When $2\leq r\leq n-2$, the spectrum of $H^+$ on the standard representation of $S_n$ are
$$\mathrm{Spec}(\rho_{(n-1,1)}(H))=\begin{pmatrix}
	r(n-3)! & (2r-n)(n-3)! & (r-n)(n-3)!\\
	n-r-1 &1&r-1
\end{pmatrix}.$$
When $r=1$, the spectrum of $H^+$ on the standard representation of $S_n$ are
$$\mathrm{Spec}(\rho_{(n-1,1)}(H))=\begin{pmatrix}
	(n-3)! &  -(n-2)!\\
	n-2 &1
\end{pmatrix}.$$
\end{lem}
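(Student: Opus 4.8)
The plan is to realize $\rho_{(n-1,1)}(H)$ inside the natural permutation module and read off the spectrum from a single combinatorial matrix. As recorded in the remark following Lemma~\ref{lem:HHC_upper_bound_1}, the matrix of $H^+$ on $M^{(n-1,1)}$ is the symmetric quotient matrix $\mathbf{B}$, and $M^{(n-1,1)} \cong S^{(n)} \oplus S^{(n-1,1)}$, where the summand $S^{(n)}$ contributes the single eigenvalue $|H| = (n-r)(n-2)!$. Hence $\mathrm{Spec}(\rho_{(n-1,1)}(H))$ is exactly $\mathrm{Spec}(\mathbf{B})$ with one copy of $|H|$ removed, and it suffices to diagonalize the $n \times n$ matrix $\mathbf{B}$.

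First I would compute the entries of $\mathbf{B}$. Indexing rows and columns by $[n]$, we have $\mathbf{B}_{ij} = |\{h \in H : h(j) = i\}|$. Every $h \in H$ is an $(n-1)$-cycle whose unique fixed point lies in $\{r+1, \dots, n\}$. For the diagonal, $\mathbf{B}_{ii}$ counts the $(n-1)$-cycles fixing $i$, giving $(n-2)!$ when $i > r$ and $0$ when $i \le r$. For $i \ne j$, the fixed point $m$ of such an $h$ must avoid $i$ and $j$ and lie in $\{r+1,\dots,n\}$, and for each admissible $m$ there are exactly $(n-3)!$ full cycles on $[n]\setminus\{m\}$ sending $j$ to $i$; tallying the admissible values of $m$ gives $\mathbf{B}_{ij} = (n-r-2)(n-3)!$, $(n-r-1)(n-3)!$, or $(n-r)(n-3)!$ according as two, one, or none of $i,j$ exceed $r$. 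Note that each row of $\mathbf{B}$ sums to $|H|$, since for fixed $i$ and $h \in H$ there is a unique $j = h^{-1}(i)$ with $h(j)=i$.

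Next I would diagonalize $\mathbf{B}$ using its symmetry. Partition $[n] = A \cup B$ with $A = [r]$ and $B = \{r+1,\dots,n\}$; then $\mathbf{B}$ commutes with the coordinate action of $S_r \times S_{n-r}$, so $\mathbb{C}^n$ splits into three invariant subspaces: the two-dimensional space $V_0$ of vectors constant on $A$ and constant on $B$, the $(r-1)$-dimensional space $V_A$ of zero-sum vectors supported on $A$, and the $(n-r-1)$-dimensional space $V_B$ of zero-sum vectors supported on $B$. A short computation with the entries above shows that $\mathbf{B}$ acts on $V_A$ as the scalar $(r-n)(n-3)!$ and on $V_B$ as the scalar $r(n-3)!$. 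On $V_0$ it is a $2 \times 2$ block; the all-ones vector lies in $V_0$ and is an eigenvector with eigenvalue $|H| = (n-r)(n-2)!$ by the constant row-sum observation, and the trace of the block then forces the second eigenvalue on $V_0$ to be $(2r-n)(n-3)!$.

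Deleting the eigenvalue $|H|$ yields the claimed spectrum when $2 \le r \le n-2$, with multiplicities $n-r-1$, $1$, $r-1$ on $V_B$, $V_0$, $V_A$ respectively. When $r=1$ the space $V_A$ is trivial, so only $(n-3)!$ (multiplicity $n-2$, from $V_B$) and $(2-n)(n-3)! = -(n-2)!$ (multiplicity $1$, from $V_0$) survive, giving the second displayed formula. Two checks support the computation: the distinct values agree with Lemmas~\ref{lem:eigenvalues_standard_2} and \ref{lem:eigenvalues_standard_1} specialized to $k=n-1$, and the weighted multiplicity sum vanishes, matching $\mathrm{Trace}(\rho_{(n-1,1)}(H)) = |H|\,\chi_{(n-1,1)}(\sigma) = 0$ since every $(n-1)$-cycle $\sigma$ has a single fixed point. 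The only real care needed is in tallying the off-diagonal entries of $\mathbf{B}$ and extracting the second eigenvalue of the $V_0$-block; both are elementary, so I anticipate no genuine obstacle beyond this bookkeeping.
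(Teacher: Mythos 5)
Your proof is correct, and it takes a genuinely different route from the paper. The paper's proof is essentially a citation-plus-bookkeeping argument: it quotes \cite[Theorem 5.2]{SZ} for the three eigenvalues of $H^+$ on $M^{(n-1,1)}$ and \cite[Lemma 3.2]{SZ} for the fact that the multiplicities form the multiset $\{1,\,n-r-1,\,r-1\}$, and then pins down which multiplicity belongs to which eigenvalue by the trace identity $|H|+x\cdot r(n-3)!+y\cdot(2r-n)(n-3)!+z\cdot(r-n)(n-3)!=|H|$, checking that only the assignment $(x,y,z)=(n-r-1,1,r-1)$ works. You instead compute everything from first principles: you write down the entries of the quotient matrix $\mathbf{B}$ explicitly (which I have checked --- the diagonal entries $(n-2)!$ or $0$ and the off-diagonal counts $(n-r-2)(n-3)!$, $(n-r-1)(n-3)!$, $(n-r)(n-3)!$ are all correct), exploit the $S_r\times S_{n-r}$ symmetry to split $\mathbb{C}^n$ into the invariant subspaces $V_0\oplus V_A\oplus V_B$, verify the scalar actions $(r-n)(n-3)!$ on $V_A$ and $r(n-3)!$ on $V_B$, and extract the remaining eigenvalue $(2r-n)(n-3)!$ on $V_0$ by a trace argument. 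What your approach buys is self-containedness: you need neither of the two results from \cite{SZ}, and the eigenvalue--multiplicity pairing falls out automatically from identifying the eigenspaces, so you avoid the paper's case check that no other assignment of multiplicities satisfies the trace equation. What the paper's approach buys is brevity, since both the eigenvalues and the multiplicity multiset are taken off the shelf. Your two cross-checks (agreement with Lemmas~\ref{lem:eigenvalues_standard_2} and \ref{lem:eigenvalues_standard_1} at $k=n-1$, and the vanishing weighted trace) both hold, and your handling of $r=1$ via the degeneration $\dim V_A=0$ is clean and matches the stated spectrum.
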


\begin{proof}
	Suppose $2\leq r\leq n-2$. From \cite[Theorem 5.2]{SZ}, we obtain that all the eigenvalues of $H^+$ on the natural permutation module $M^{(n-1,1)}$ are $|H|$, $r(n-3)!$, $(2r-n)(n-3)!$ and $(r-n)(n-3)!$. Clearly, the largest eigenvalue $|H|$ is simple. Lemma 3.2 in \cite{SZ} indicates that the multiplicities $x,~y,~z$ of the other three eigenvalues are such that $\{x,y,z\}=\{1,~n-r-1,~r-1\}$. For each $\sigma\in S_n$, the trace of $\sigma$ on $M^{(n-1,1)}$ is the number of fixed points of $\sigma$. Thus the trace of $H^+$ on $M^{(n-1,1)}$ is $|H|$. Then we have 
	\begin{equation*}
		|H|+ x\cdot r(n-3)!+y\cdot (2r-n)(n-3)!+ z\cdot (r-n)(n-3)!=|H|.
	\end{equation*}
	If we take $x=n-r-1$, $y=1$ and $z=r-1$, then the quality holds. In addition, the equality fails for any other choice of $x,~y,~z$ such that $\{x,y,z\}=\{1,~n-r-1,~r-1\}$. 

	The proof for the case $r=1$ is similar.
\end{proof}

Finally, we cite the following lemma, which indicates that $\mathrm{Cay}(S_n,C(n,k;r))$ is connected if $k$ is even and has two connected components each isomorphic to $\mathrm{Cay}(A_n,C(n,k;r))$ if $k$ is odd.
\begin{lem}{\upshape \cite[Lemma 5.1]{SZ}} \label{lem:generating_subgroup}
 	Let $1 \leq r<k \leq n$ and let $X$ be the smallest subgroup of $S_n$ containing $C(n, k ; r)$. Then $X=S_n$ if $k$ is even, and $X=A_n$ if $k$ is odd.
 \end{lem}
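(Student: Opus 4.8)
The plan is to reduce the whole statement to the single assertion $A_n\le X$, and to separate the two cases by a sign computation. A $k$-cycle is a product of $k-1$ transpositions, so it has sign $(-1)^{k-1}$; hence every element of $C(n,k;r)$ lies in $A_n$ precisely when $k$ is odd. Consequently, if $k$ is odd then $X\le A_n$ automatically, whereas if $k$ is even then $X$ contains an odd permutation. Therefore the lemma follows once we establish $A_n\le X$ in all cases: for odd $k$ this forces $X=A_n$, and for even $k$ it forces $X=S_n$, since a subgroup containing $A_n$ together with an odd permutation must be all of $S_n$.

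The heart of the matter is to manufacture $3$-cycles inside $X$. The key computation is that if $c_1=(a_1\,a_2\,\cdots\,a_{k-1}\,x)$ and $c_2=(a_1\,a_2\,\cdots\,a_{k-1}\,y)$ are two $k$-cycles differing only in one slot, with $y\notin\operatorname{supp}(c_1)$, then $c_1c_2^{-1}=(a_1\,y\,x)$ is a $3$-cycle, where $a_1$ is the successor of $x$ in $c_1$. To keep both cycles in $C(n,k;r)$ one needs the replaced point $x$ to lie outside $[r]$ (so that $[r]$ remains in the support) and $y$ to avoid $\operatorname{supp}(c_1)$; the latter requires $\operatorname{supp}(c_1)$ to omit some point of $T:=[n]\setminus[r]$, i.e. $k\le n-1$. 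Choosing the cyclic order so that the successor of $x$ is a point $q\in[r]$ then produces a $3$-cycle with exactly one point in $[r]$ and two in $T$ (all these choices are simultaneously realizable whenever $r\ge 1$ and $k\le n-1$).

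Next I would exploit symmetry. Since $C(n,k;r)$ is invariant under conjugation by the Young subgroup $S_{[r]}\times S_{T}$, this subgroup normalizes $X$, so every $S_{[r]}\times S_{T}$-conjugate of the $3$-cycle above lies in $X$; this yields \emph{all} $3$-cycles with one point in $[r]$ and two in $T$. A short generation argument then finishes the case $k\le n-1$: fixing a point $q\in[r]$ and varying the two $T$-points shows $X\supseteq A_{\{q\}\cup T}$, while a product of two such $3$-cycles produces (for $r\ge 2$) a $3$-cycle with two points in $[r]$, giving likewise $X\supseteq A_{[r]\cup\{a\}}$ for $a\in T$. These alternating subgroups have supports covering $[n]$ and overlap pairwise in at least two points, so they generate $A_n$ (when $r=1$ the first subgroup already equals $A_n$).

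Finally, the boundary case $k=n$ must be treated separately, since now the support is all of $[n]$ and there is no point left to play the role of $y$. Here $C(n,n;r)$ is simply the set of all $n$-cycles, a single conjugacy class, so $X$ is normal in $S_n$; a direct computation shows that the product of two suitably chosen $n$-cycles is a $3$-cycle, and normality propagates it to every $3$-cycle, whence $A_n\le X$. I expect the main obstacle to be the generation step when $r$ is large: the $3$-cycles directly accessible are then confined to have at most one point in $[r]$, so one must carefully bootstrap from these ``mixed'' $3$-cycles up to all of $A_n$, while the degenerate case $k=n$ needs its own short argument.
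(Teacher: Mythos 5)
The paper offers no proof of this lemma at all: it is imported verbatim from \cite[Lemma 5.1]{SZ}, so there is no internal argument to compare yours against. Judged on its own merits, your proposal is a correct, self-contained elementary proof. The sign reduction is right: a $k$-cycle has sign $(-1)^{k-1}$, so for odd $k$ one gets $X\le A_n$ for free, and in all cases it suffices to prove $A_n\le X$. Your key computation checks out: with right-to-left composition, if $c_1=(a_1\,a_2\,\cdots\,a_{k-1}\,x)$ and $c_2=(a_1\,a_2\,\cdots\,a_{k-1}\,y)$ with $y\notin\operatorname{supp}(c_1)$, then $c_1c_2^{-1}$ fixes $a_2,\ldots,a_{k-1}$ and equals $(a_1\,y\,x)$; the side constraints ($x\in T$, $[r]\subseteq\{a_1,\ldots,a_{k-1}\}$, $y$ available in $T$) are simultaneously realizable exactly when $k\le n-1$, using $r\le k-1$ and $|T|=n-r\ge 2$. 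The invariance of $C(n,k;r)$ under conjugation by $S_{[r]}\times S_T$ does normalize $X$, so you indeed get all $3$-cycles with one point in $[r]$ and two in $T$; and the two generation facts you invoke are standard (for the second, pick $p,q$ in the two-point intersection of the supports and observe that $\langle A_U,A_V\rangle$ contains every $(p\,q\,w)$ with $w\in[n]\setminus\{p,q\}$, which generate $A_n$). The $k=n$ boundary case also works: $C(n,n;r)=C(n,n)$ is a full conjugacy class, so $X\trianglelefteq S_n$, and since $(1\,2\,3)(1\,2\,\cdots\,n)$ is again an $n$-cycle, a $3$-cycle is a product of two $n$-cycles, whence normality gives all $3$-cycles and $A_n\le X$.

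Two small items you would want to spell out in a polished write-up, though neither is a genuine gap: the explicit bootstrap computation for $r\ge 2$, e.g. $(q_1\,t\,s)(q_2\,s\,t)=(q_1\,t\,q_2)$, which produces the $3$-cycles with two points in $[r]$; and the degenerate case $n=2$ (where $A_2$ is trivial and the statement is immediate). With those included, the argument is complete.
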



\section{ $\mathbf{C(n,n-1;r)}$ }
\label{sec:proof_for_theorem_n-1}

For every $i\in [n]$, the symmetric group $\mathrm{Sym_{[n]\setminus \{i\}}}$ is isomorphic to $S_{n-1}$ under the isomorphism $f_i:\mathrm{Sym_{[n]\setminus \{i\}}} \to S_{n-1},~g \mapsto (n~i)g(n~i)$, and $f_i$ sends \[P_i:=C(n,n-1)\cap G_i\] to $C(n-1,n-1)$. Thus for any $\zeta\vdash n$, we have $\rho_\zeta(C(n-1,n-1))=M_i \rho_\zeta(P_i ) M_i^{-1}$ with $M_i=\rho_\zeta((n~i))$. This indicates that $\rho_\zeta(P_1 )$, $\rho_\zeta(P_2 )$, $\ldots$, $\rho_\zeta(P_n )$ share the same spectrum with $\rho_\zeta(C(n-1,n-1))$ for any fixed $\zeta\vdash n$. Now we use Branching Rule to calculate the eigenvalues of $\rho_{\zeta}(C(n-1,n-1))$ for every irreducible representation $\rho_\zeta$ of $S_n$ except for the trivial representation $\rho_{(n)}$ and the standard representation $\rho_{(n-1,1)}$. The results are recorded in the following lemma.

\begin{lem}\label{lem:n-1_cycle_eigenvalues}
Let $H=C(n-1,n-1)$, where $n\ge 7$, and let $\zeta\vdash n$ be such that $\zeta\ne (n)$ or $(1^n)$.  
\begin{enumerate}[{\rm(a)}]
 		\item If $\zeta$ is not a hook or a near hook, then all the eigenvalues of $\rho_{\zeta}(H)$ are $0$. 
 		\item If $\zeta=(n-m,1^m)$ is a hook with $1\leq m\leq n-2$, then the distinct eigenvalues of $\rho_\zeta(H)$ are $(-1)^mm!(n-2-m)!$ and $(-1)^{m-1}(m-1)!(n-1-m)!$.
        \item If $\zeta=(n-m,2,1^{m-2})$ with $2\leq m\leq n-2$, then the distinct eigenvalues of $\rho_\zeta(H)$ are $(-1)^{m-1}(m-1)!(n-1-m)!$ and $0$.
 	\end{enumerate} 	
\end{lem}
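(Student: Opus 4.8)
The plan is to exploit the Branching Rule together with the fact that $H=C(n-1,n-1)$ is a full conjugacy class of $S_{n-1}\cong\mathrm{Sym}_{[n-1]}$. Since $H\subset S_{n-1}$, restricting the module $S^\zeta$ to $S_{n-1}$ and using the decomposition $S^\zeta\downarrow_{S_{n-1}}\cong\bigoplus_{\zeta^-}S^{\zeta^-}$ shows that, after a suitable change of basis, $\rho_\zeta(H)$ is block-diagonal with one block $\rho_{\zeta^-}(H)$ for each partition $\zeta^-\vdash n-1$ obtained by deleting a box from $\zeta$. Hence $\mathrm{Spec}(\rho_\zeta(H))$ is the union, over all such $\zeta^-$, of $\mathrm{Spec}(\rho_{\zeta^-}(H))$. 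Because $H$ is closed under conjugation in $S_{n-1}$, Schur's Lemma (as recorded for normal connection sets above) forces each block $\rho_{\zeta^-}(H)$ to be a scalar matrix, whose unique eigenvalue equals $|H|\,\tilde\chi_{\zeta^-}(\sigma)=(n-2)!\,\tilde\chi_{\zeta^-}(\sigma)$, where $\sigma$ is any fixed $(n-1)$-cycle of $S_{n-1}$. Thus the whole problem reduces to evaluating these normalized characters.

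Next I would apply Lemma~\ref{lem:n-cycle} to $S_{n-1}$: for the $(n-1)$-cycle $\sigma$ the value $\tilde\chi_{\zeta^-}(\sigma)$ vanishes unless $\zeta^-$ is a hook $(n-1-m',1^{m'})$, in which case $(n-2)!\,\tilde\chi_{\zeta^-}(\sigma)$ simplifies to $(-1)^{m'}m'!\,(n-2-m')!$. Consequently every non-hook $\zeta^-$ contributes only the eigenvalue $0$, and the nonzero eigenvalues of $\rho_\zeta(H)$ are exactly the numbers $(-1)^{m'}m'!\,(n-2-m')!$ arising from those box removals that produce a hook of $n-1$. So the remaining task is purely combinatorial: determine, for each shape of $\zeta$, which corner removals yield a hook.

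The combinatorial core rests on the observation that adding a single box to a hook of $n-1$ can only produce a hook or a near hook of $n$; equivalently, a removal $\zeta^-$ is a hook only if $\zeta$ is itself a hook or a near hook. This immediately gives part (a): when $\zeta$ is neither, no $\zeta^-$ is a hook, so all eigenvalues are $0$. For part (b), the hook $\zeta=(n-m,1^m)$ has exactly two corners, whose removals give the hooks $(n-m-1,1^m)$ and $(n-m,1^{m-1})$, that is, $m'=m$ and $m'=m-1$; substituting yields the two stated values $(-1)^m m!(n-2-m)!$ and $(-1)^{m-1}(m-1)!(n-1-m)!$, which are distinct since they have opposite signs and neither magnitude vanishes. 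For part (c), the near hook $\zeta=(n-m,2,1^{m-2})$ has its box $(2,2)$ as the only corner whose removal is a hook, namely $(n-m,1^{m-1})$ with $m'=m-1$, contributing $(-1)^{m-1}(m-1)!(n-1-m)!$, while every other corner removal yields a near hook of $n-1$ and hence the eigenvalue $0$.

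The step I expect to demand the most care is the corner bookkeeping in part (c), where one must treat the boundary shapes $m=2$ and $m=n-2$ separately, both to confirm that the box $(2,2)$ is always removable to a hook and to verify that at least one other corner removal produces a near hook, so that the eigenvalue $0$ genuinely occurs. Throughout, the symmetry of $\rho_\zeta(H)$ guaranteed by Lemma~\ref{lem:symmetric} (since $H=H^{-1}$) ensures the eigenvalues are real and the block decomposition is diagonalizable, consistent with the scalar-block description above.
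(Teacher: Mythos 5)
Your proposal is correct and follows essentially the same route as the paper's proof: restrict via the Branching Rule, use Schur's Lemma to see each block $\rho_{\zeta^-}(H)$ is the scalar $|H|\,\tilde{\chi}_{\zeta^-}(\sigma)$, and evaluate these with Lemma~\ref{lem:n-cycle} applied to $S_{n-1}$, reducing everything to checking which box removals from $\zeta$ yield hooks. Your extra remarks (the ``adding a box to a hook gives a hook or near hook'' justification for part (a) and the distinctness check in part (b)) only make explicit what the paper leaves implicit.
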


\begin{proof}
	Since $H=C(n-1,n-1)$ is normal in $S_{n-1}$, we obtain by Branching Rule  $\rho_{\zeta}(H)=\bigoplus_{\zeta^-} \rho_{\zeta^-}(H) $, and by Schur's Lemma each $\rho_{\zeta^-}(H)$ is a scalar matrix. We then derive from the definition of normalized characters and the fact that they are class functions that $\rho_{\zeta^-}(H)=|H|\cdot \tilde{\chi}_{\zeta^-}(\sigma)\cdot \mathcal{I}_m$, where $\sigma$ is any $(n-1)$-cycle of $S_{n-1}$ and $\mathcal{I}_m$ is the identity matrix with dimension $m=\mathrm{dim}\rho_{\zeta^-}$. Thus the distinct eigenvalues of $\rho_{\zeta}(H)$ are $\{ |H|\cdot \tilde{\chi}_{\zeta^-}(\sigma)  \}_{\zeta^-}$, where $\tilde{\chi}_{\zeta^-}(\sigma)$ can be calculated by Lemma~\ref{lem:n-cycle}.
    \begin{enumerate}[{\rm(a)}]
    	\item When $\zeta$ is not a hook or a near hook, $\zeta^-$ is never a hook. From Lemma~\ref{lem:n-cycle}, the normalized character of $\zeta^-$ on $\sigma$ is $0$. Then the eigenvalues of $\rho_{\zeta}(H)$ are all $0$'s.
    	\item If $\zeta=(n-m,1^m)$ is a hook with $1\leq m\leq n-2$, then $\zeta^-=(n-1-m,1^m)$ or $(n-m,1^{m-1})$. By Lemma~\ref{lem:n-cycle}, we have $|H|\cdot \tilde{\chi}_{(n-1-m,1^m)}(\sigma)=(-1)^m m!(n-2-m)! $ and $|H|\cdot \tilde{\chi}_{(n-m,1^{m-1})}(\sigma)=(-1)^{m-1}(m-1)!(n-1-m)! $. Thus the distinct eigenvalues of $\rho_{\zeta}(H)$ are $(-1)^m m!(n-2-m)!$ and $(-1)^{m-1}(m-1)!(n-1-m)!$.
    	\item If $\zeta=(n-m,2,1^{m-2})$ with $2\leq m\leq n-2$, then $\zeta^-=(n-m,1^{m-1})$, $(n-m-1,2,1^{m-2})$ if $m\leq n-3$, or $(n-m,2,1^{m-3})$ if $m\ge 3$. The last two values of $\zeta^-$ attain $0$ characters on $\sigma$ by Lemma~\ref{lem:n-cycle} and $|H|\cdot \tilde{\chi}_{(n-m,1^{m-1})}(\sigma)=(-1)^{m-1}(m-1)!(n-1-m)! $. Thus the distinct eigenvalues of $\rho_{\zeta}(H)$ are $(-1)^{m-1}(m-1)!(n-1-m)!$ and $0$.
    \end{enumerate}
    This completes the proof.
\end{proof}

\begin{lem}\label{lem:C(n,n-1;r)}
Let $H=C(n,n-1;r)$, where $n\ge 7$ and $r\in \{1,2,\ldots n-2\}$, and let $\zeta\vdash n$ be such that $\zeta\ne (n)$, $(n-1,1)$, $(2,1^{n-2})$ or $(1^n)$. 
	\begin{enumerate}[{\rm(a)}]
      \item If $n$ is odd and $1\leq r\leq 2$, then $ \lambda_1(\rho_\zeta(H))\leq 2(n-2)(n-4)! $.

      \item If $n$ is odd and  $3\leq r< n/2$, then $
      \lambda_1(\rho_\zeta(H))\leq r(n-3)!$.
      \item If $n$ is odd and $n/2<r\leq n-2$, then $\lambda_1(\rho_\zeta(H))\leq  (n-r)(n-3)!$.
      \item If $n$ is even and $1\leq r\leq 2$, then $\lambda_1(\rho_\zeta(H))\leq r(n-3)!$; moreover, if in addition $\zeta\ne (n-2,1^2)$ or $(3,1^{n-3})$, then $\lambda_1(\rho_\zeta(H))< r(n-3)!$.
      \item If $n$ is even and $3\leq r\leq n-2$, then $\lambda_1(\rho_\zeta(H))\leq 2(n-r)(n-4)!$.
    \end{enumerate}
\end{lem}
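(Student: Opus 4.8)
The plan is to exploit two complementary decompositions of the connection set. Every $(n-1)$-cycle of $S_n$ fixes exactly one point, and $H=C(n,n-1;r)$ consists precisely of those $(n-1)$-cycles whose fixed point lies in $\{r+1,\dots,n\}$; hence $H=\bigcup_{i=r+1}^{n}P_i$ is a disjoint union, giving the \emph{sum decomposition} $H^+=\sum_{i=r+1}^{n}P_i^+$, and equivalently the \emph{difference decomposition} $H^+=C(n,n-1)^+-\sum_{i=1}^{r}P_i^+$. I would use the sum decomposition (with $n-r$ terms) when $r$ is large, and the difference decomposition (subtracting $r$ terms from the conjugacy-class sum) when $r$ is small; this is exactly the split reflected in the five cases, with cases (a), (b), (d) handled by the difference and (c), (e) by the sum.

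For the ingredients, recall from the paragraph preceding Lemma~\ref{lem:n-1_cycle_eigenvalues} that each $\rho_\zeta(P_i)$ is conjugate to $\rho_\zeta(C(n-1,n-1))$, so they all share the spectrum recorded there. Moreover $C(n,n-1)$ is a conjugacy class of $S_n$, so $\rho_\zeta(C(n,n-1))=c_\zeta\mathcal{I}$ is scalar by Schur's Lemma with $c_\zeta=|C(n,n-1)|\,\tilde\chi_\zeta(\sigma)=n(n-2)!\,\tilde\chi_\zeta(\sigma)$ for an $(n-1)$-cycle $\sigma$; by Lemma~\ref{lem:n-1_cycles} this scalar \emph{vanishes for every non-excluded hook} and is nonzero only for near hooks, where $c_\zeta=(-1)^{m-1}m(m-2)!(n-m)(n-m-2)!$. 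This is the key simplification: in the difference decomposition each hook contributes $\rho_\zeta(H)=-\sum_{i=1}^r\rho_\zeta(P_i)$ with no class term at all. The combining step is Weyl Inequalities together with the difference form in the Remark: the sum decomposition gives $\lambda_1(\rho_\zeta(H))\le (n-r)\,\lambda_1(\rho_\zeta(C(n-1,n-1)))$, while the difference decomposition gives $\lambda_1(\rho_\zeta(H))\le c_\zeta-r\,\lambda_{\min}(\rho_\zeta(C(n-1,n-1)))$, using that $\lambda_{\min}$ of the $r$-fold sum is at least $r$ times the single $\lambda_{\min}$.

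Both bounds reduce the problem to maximizing, over the admissible $\zeta$, the factorial quantities of Lemma~\ref{lem:n-1_cycle_eigenvalues}, namely $f(m)=m!(n-2-m)!$ and $g(m)=(m-1)!(n-1-m)!$ (and $c_\zeta$ for near hooks). Each is symmetric ($f$ under $m\mapsto n-2-m$, and $g$, $|c_\zeta|$ under $m\mapsto n-m$) and ``U-shaped'' in $m$, so its maximum over the relevant range is attained at an endpoint. Crucially, the excluded partitions $(n-1,1)$ and $(2,1^{n-2})$ are exactly the extreme hook indices $m=1$ and $m=n-2$, so removing them keeps the surviving maxima down to order $(n-3)!$ for odd $n$ (attained at $(3,1^{n-3})$ and $(2,2,1^{n-4})$) and $2(n-4)!$ for even $n$. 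Carrying out these endpoint evaluations, with attention to the parities of $m$ and $n$ that fix the signs of the eigenvalues, yields the five stated bounds. Here the conjugation symmetry $\rho_{\zeta'}(\cdot)=\mathrm{sgn}(\cdot)\rho_\zeta(\cdot)$ of Lemma~\ref{lem:transpose} (which, since all of $H$ has sign $(-1)^n$, gives $\rho_{\zeta'}(H)=\pm\rho_\zeta(H)$) roughly halves the casework and explains why the extremizers come in conjugate pairs such as $(n-2,1^2)$ and $(3,1^{n-3})$.

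The main obstacle is the near-hook analysis in the small-$r$ cases (a), (b) and (d), where $c_\zeta$ is genuinely nonzero and one must track whether it reinforces or cancels the $-r\lambda_{\min}$ term. For even $m$ the scalar $c_\zeta$ is negative and helps, yielding a strict inequality, whereas for odd $m$ the relevant $\lambda_{\min}$ vanishes and the bound degenerates to $\lambda_1(\rho_\zeta(H))\le c_\zeta$, a quantity \emph{independent of $r$}. Proving $c_\zeta\le 2(n-2)(n-4)!$, with equality only at the near hook $(2,2,1^{n-4})$ when $n$ is odd, and then comparing this against $r(n-3)!$, is precisely what separates case (a) from case (b); and the endpoint comparison $3<r(n-4)$, valid exactly once $n\ge 7$, is what produces the strict inequalities and the sharp list of extremizers $\{(n-2,1^2),(3,1^{n-3})\}$ in case (d). Pinning down these exceptional partitions, rather than merely bounding, is the delicate point and is where the hypothesis $n\ge 7$ is genuinely needed.
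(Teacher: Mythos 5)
Your proposal is correct and follows essentially the same route as the paper: the same two decompositions $H=\dot\cup_{i=r+1}^{n}P_i$ and $H=C(n,n-1)\setminus(\dot\cup_{i=1}^{r}P_i)$, the same Weyl-inequality bounds, and the same eigenvalue data obtained via Branching Rule (Lemmas~\ref{lem:n-1_cycle_eigenvalues} and \ref{lem:n-1_cycles}), including the correct identification of the extremal partitions $(3,1^{n-3})$, $(2,2,1^{n-4})$ and $(n-2,1^2)$. The only cosmetic difference is that the paper evaluates both bounds for every $\zeta$ and takes their minimum at the end, whereas you assign one decomposition to each of the five cases; since that minimum coincides exactly with your assignment, the arguments are the same.
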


\begin{proof}
    Recall $P_i=C(n,n-1)\cap G_{i}$ for every $i\in [n]$. Then $P_i$ is closed under inverse and conjugation in $\mathrm{Sym}_{[n]\setminus\{i\}}$ and these $P_i$'s are mutually disjoint. For any $\zeta\vdash n$ and any $i\in [n]$, the matrix $\rho_\zeta(P_i)$ is symmetric, guaranteed by Lemma~\ref{lem:symmetric}, and shares the same eigenvalues with $\rho_\zeta(C(n-1,n-1))$ by the analysis above Lemma~\ref{lem:n-1_cycle_eigenvalues}.

	We decompose $H$ into $n-r$ parts as $H=\dot\cup_{i=r+1}^n P_i$. Then for any $\zeta\vdash n$ we have $\rho_\zeta(H)=\sum_{i=r+1}^n \rho_\zeta(P_i)$  and by Weyl Inequalities, the following bound for $\lambda_1(\rho_\zeta(H))$ holds:
	\begin{gather}
		\lambda_1(\rho_\zeta(H))\leq (n-r)\lambda_1(\rho_\zeta(C(n-1,n-1))). \label{eq:max_weyl_Inequalities_for_n-1} 
	\end{gather}
      Now substitute the results of Lemma~\ref{lem:n-1_cycle_eigenvalues} into \eqref{eq:max_weyl_Inequalities_for_n-1}.
 	\begin{itemize}
		\item[$\bullet$]  If $\zeta$ is not a hook or a near hook, then $\lambda_1(\rho_\zeta(H))\leq 0$.
		\item[$\bullet$] If $\zeta=(n-m,1^m)$ with $2\leq m\leq n-3$ and $m$ is even, then \[\lambda_1(\rho_\zeta(H))\leq (n-r)m!(n-2-m)!\leq  \begin{cases}
			(n-r)(n-3)!, \text{ if } n \text{ is odd};\\
			2(n-r)(n-4)!,\text{ if } n \text{ is even}.
		\end{cases} \]

        \item[$\bullet$] If $\zeta=(n-m,1^m)$ with $2\leq m\leq n-3$ and $m$ is odd, then $\lambda_1(\rho_\zeta(H))\leq (n-r)(m-1)!(n-1-m)!\leq 2(n-r)(n-4)!$. 

        \item[$\bullet$] If $\zeta=(n-m,2,1^{m-2})$ with $2\leq m\leq n-2$ and $m$ is even, then $\lambda_1(\rho_\zeta(H))\leq 0$. 

        \item[$\bullet$] If $\zeta=(n-m,2,1^{m-2})$ with $2\leq m\leq n-2$ and $m$ is odd, then \[\lambda_1(\rho_\zeta(H))\leq (n-r)(m-1)!(n-1-m)!\leq \begin{cases}
        	(n-r)(n-3)!,\text{ if }n \text{ is odd};\\
        	2(n-r)(n-4)!,\text{ if }n\text{ is even}.
        \end{cases} \] 



	\end{itemize}
	To sum up, when $n$ is odd, the largest eigenvalue of $\rho_\zeta(H)$ is no larger than $(n-r)(n-3)!$; when $n$ is even, the largest eigenvalue of $\rho_\zeta(H)$ is no larger than $2(n-r)(n-4)!$. This completes the proof of (c) and (e).

    We now express $H$ as $H=C(n,n-1)\setminus (\dot\cup_{i=1}^r P_i)$. Then for any $\zeta\vdash n$, we have $\rho_\zeta(H)=\rho_\zeta(C(n,n-1))-\sum_{i=1}^i \rho_\zeta(P_i)$.  By Weyl Inequalities, we have the following bound for $\lambda_1(\rho_\zeta(H))$:
    \begin{gather}
		\lambda_1(\rho_\zeta(H))\leq \lambda_1(\rho_\zeta(C(n,n-1)))-r \cdot \lambda_{min}(\rho_\zeta(C(n-1,n-1))) . \label{eq:max_weyl_Inequalities_r=1} 
	\end{gather}
    As $C(n,n-1)$ is a conjugacy class of $S_n$, the matrix $\rho_\zeta(C(n,n-1))$ is a scalar matrix and the unique eigenvalue of $\rho_\zeta(C(n,n-1))$ is given by $|C(n,n-1)|\cdot \tilde{\chi}_{\zeta}(\sigma)$, where $\sigma$ is any element in $C(n,n-1)$ and $\tilde{\chi}_{\zeta}(\sigma)$ can be calculated by Lemma~\ref{lem:n-1_cycles}.  Substituting Lemma~\ref{lem:n-1_cycle_eigenvalues} into \eqref{eq:max_weyl_Inequalities_r=1} gives us the following results: 
	\begin{itemize}
		\item[$\bullet$] Suppose $\zeta$ is not a hook or a near hook. The eigenvalues of $\rho_\zeta(C(n,n-1))$ and $\rho_\zeta(C(n-1,n-1))$ are all $0$. Thus from \eqref{eq:max_weyl_Inequalities_r=1} we derive that $\lambda_1(\rho_\zeta(H))\leq 0$.

		\item[$\bullet$] If $\zeta=(n-m,1^m)$ with $2\leq m\leq n-3$ and $m$ is even, then all eigenvalues of $\rho_\zeta(C(n,n-1))$ are $0$ and thus $\lambda_1(\rho_\zeta(H))\leq r\cdot (m-1)!(n-1-m)! \leq r(n-3)! $. If in addition $\zeta\ne(n-2,1^2)$, then $\lambda_1(\rho_\zeta(H))<r(n-3)!$.

		

		\item[$\bullet$] If $\zeta=(n-m,1^m)$ with $2\leq m\leq n-3$ and $m$ is odd, then we have \[\lambda_1(\rho_\zeta(H))\leq r\cdot m!(n-2-m)!\leq \begin{cases}
		2r(n-4)!,&\text{ if }n\text{ is odd};\\
		r(n-3)!,&\text{ if }n\text{ is even}.	
		\end{cases}  \] Here if in addition $n$ is even and $\zeta\ne(3,1^{n-3})$, then $\lambda_1(\rho_\zeta(H))<r(n-3)!$.


        \item[$\bullet$] If $\zeta=(n-m,2,1^{m-2})$ with $2\leq m\leq n-2$ and $m$ is even, then the unique eigenvalue of $\rho_\zeta(C(n,n-1))$ is $-(n-m)(n-2-m)!m(m-2)!$ and thus $\lambda_1(\rho_\zeta(H))\leq -(n-m)(n-2-m)!m(m-2)!  +r\cdot (m-1)!(n-1-m)! < (r-1)\cdot (m-1)!(n-1-m)!<(r-1)(n-3)!$. 

        \item[$\bullet$] If $\zeta=(n-m,2,1^{m-2})$ with $2\leq m\leq n-2$ and $m$ is odd, then the unique eigenvalue of $\rho_\zeta(C(n,n-1))$ is $(n-m)(n-2-m)!m(m-2)!$ and thus \[\lambda_1(\rho_\zeta(H))\leq (n-m)(n-2-m)!m(m-2)\leq \begin{cases}
        	 2(n-2)(n-4)!,&\text{ if }n\text{ is odd};\\
        	 3(n-3)(n-5)!,&\text{ if }n\text{ is even}.
        \end{cases}  \]

         


	\end{itemize}
	If $n$ is odd and $1\leq r\leq 2$, then $\lambda_1(\rho_\zeta)\leq \min\{(n-r)(n-3)!,2(n-2)(n-4)!\}=2(n-2)(n-4)!$.
	If $n$ is odd and $3\leq r<n/2$, then $\lambda_1(\rho_\zeta)\leq \min\{(n-r)(n-3)!,r(n-3)!\}=r(n-3)!$.  If $n$ is even and $1\leq r\leq 2$, then $\lambda_1(\rho_\zeta)\leq \min\{2(n-r)(n-4)!,r(n-3)!\}=r(n-3)!$; moreover, if in addition $\zeta\ne (n-2,1^2)$ or $(3,1^{n-3})$, then $\lambda_1(\rho_\zeta)<r(n-3)!$.
\end{proof}

\begin{thm}\label{thm:n-1_odd}
Let $\Gamma=\mathrm{Cay}(S_n,C(n,n-1;r))$ with $n\ge 5$ odd and $r\in \{1,2,\ldots n-2\}$. Then $\Gamma$ is connected and bipartite with $\lambda_2(\Gamma)=\alpha_2(\Gamma)$ such that the following statements hold:
\begin{enumerate}[{\rm (a)}]
    \item If $n=5$ and $r=1$,  then $\lambda_2(\Gamma)=6$ with multiplicity $14$, attained exactly by $(2,1^3)$ and $(2,2,1)$.
    \item If $n\ge 7$ and $r=1$, then $\lambda_2(\Gamma)=(n-2)!$ with multiplicity $n-1$, attained uniquely by $(2,1^{n-2})$.
    \item If $2\leq r<n/2$, then $\lambda_2(\Gamma)=(n-r)(n-3)!$ with multiplicity $(n-1)(r-1)$, attained uniquely by $(2,1^{n-2})$.
	\item If $n/2<r<n-1$, then $\lambda_2(\Gamma)=r(n-3)!$ with multiplicity $(n-1)(n-r-1)$, attained uniquely by $(n-1,1)$.
\end{enumerate}

\end{thm}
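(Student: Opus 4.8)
The plan is to combine the spectral decomposition of Proposition~\ref{prop1.1} with the explicit spectra of two distinguished Specht modules and the uniform bounds of Lemma~\ref{lem:C(n,n-1;r)}. First I would dispose of the structural claims. Since $n$ is odd, $k=n-1$ is even, so Lemma~\ref{lem:generating_subgroup} gives $\langle C(n,n-1;r)\rangle=S_n$; hence $\Gamma$ is connected, its largest eigenvalue $|H|$ (where $H=C(n,n-1;r)$) is simple, and therefore $\lambda_2(\Gamma)=\alpha_2(\Gamma)$. Moreover every $(n-1)$-cycle has sign $(-1)^{n-2}=-1$, so $\mathrm{sgn}$ is a proper $2$-colouring of $\Gamma$ (each edge $\{g,gh\}$ with $h\in H$ reverses the sign), whence $\Gamma$ is bipartite and its spectrum is symmetric about $0$ with least eigenvalue $-|H|$. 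By Proposition~\ref{prop1.1}, $\alpha_2(\Gamma)$ is the largest value, strictly below $|H|$, occurring among the matrices $\rho_\zeta(H)$ as $\zeta$ ranges over $\widehat{S_n}\setminus\{(n)\}$, and its multiplicity as an eigenvalue of $\Gamma$ is $\sum_\zeta d_\zeta\,m_\zeta$, where $m_\zeta$ is its multiplicity in $\rho_\zeta(H)$.

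The crucial point is that the analysis collapses onto two Specht modules. Because $(2,1^{n-2})'=(n-1,1)$ and $\mathrm{sgn}(h)=-1$ for all $h\in H$, Lemma~\ref{lem:transpose} yields $\rho_{(2,1^{n-2})}(H)=-\rho_{(n-1,1)}(H)$, so the spectrum of the conjugate module is exactly the negation of the one recorded in Lemma~\ref{lem:multiplicity_n-1}. Thus for $2\le r\le n-2$ the standard module contributes the values $r(n-3)!$, $(2r-n)(n-3)!$, $(r-n)(n-3)!$ (with multiplicities $n-r-1,1,r-1$), while $(2,1^{n-2})$ contributes their negatives $-r(n-3)!$, $(n-2r)(n-3)!$, $(n-r)(n-3)!$; for $r=1$ the two modules contribute $\{(n-3)!,-(n-2)!\}$ and $\{-(n-3)!,(n-2)!\}$. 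In particular the largest value coming from these two modules is $r(n-3)!$ when $r>n/2$, and is $(n-r)(n-3)!$ (respectively $(n-2)!$ when $r=1$) when $r<n/2$. This sign-flip, governed by whether $n-r$ or $r$ is larger, is exactly what makes the standard representation $(n-1,1)$ win in case~(d) but lose to its conjugate in cases~(b),(c).

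It then remains to show that no other $\zeta$ overtakes these candidates, and for $n\ge 7$ I would invoke Lemma~\ref{lem:C(n,n-1;r)}. In case~(d) ($n/2<r\le n-2$) part~(c) bounds every remaining $\rho_\zeta(H)$ by $(n-r)(n-3)!<r(n-3)!$, so $\alpha_2(\Gamma)=r(n-3)!$ is attained only by $(n-1,1)$, with multiplicity $(n-1)(n-r-1)$. In case~(c) ($2\le r<n/2$) parts~(b)/(a) bound the remaining modules by $r(n-3)!$ (for $3\le r<n/2$) or $2(n-2)(n-4)!$ (for $r=2$), both strictly below $(n-r)(n-3)!$, so $\alpha_2(\Gamma)=(n-r)(n-3)!$ is attained only by $(2,1^{n-2})$, with multiplicity $(n-1)(r-1)$. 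In case~(b) ($r=1$) part~(a) gives $2(n-2)(n-4)!<(n-2)!$, so $\alpha_2(\Gamma)=(n-2)!$ is attained only by $(2,1^{n-2})$, with multiplicity $n-1$. Each comparison reduces to the elementary inequalities $n-r<r$, $r<n-r$, and $2(n-2)(n-4)!<(n-2)!$, the last holding because $n-3>2$ for odd $n\ge 7$; in every case one also checks the candidate is genuinely below $|H|=(n-r)(n-2)!$.

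The main obstacle is the base case $n=5$, where Lemma~\ref{lem:C(n,n-1;r)} does not apply and its governing bound degenerates: $2(n-2)(n-4)!=(n-2)!$ precisely when $n=5$, so the near hook $(2,2,1)$ is free to tie with the hook $(2,1^{n-2})$. This is exactly what produces the anomaly in case~(a): for $n=5$, $r=1$ both $(2,1^3)$ and $(2,2,1)$ attain the value $6=(n-2)!$, giving combined multiplicity $4\cdot 1+5\cdot 2=14$. I would therefore treat $n=5$ by directly diagonalising the relevant matrices $\rho_\zeta(H)$ over the partitions of $5$ (the self-conjugate $(3,1^2)$ contributes only zeros by the sign argument, and $(3,2)=(2,2,1)'$ contributes negatives), verifying that for $r=2,3$ the near-hook contribution falls strictly below the candidate so that the general pattern of~(c),(d) persists, while for $r=1$ the extra multiplicity-$2$ contribution of $(2,2,1)$ is precisely what case~(a) records.
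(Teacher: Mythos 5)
Your proposal follows essentially the same route as the paper's proof: connectivity and bipartiteness from the parity of $(n-1)$-cycles, the explicit spectra of $\rho_{(n-1,1)}(H)$ and $\rho_{(2,1^{n-2})}(H)$ via Lemmas~\ref{lem:multiplicity_n-1} and~\ref{lem:transpose}, the bounds of Lemma~\ref{lem:C(n,n-1;r)} to eliminate every other partition when $n\ge 7$, and Proposition~\ref{prop1.1} for the multiplicities; your case distinctions and the inequalities you check are exactly the paper's (the paper settles $n=5$ by a \textsc{Magma} computation, where you propose hand diagonalisation --- the same idea).

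One claim in your $n=5$ discussion is, however, wrong, although it does not damage the conclusion. You assert that the self-conjugate partition $(3,1^2)$ ``contributes only zeros by the sign argument.'' Since every element of $H$ is odd, Lemma~\ref{lem:transpose} applied to the self-conjugate $(3,1^2)$ only gives that $\rho_{(3,1^2)}(H)$ is similar to $-\rho_{(3,1^2)}(H)$, i.e.\ that its spectrum is symmetric about $0$ --- not that it vanishes. Indeed, for $r=1$ write $H=C(5,4)\setminus P_1$ with $P_1=C(5,4)\cap G_1$. By Lemma~\ref{lem:n-1_cycles} the hook $(3,1^2)$ has zero character on $4$-cycles of $S_5$, so $\rho_{(3,1^2)}(C(5,4))=0$; but the branching $(3,1^2)\downarrow_{S_4}\cong S^{(2,1^2)}\oplus S^{(3,1)}$ together with Lemma~\ref{lem:n-cycle} gives $\rho_{(3,1^2)}(P_1)$ the eigenvalues $2$ and $-2$, each with multiplicity $3$. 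Hence $\rho_{(3,1^2)}(H)=-\rho_{(3,1^2)}(P_1)$ has spectrum $\{2,2,2,-2,-2,-2\}$, not $\{0\}$. Since $2<6$, case (a) of the theorem is unaffected, and the direct diagonalisation you propose would catch this in any event, but the shortcut as stated is invalid and should be removed or replaced by the computation above.
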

\begin{proof}
	Let $H=C(n,n-1;r)$ with $1\leq r\leq n-2$. Since $n$ is odd, all permutations in $H$ are odd and thus $\Gamma=\mathrm{Cay}(S_n,H)$ is a connected regular bipartite graph with $\lambda_2(\Gamma)=\alpha_2(\Gamma)$. When $n=5$ and $1\leq r\leq n-2$, we verify $\lambda_2(\Gamma)$ by computation in \textsc{Magma} \cite{BCP}. For the remainder of this proof, suppose $n\ge 7$. 

	It is clear that if $\zeta=(n)$, then matrix $\rho_\zeta(H)$ has only one eigenvalue, which is $|H|=(n-r)(n-2)!$; if $\zeta=(1^n)$, then $\rho_\zeta$ is the sign representation and sends $H$ to $-|H|$. With $\Gamma$ a connected regular bipartite graph, we know that $|H|$ and $-|H|$ are the simple largest and smallest eigenvalues of $\Gamma$, respectively. 

    By Lemma~\ref{lem:multiplicity_n-1}, we get $\lambda_1(\rho_{(n-1,1)}(H))=r(n-3)!$ with multiplicity $a:=n-r-1$   and \[\lambda_{\min}(\rho_{(n-1,1)}(H))=\begin{cases}
    -(n-r)(n-3)!,&\text{ if }2\leq r\leq n-2;\\
    -(n-2)!,&\text{ if }r=1	
    \end{cases}  \] with multiplicity \[b:=\begin{cases}
    	r-1,&\text{ if }2\leq r\leq n-2;\\
    	1,&\text{ if }r=1.
    \end{cases}\] Since Lemma~\ref{lem:transpose} implies that $\rho_{(2,1^{n-2})}$ only differs from $\rho_{(n-1,1)}$ at odd permutations by the sign, we have  $\rho_{(2,1^{n-2})}(H)=-\rho_{(n-1,1)}(H)$, which implies \[\lambda_1(\rho_{(2,1^{n-2})}(H))=\begin{cases}
    (n-r)(n-3)!,&\text{ if }2\leq r\leq n-2;\\
    (n-2)!,&\text{ if }r=1	
    \end{cases} \] with multiplicity $b$ and $\lambda_{\min}(\rho_{(2,1^{n-2})}(H))=-r(n-3)!$ with multiplicity $a$. 

    First suppose $r=1$ or $2$. From Lemma~\ref{lem:C(n,n-1;r)} (a) we know that if $\zeta\vdash n$ and $\zeta\ne (n),(1^n),(n-1,1)$ or $(2,1^{n-2})$, then $\lambda_{1}(\rho_\zeta(H))\leq 2(n-2)(n-4)!$, which is strictly smaller than $(n-2)!$. Thus $\lambda_2(\Gamma)=(n-2)!$, attained uniquely by $(2,1^{n-2})$. 

    Next suppose $3\leq r<n/2$. We have verified in Lemma~\ref{lem:C(n,n-1;r)} (b) that if $\zeta\vdash n$ and $\zeta\ne (n)$, $(n-1,1)$, $(2,1^{n-2}) $ or $(1^n)$, then $\lambda_1(\rho_\zeta(H))\leq r(n-3)!$, which is strictly smaller than $(n-r)(n-3)!$. Thus $\lambda_2(\Gamma)=(n-r)(n-3)!$, attained uniquely by $(2,1^{n-2})$.

     Now suppose $n/2< r\leq n-2$. We have verified in Lemma~\ref{lem:C(n,n-1;r)} (c) that if $\zeta\vdash n$ and $\zeta\ne (n)$, $(n-1,1)$, $(2,1^{n-2}) $ or $(1^n)$, then $\lambda_1(\rho_\zeta(H))\leq (n-r)(n-3)!$, which is strictly smaller than $r(n-3)!$. Thus $\lambda_2(\Gamma)=r(n-3)!$, attained uniquely by $(n-1,1)$. 

	 Finally, we apply Proposition~\ref{prop1.1} to derive the conclusion on the multiplicity of $\lambda_2(\Gamma)$. If $r=1$, then $\lambda_2(\Gamma)=(n-2)!$ is attained uniquely by $(2,1^{n-2})$ and has  multiplicity $d_{(2,1^{n-2})}\cdot b=n-1$.  If $2\leq r<n/2$, then $\lambda_2(\Gamma)=(n-r)(n-3)!$ is attained uniquely by $(2,1^{n-2})$ and has multiplicity $d_{(2,1^{n-2})}\cdot b=(n-1)(r-1)$.  If $n/2< r\leq n-2$, then $\lambda_2(\Gamma)=r(n-3)!$ is attained uniquely by $(n-1,1)$ and has multiplicity  $d_{(n-1,1)}\cdot a=(n-1)(n-r-1)$. 
\end{proof}

The following corollary is about the two smallest eigenvalues of $\mathrm{Cay}(S_n,C(n,n-1;r))$ with odd $n$. 
\begin{cor}\label{cor:odd_small}
  Let $\Gamma=\mathrm{Cay}(S_n,C(n,n-1;r))$ with $n\ge 5$ odd and $r\in \{1,2,\ldots,n-2\}$. The smallest eigenvalue of $\Gamma$ is $-(n-r)(n-2)!$, which is simple and attained  uniquely by $(1^n)$.
\begin{enumerate}[{\rm (a)}]
	\item If $n=5$ and $r=1$,  then the second smallest eigenvalue of $\Gamma$ is $-6$ with multiplicity $14$, attained exactly by $(4,1)$ and $(3,2)$.
    \item If $n\ge 7$ and $r=1$, then the second smallest eigenvalue of $\Gamma$ is $-(n-2)!$ with multiplicity $n-1$, attained uniquely by $(n-1,1)$.
    \item If $2\leq r<n/2$, then  the second smallest eigenvalue of $\Gamma$ is  $-(n-r)(n-3)!$ with multiplicity $(n-1)(r-1)$, attained uniquely by $(n-1,1)$.
	\item If $n/2<r<n-1$, then the second smallest eigenvalue of $\Gamma$ is $-r(n-3)!$ with multiplicity $(n-1)(n-r-1)$, attained uniquely by $(2,1^{n-2})$.
\end{enumerate}
\end{cor}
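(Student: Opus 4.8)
The plan is to derive the corollary directly from Theorem~\ref{thm:n-1_odd} by exploiting the fact that, for odd $n$, the spectrum of $\Gamma$ is symmetric about $0$. First I would record that since $n$ is odd, every $(n-1)$-cycle in $H=C(n,n-1;r)$ is an odd permutation, so $\mathrm{sgn}(h)=-1$ for all $h\in H$. Combined with Lemma~\ref{lem:transpose}, this gives $\rho_{\zeta'}(H)=\mathrm{sgn}(\cdot)\rho_\zeta(H)=-\rho_\zeta(H)$ for every $\zeta\vdash n$, and hence $\mathrm{Spec}(\rho_{\zeta'}(H))=-\mathrm{Spec}(\rho_\zeta(H))$. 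Because conjugation $\zeta\mapsto\zeta'$ is an involution on the partitions of $n$ with $d_{\zeta'}=d_\zeta$, feeding this into Proposition~\ref{prop1.1} shows that the full multiset $\mathrm{Spec}(\Gamma)$ is invariant under negation; equivalently, $\lambda_i(\Gamma)=-\lambda_{n!+1-i}(\Gamma)$ for every $i$. (This is of course just the statement that a bipartite graph has a symmetric spectrum, guaranteed here by Lemma~\ref{lem:generating_subgroup}, but the transpose relation is what lets me track the \emph{attaining} representations.)

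From this symmetry the two smallest eigenvalues are read off from the two largest. The largest eigenvalue is the simple value $|H|=(n-r)(n-2)!$, since $\Gamma$ is connected and regular, attained by the trivial representation $(n)$; negating, the smallest eigenvalue is $-(n-r)(n-2)!$, again simple, attained uniquely by $(n)'=(1^n)$, the sign representation. In the same way the second smallest eigenvalue equals $-\alpha_2(\Gamma)$, has the same multiplicity as $\alpha_2(\Gamma)$, and is attained exactly by the conjugate(s) of the partition(s) attaining $\alpha_2(\Gamma)$ in Theorem~\ref{thm:n-1_odd}.

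It then remains to transcribe the four cases of Theorem~\ref{thm:n-1_odd} under conjugation, using $(2,1^{n-2})'=(n-1,1)$ and $(n-1,1)'=(2,1^{n-2})$, together with $(2,1^{3})'=(4,1)$ and $(2,2,1)'=(3,2)$ for the base case $n=5$. This yields the claimed second smallest eigenvalues $-(n-2)!$ in case (b), $-(n-r)(n-3)!$ in case (c) and $-r(n-3)!$ in case (d), each with the stated attaining partition and with multiplicity unchanged from the theorem, and it settles case (a) directly. I do not expect a genuine obstacle: the only point requiring care is to confirm that negation sends the \emph{strictly} second largest eigenvalue to precisely the second smallest, i.e.\ that no eigenvalue of $\Gamma$ lies strictly between $-|H|$ and $-\alpha_2(\Gamma)$. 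This is immediate once the global symmetry of the spectrum is established and the simplicity of $\pm|H|$ is in hand, so the corollary follows with essentially no further computation.
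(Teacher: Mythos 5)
Your proposal is correct and follows essentially the same route as the paper's own proof: both use Lemma~\ref{lem:transpose} (with all elements of $C(n,n-1;r)$ odd for odd $n$) to get $\rho_{\zeta'}(H)=-\rho_\zeta(H)$, combine this with $d_{\zeta'}=d_\zeta$ and Proposition~\ref{prop1.1} to see the spectrum is symmetric about $0$ with conjugation tracking the attaining partitions, and then read the corollary off from Theorem~\ref{thm:n-1_odd}. The only cosmetic difference is that the paper invokes bipartiteness for the spectral symmetry and the Hook Formula for $d_{\zeta'}=d_\zeta$, whereas you derive the symmetry directly from the transpose relation; the substance is identical.
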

\begin{proof}
Note that $\Gamma$ is a connected bipartite graph when $n$ is odd. Thus the spectrum  of $\Gamma$ is symmetric about $0$, that is, if $\lambda$ is an eigenvalue of $\Gamma$ with multiplicity $m_\lambda$, then $-\lambda$ is also an eigenvalue of $\Gamma$ with multiplicity $m_\lambda$. Since $n$ is odd, all the permutations in $C(n,n-1;r)$ are odd and thus by Lemma~\ref{lem:transpose}, $\rho_{\zeta'}(H)=-\rho_\zeta(H)$ for any $\zeta\vdash n$. Hence $\lambda$ is an eigenvalue of $\rho_\zeta(H)$ with multiplicity $m_\zeta^\lambda$ if and only if $-\lambda$ is an eigenvalue of $\rho_{\zeta'}(H)$ with multiplicity $m_\zeta^\lambda$. Note also that $\rho_\zeta$ and $\rho_{\zeta'}$ have the same dimension by Lemma~\ref{lem:Hook}. Now this corollary follows from Theorem~\ref{thm:n-1_odd} and Proposition~\ref{prop1.1}.
\end{proof}

\begin{thm}\label{thm:n-1_even}
  Let $\Gamma=\mathrm{Cay}(S_n,C(n,n-1;r))$ with $n\ge 6$ even and $r\in\{1,2,\ldots,n-2\}$. 
  \begin{enumerate}[{\rm (a)}]
  	\item If $n=6$ and $r=1$, then $\alpha_2(\Gamma)=9$ with multiplicity $160$, attained uniquely by $(3,2,1)$.  
  	\item If $(n,r)\ne(6,1)$, then $\alpha_2(\Gamma)=r(n-3)!$. Moreover, the following statements hold: 
          \begin{enumerate}[{\rm (b.1)}]
          	\item If $r\leq 2$, then $\alpha_2(\Gamma)$ is attained by $(n-1,1)$ and $(2,1^{n-2})$, and can only be attained by $(n-1,1)$, $(2,1^{n-2})$, $(n-2,1^{2})$ and $(3,1^{n-3})$.
          	\item If $3\leq r\leq n-2$, then $\alpha_2(\Gamma)$ has multiplicity $2(n-1)(n-r-1)$ and is attained exactly by $(n-1,1)$ and $(2,1^{n-2})$.
          \end{enumerate}
  \end{enumerate}
\end{thm}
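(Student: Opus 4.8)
The plan is to mirror the argument for the odd case (Theorem~\ref{thm:n-1_odd}) while exploiting the decisive parity difference: since $n$ is even, $k=n-1$ is odd, so every $(n-1)$-cycle in $H=C(n,n-1;r)$ is an even permutation. By Lemma~\ref{lem:generating_subgroup} this forces $\langle H\rangle=A_n$, so $\Gamma$ is disconnected into two components each isomorphic to $\mathrm{Cay}(A_n,H)$. Consequently the degree $|H|=(n-r)(n-2)!$ is the largest eigenvalue with multiplicity $2$, contributed by the trivial and sign representations (both send $H^+$ to $|H|$ because every $h\in H$ has $\mathrm{sgn}(h)=1$), and $\alpha_2(\Gamma)=\lambda_3(\Gamma)$. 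The second consequence of $H\subseteq A_n$ is that Lemma~\ref{lem:transpose} gives $\rho_{\zeta'}(H)=\mathrm{sgn}(\cdot)\rho_\zeta(H)=\rho_\zeta(H)$ for every $\zeta\vdash n$, so conjugate partitions produce identical spectra; in particular $(n-1,1)$ and $(2,1^{n-2})$ yield the same eigenvalues, which is exactly why they will jointly attain $\alpha_2$.

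First I would fix the contribution of the standard representation. By Lemma~\ref{lem:multiplicity_n-1} the largest eigenvalue of $\rho_{(n-1,1)}(H)$ is $r(n-3)!$ with multiplicity $n-r-1$, and the same value appears for $(2,1^{n-2})$. A one-line estimate gives $r(n-3)!<(n-r)(n-2)!=|H|$ (since $r\le n-2$ and $n-r\ge2$ imply $r<(n-r)(n-2)$), so $r(n-3)!$ is a genuine candidate for the strictly second largest eigenvalue. It then remains to bound the top eigenvalue of every remaining $\rho_\zeta(H)$ with $\zeta\notin\{(n),(1^n),(n-1,1),(2,1^{n-2})\}$, for which I would invoke Lemma~\ref{lem:C(n,n-1;r)}.

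Because that lemma requires $n\ge7$, the argument splits at $n=6$. For $n=6$ the lemma is unavailable, and in fact its conclusion fails: the near hook $(3,2,1)$ produces the eigenvalue $9>r(n-3)!=6$ when $r=1$, which is the source of exception (a). I would therefore settle all four values $r\in\{1,2,3,4\}$ for $n=6$ by a \textsc{Magma} computation, obtaining statement (a) together with the $n=6$ instances of (b). For even $n\ge8$ the lemma does the work. When $3\le r\le n-2$ I would apply part~(e), which bounds every relevant $\lambda_1(\rho_\zeta(H))$ by $2(n-r)(n-4)!$; the elementary inequality $2(n-r)(n-4)!<r(n-3)!$ (equivalent to $r>2n/(n-1)$, hence valid for all $r\ge3$ since $2n/(n-1)\le 2.4$) places these strictly below $r(n-3)!$, giving (b.2) with the value attained exactly by $(n-1,1)$ and $(2,1^{n-2})$. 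The multiplicity $2(n-1)(n-r-1)$ then follows from Proposition~\ref{prop1.1} by summing $d_{(n-1,1)}(n-r-1)+d_{(2,1^{n-2})}(n-r-1)=2(n-1)(n-r-1)$. When $r\le2$ I would instead apply part~(d), which yields $\lambda_1(\rho_\zeta(H))\le r(n-3)!$ with strict inequality unless $\zeta\in\{(n-2,1^2),(3,1^{n-3})\}$, thereby establishing (b.1).

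The main obstacle is precisely the borderline conjugate pair $(n-2,1^2)$ and $(3,1^{n-3})$ in the case $r\le2$. The Weyl-inequality bounds underlying Lemma~\ref{lem:C(n,n-1;r)}(d) are tight for these two partitions, so they deliver only the upper estimate $\lambda_1\le r(n-3)!$; deciding whether equality actually holds would require computing the exact top eigenvalue of $\rho_{(n-2,1^2)}(H)$, which the present branching-plus-Weyl machinery does not resolve. This is why (b.1) can only list these as possible additional contributors and leaves the multiplicity undetermined in Table~\ref{tab:main}, in sharp contrast to the fully determined answer for $r\ge3$.
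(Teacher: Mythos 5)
Your proposal is correct and follows essentially the same route as the paper's proof: the degree eigenvalue of multiplicity $2$ from $(n)$ and $(1^n)$, Lemma~\ref{lem:multiplicity_n-1} plus Lemma~\ref{lem:transpose} to get $r(n-3)!$ from the conjugate pair $(n-1,1)$ and $(2,1^{n-2})$, \textsc{Magma} for $n=6$, Lemma~\ref{lem:C(n,n-1;r)} parts (d) and (e) for $n\ge 8$ (with the same comparison $2(n-r)(n-4)!<r(n-3)!$ for $r\ge 3$), and Proposition~\ref{prop1.1} for the multiplicity $2(n-1)(n-r-1)$. Your closing remark about why the pair $(n-2,1^2)$, $(3,1^{n-3})$ cannot be excluded when $r\le 2$ also matches the paper's remark following the theorem.
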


\begin{proof}Since $n$ is even, all permutations in $H=C(n,n-1;r)$ are even. The graph $\Gamma=\mathrm{Cay}(S_n,H)$ has two connected components and both are isomorphic to $\mathrm{Cay}(A_n,H)$. Thus the degree $|H|=(n-r)(n-2)!$ is the largest eigenvalue of $\Gamma$ with multiplicity $2$, attained by $\rho_{(n)}$ and $\rho_{(1^n)}$ simultaneously. When $n=6$ and $1\leq r\leq 4$, we verify $\alpha_2(\Gamma)$ via computation in \textsc{Magma} \cite{BCP}. Now suppose $n\ge 8$. 

    We write $a=n-r-1$. By Lemma~\ref{lem:multiplicity_n-1}, we get $\lambda_1(\rho_{(n-1,1)}(H))=r(n-3)!$ with multiplicity $a$.  Since Lemma~\ref{lem:transpose} implies that  $\rho_{(2,1^{n-2})}(H)=\rho_{(n-1,1)}(H)$, we have $\lambda_1(\rho_{(2,1^{n-2})}(H))=\lambda_1(\rho_{(n-1,1)}(H))=r(n-3)!$ with multiplicity $a$.

	First suppose $1\leq r\leq 2$. According to Lemma~\ref{lem:C(n,n-1;r)} (d), if $\zeta\vdash n$ is such that $\zeta\ne(n)$, $(n-1,1)$, $(2,1^{n-2})$ or $(1^n)$, then the eigenvalue $\lambda_1(\rho_\zeta(H))\leq r(n-3)!$, and if in addition $\zeta\ne (n-2,1^2)$ or $(3,1^{n-3})$, then $\lambda_1(\rho_\zeta(H))< r(n-3)!$. Thus we conculde that  $\alpha_2(\Gamma)=r(n-3)!$, which is attained by $(n-1,1)$ and $(2,1^{n-2})$, and can only be attained by $(n-1,1)$, $(2,1^{n-2})$, $(n-2,1^2)$ and $(3,1^{n-3})$.

	Next suppose $3\leq r\leq n-2$. Lemma~\ref{lem:C(n,n-1;r)} (e) implies that, when $\zeta\vdash n$ such that $\zeta\ne(n)$, $(n-1,1)$, $(2,1^{n-2})$ or $(1^n)$, the eigenvalue $\lambda_1(\rho_\zeta(H))\leq 2(n-r)(n-4)!<r(n-3)!$. Thus we conclude that $\alpha_2(\Gamma)=r(n-3)!$, which is attained exactly by $(n-1,1)$ and  $(2,1^{n-2})$, and that its multiplicity equals $d_{(n-1,1)}\cdot a+d_{(2,1^{n-2})}\cdot a=2(n-1)(n-r-1)$ by Proposition~\ref{prop1.1}.
    \end{proof}

\begin{re}
	In item (b.1) of the above theorem, we cannot rule out $(n-2,1^2)$ and $(3,1^{n-3})$ for attaining $\alpha_2(\mathrm{Cay}(S_n,C(n,n-1;r)))$. For example,  the strictly second largest eigenvalue of $\mathrm{Cay}(S_6,C(6,5;2))$ is $12$ with multiplicity $50$, attained exactly by 
	$(5,1)$, $(4,1^2)$,$(3,1^3)$ and $(2,1^{4})$;
	 for $\mathrm{Cay}(S_8,C(8,7;r))$ with $r=1$ or $2$, its strictly second largest eigenvalue is attained exactly by 
     $(7,1)$, $(6,1^2)$, $(3,1^5)$ and $(2,1^6)$.
	 However, we also cannot confirm that $(n-2,1^2)$ and $(3,1^{n-3})$ always attain $\alpha_2(\mathrm{Cay}(S_n,C(n,n-1;r)))$ in the case of (b.1).
\end{re}

\begin{lem}\label{lem:n-1_even_smallest}
Let $H=C(n,n-1;r)$, where $n\ge 7$ and $r\in \{1,2,\ldots n-2\}$, and let $\zeta\vdash n$ be such that $\zeta\ne (n)$, $(n-1,1)$, $(2,1^{n-2})$ or $(1^n)$. The following bounds for $\lambda_{\min}(\rho_\zeta(H))$ hold:\begin{enumerate}[{\rm (a)}]
\item If $n$ is odd and $1\leq r\leq 2$, then $\lambda_{\min}(\rho_\zeta(H))\ge -2(n-2)(n-4)!$.
\item If $n$ is odd and $3\leq r< n/2$, then $\lambda_{\min}(\rho_\zeta(H))\ge -r(n-3)!$.
\item If $n$ is odd and $n/2\leq r\leq n-2$, then $\lambda_{\min}(\rho_\zeta(H))\ge -(n-r)(n-3)!$.

   \item If $n$ is even and $1\leq r\leq n-3$, then $\lambda_{\min}(\rho_\zeta(H))\ge  -2(n-2)(n-4)!$.
   \item If $n$ is even and $r=n-2$, then $\lambda_{\min}(\rho_\zeta(H))\ge -(n-r)(n-3)!$; moreover, if in addition $\zeta\ne (n-2,1^2)$, $(3,1^{n-3})$, $(n-2,2)$ or $(2,2,1^{n-4})$, then $\lambda_{\min}(\rho_\zeta(H))> -(n-r)(n-3)!$.
\end{enumerate}
\end{lem}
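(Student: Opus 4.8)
The plan is to mirror the proof of Lemma~\ref{lem:C(n,n-1;r)}, but now chasing the \emph{smallest} eigenvalue rather than the largest. The two decompositions of $H=C(n,n-1;r)$ used there are available again: for large $r$ we write $H=\dot\cup_{i=r+1}^n P_i$, and for small $r$ we write $H=C(n,n-1)\setminus(\dot\cup_{i=1}^r P_i)$. Each $\rho_\zeta(P_i)$ is symmetric and cospectral with $\rho_\zeta(C(n-1,n-1))$, whose eigenvalues are listed explicitly in Lemma~\ref{lem:n-1_cycle_eigenvalues}. The Weyl Inequalities then give, from the first decomposition, the lower bound
\begin{equation*}
\lambda_{\min}(\rho_\zeta(H))\ge (n-r)\,\lambda_{\min}(\rho_\zeta(C(n-1,n-1))),
\end{equation*}
and, from the second decomposition together with the Remark following the Weyl Inequalities (applied to $C=A-B$ with $A=\rho_\zeta(C(n,n-1))$ and $B=\sum_{i=1}^r\rho_\zeta(P_i)$),
\begin{equation*}
\lambda_{\min}(\rho_\zeta(H))\ge \lambda_{\min}(\rho_\zeta(C(n,n-1)))-r\,\lambda_1(\rho_\zeta(C(n-1,n-1))).
\end{equation*}
First I would record these two inequalities and note that $\rho_\zeta(C(n,n-1))$ is scalar with eigenvalue $|C(n,n-1)|\,\tilde\chi_\zeta(\sigma)$, computable by Lemma~\ref{lem:n-1_cycles}.

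The core of the argument is then a case analysis on the shape of $\zeta$, exactly paralleling the bullet points in the proof of Lemma~\ref{lem:C(n,n-1;r)}. Using Lemma~\ref{lem:n-1_cycle_eigenvalues} one reads off $\lambda_{\min}(\rho_\zeta(C(n-1,n-1)))$ and $\lambda_1(\rho_\zeta(C(n-1,n-1)))$ in each case: $\zeta$ not a hook or near hook (all eigenvalues $0$); hooks $(n-m,1^m)$ with $m$ even or odd; and near hooks $(n-m,2,1^{m-2})$ with $m$ even or odd. For the first decomposition, the most negative value of $(n-r)\,\lambda_{\min}(\rho_\zeta(C(n-1,n-1)))$ over the admissible $\zeta$ governs the bound, and since $\lambda_{\min}$ of a hook is $-(m-1)!(n-1-m)!$ or $-m!(n-2-m)!$, the extreme is attained at the boundary values of $m$, giving the parity-dependent estimates $-(n-r)(n-3)!$ (odd $n$) and $-2(n-r)(n-4)!$ (even $n$). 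For the second decomposition one must also carry the scalar term $\lambda_{\min}(\rho_\zeta(C(n,n-1)))=\pm(n-m)(n-2-m)!\,m(m-2)!$ for near hooks, which interacts with the $-r\,\lambda_1$ term and produces the $-2(n-2)(n-4)!$ type bounds for small $r$. Taking, for each $(n,r)$ regime, the better (larger) of the two lower bounds yields (a)--(e).

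The two steps I expect to demand the most care are the even-$n$, $r=n-2$ case (e) and the sharp ``moreover'' clause there. In (e) the two decompositions give competing bounds $-(n-r)(n-3)!=-2(n-3)!$ and $-2(n-2)(n-4)!$, and one must check which dominates and verify that the claimed bound $-(n-r)(n-3)!$ is the correct envelope; the strict inequality for $\zeta\notin\{(n-2,1^2),(3,1^{n-3}),(n-2,2),(2,2,1^{n-4})\}$ then requires isolating exactly which $\zeta$ can meet the bound with equality. This forces a finer examination: near hooks $(n-m,2,1^{m-2})$ with $m$ odd contribute a positive scalar eigenvalue of $\rho_\zeta(C(n,n-1))$ that, combined with $-r\,\lambda_1(\rho_\zeta(C(n-1,n-1)))$, can reach the boundary only for the extremal $m$, and I would check these few shapes individually, using strictness of the factorial inequalities $(m-1)!(n-1-m)!<(n-3)!$ for non-extremal $m$ to discard all other $\zeta$. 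The remaining bookkeeping—matching the parity of $m$ to the sign in Lemma~\ref{lem:n-1_cycles}, and confirming that the four listed near hooks and hooks are precisely the equality candidates—is routine but must be done explicitly to justify the exception list.
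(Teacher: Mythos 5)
Your skeleton is exactly the paper's: the same two decompositions $H=\dot\cup_{i=r+1}^n P_i$ and $H=C(n,n-1)\setminus(\dot\cup_{i=1}^r P_i)$, the same two Weyl lower bounds, and the same case analysis on hooks and near hooks via Lemmas~\ref{lem:n-1_cycle_eigenvalues} and \ref{lem:n-1_cycles}. However, your stated envelope for the first decomposition is wrong, and this is not a cosmetic slip. You claim it yields $-(n-r)(n-3)!$ for odd $n$ and $-2(n-r)(n-4)!$ for even $n$; that parity pattern is the one from the \emph{largest}-eigenvalue analysis (Lemma~\ref{lem:C(n,n-1;r)}), and it does not transfer, because passing to the smallest eigenvalue swaps which eigenvalue of $\rho_\zeta(C(n-1,n-1))$ is relevant. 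Concretely, the hook $\zeta=(n-2,1^2)$ (which exists for every $n$) has $\lambda_{\min}(\rho_\zeta(C(n-1,n-1)))=-(n-3)!$, so the first decomposition gives only $-(n-r)(n-3)!$ for \emph{both} parities; indeed the same value $-(n-3)!$ occurs at $(3,1^{n-3})$, $(n-2,2)$ and $(2,2,1^{n-4})$ when $n$ is even. If one followed your stated even-$n$ envelope literally, case (e) would come out as $\max\{-2(n-r)(n-4)!,-2(n-2)(n-4)!\}=-4(n-4)!$, a bound strictly stronger than the lemma's $-(n-r)(n-3)!=-2(n-3)!$, which cannot be established (and is false at least in spirit: for $n=6$ the value $-2(n-3)!$ is actually attained by the exceptional shapes). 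Your later discussion of (e) silently reverts to the correct bounds, so the write-up is internally inconsistent; the correct statement must be fixed before the regime-by-regime maximization in (a)--(e) makes sense.

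The second issue is the ``moreover'' clause in (e): you locate the equality analysis in the second decomposition, via near hooks $(n-m,2,1^{m-2})$ with $m$ odd and the positive scalar eigenvalue of $\rho_\zeta(C(n,n-1))$. That is the wrong place. For those shapes $\lambda_{\min}(\rho_\zeta(C(n-1,n-1)))=0$, so the first decomposition already gives $\lambda_{\min}(\rho_\zeta(H))\ge 0$ and they can never be equality candidates. The strictness comes entirely from the first decomposition: for even $n$ the four listed exceptions are precisely the admissible $\zeta$ (hooks with $m\in\{2,n-3\}$, near hooks with $m\in\{2,n-2\}$) at which $\lambda_{\min}(\rho_\zeta(C(n-1,n-1)))$ equals $-(n-3)!$, while for every other hook or near hook the relevant factorial product $(m-1)!(n-1-m)!$ or $m!(n-2-m)!$ is strictly smaller than $(n-3)!$, which is exactly the strict inequality you invoke but attach to the wrong decomposition. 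With the envelope corrected and the strictness argument relocated to the first decomposition, your plan becomes the paper's proof.
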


\begin{proof} Recall $P_i=C(n,n-1)\cap G_{i}$ for every $i\in [n]$, and that for any $\zeta\vdash n$ and any $i\in [n]$, the matrix $\rho_\zeta(P_i)$ is symmetric and has the same eigenvalues as $\rho_\zeta(C(n-1,n-1))$.

We apply Weyl Inequalities to two different decompositions of $H$. The first decomposition is $H=\dot\cup_{i=r+1}^n P_i$. Applying Weyl Inequalities, we obtain: 
\begin{gather}
		\lambda_{\min}(\rho_\zeta(H))\ge (n-r)\lambda_{\min}(\rho_\zeta(C(n-1,n-1))) \label{eq:min_weyl_Inequalities_for_n-1}.
	\end{gather}
Now we substitute Lemma~\ref{lem:n-1_cycle_eigenvalues} to \eqref{eq:min_weyl_Inequalities_for_n-1}.
\begin{itemize}
	\item[$\bullet$]   If $\zeta$ is not a hook or a near hook, then $\lambda_{\min}(\rho_\zeta(H))\ge 0$.

		\item[$\bullet$] If $\zeta=(n-m,1^m)$ with $2\leq m\leq n-3$ and $m$ is even, then $\lambda_{\min}(\rho_\zeta(H))\ge -(n-r)(m-1)!(n-1-m)!\ge -(n-r)(n-3)!$, and if in addition $\zeta\ne (n-2,1^2)$, then $\lambda_{\min}(\rho_\zeta(H))> -(n-r)(n-3)!$. 

        \item[$\bullet$] If $\zeta=(n-m,1^m)$ with $2\leq m\leq n-3$ and $m$ is odd, then \[\lambda_{\min}(\rho_\zeta(H))\ge -(n-r)m!(n-2-m)!\ge \begin{cases}
              -2(n-r)(n-4)!,&\text{ if }n\text{ is odd};\\
           	-(n-r)(n-3)!,&\text{ if }n\text{ is even}.
        \end{cases} \]
         When $n$ is even, if in addition $\zeta\ne (3,1^{n-3} )$, then $\lambda_{\min}(\rho_\zeta(H))>-(n-r)(n-3)!$.

        \item[$\bullet$] If $\zeta=(n-m,2,1^{m-2})$ with $2\leq m\leq n-2$ and $m$ is even, then $\lambda_{\min}(\rho_\zeta(H))\ge -(n-r)(m-1)!(n-1-m)!\ge-(n-r)(n-3)!$. When $n$ is even, if in addition $\zeta\ne(n-2,2)$ or $(2,2,1^{n-4})$, then $\lambda_{\min}(\rho_\zeta(H))>-(n-r)(n-3)!$.


        \item[$\bullet$] If $\zeta=(n-m,2,1^{m-2})$ with $2\leq m\leq n-2$ and $m$ is odd, then $\lambda_{\min}(\rho_\zeta(H))\ge 0$.
 \end{itemize}
 To sum up, we have $\lambda_{\min}(\rho_\zeta(H))\ge -(n-r)(n-3)!$. For even $n$, if in addition $\zeta\ne (n-2,1^2)$,  $(3,1^{n-3})$, $(n-2,2)$ or $(2,2,1^{n-4})$, then $\lambda_{\min}(\rho_\zeta(H))> -(n-r)(n-3)!$.

The second decomposition of $H$ is $H=C(n,n-1)\setminus (\dot\cup_{i=1}^r P_i)$. This implies $\rho_\zeta(H)=\rho_\zeta(C(n,n-1))-\sum_{i=1}^i \rho_\zeta(P_i)$ for any $\zeta\vdash n$.  By Weyl Inequalities, we have:
    \begin{gather}
		\lambda_{\min}(\rho_\zeta(H))\ge \lambda_{\min}(\rho_\zeta(C(n,n-1)))-r\cdot \lambda_1(\rho_\zeta(C(n-1,n-1))) \label{eq:min_weyl_Inequalities_r=1}.
	\end{gather}
    The unique eigenvalue of $\rho_\zeta(C(n,n-1))$ is given by $|C(n,n-1)|\cdot \tilde{\chi}_{\zeta}(\sigma)$, where $\sigma$ is any element in $C(n,n-1)$ and $\tilde{\chi}_{\zeta}(\sigma)$ can be calculated by Lemma~\ref{lem:n-1_cycles}.  Substituting Lemma~\ref{lem:n-1_cycle_eigenvalues} into \eqref{eq:min_weyl_Inequalities_r=1} gives us the following results:
\begin{itemize}
		\item[$\bullet$] If $\zeta$ is not a hook or a near hook, then the eigenvalues of $\rho_\zeta(C(n,n-1))$ and $\rho_\zeta(C(n-1,n-1))$ are all $0$, and thus by \eqref{eq:min_weyl_Inequalities_r=1} we derive $\lambda_{\min}(\rho_\zeta)\ge 0$.

		\item[$\bullet$] If $\zeta=(n-m,1^m)$ with $2\leq m\leq n-3$ and $m$ is even, then all eigenvalues of $\rho_\zeta(C(n,n-1))$ are $0$ and thus \[\lambda_{\min}(\rho_\zeta(H))\ge -r\cdot m!(n-2-m)!\ge \begin{cases}
           -r(n-3)!, &\text{ if }n\text{ is odd};\\
			-2r(n-4)!,&\text{ if }n\text{ is even}.
		\end{cases}\]



		\item[$\bullet$] If $\zeta=(n-m,1^m)$ with $2\leq m\leq n-3$ and $m$ is odd, then $\lambda_{\min}(\rho_\zeta(H))\ge -r\cdot (m-1)!(n-1-m)!\ge-2r(n-4)!$. 


        \item[$\bullet$] If $\zeta=(n-m,2,1^{m-2})$ with $2\leq m\leq n-2$ and $m$ is even, then the unique eigenvalue of $\rho_\zeta(C(n,n-1))$ is $-(n-m)(n-2-m)!m(m-2)!$ and $\lambda_{\min}(\rho_\zeta(H))\ge -(n-m)(n-2-m)!m(m-2)!\ge-2(n-2)(n-4)!.$ 




        \item[$\bullet$] If $\zeta=(n-m,2,1^{m-2})$ with $2\leq m\leq n-2$ and $m$ is odd, then the unique eigenvalue of $\rho_\zeta(C(n,n-1))$ is $(n-m)(n-2-m)!m(m-2)!$ and 
        \begin{align*}
       \lambda_{\min}(\rho_\zeta(H)) &\ge (n-m)(n-2-m)!m(m-2)!-r\cdot (m-1)!(n-1-m)!\\
       &>-(r-1)\cdot (m-1)!(n-1-m)! \\
        &\ge \begin{cases}
            -(r-1)(n-3)!,&\text{ if }n\text{ is odd};\\
        	-2(r-1)(n-4)!,&\text{ if }n\text{ is even}.
        \end{cases}	
        \end{align*}  
	\end{itemize} To sum up, if $n$ is odd and $1\leq r\leq 2$, then $\lambda_{\min}(\rho_\zeta(H))\ge -2(n-2)(n-4)!$; if $n$ is odd and $3\leq r\leq n-2$, then $\lambda_{\min}(\rho_\zeta(H))\ge -r(n-3)!$; if $n$ is even, then $\lambda_{\min}(\rho_\zeta(H))\ge -2(n-2)(n-4)!$. 

	

	Combining the bounds we built via the two decompositions, we have the following results.  
    If $n$ is odd and $1\leq r\leq 2$, then $\lambda_{\min}(\rho_\zeta(H))\ge \max\{-(n-r)(n-3)!,-2(n-2)(n-4)!\}= -2(n-2)(n-4)!$. If $n$ is odd and $3\leq r< n/2$, then $\lambda_{\min}(\rho_\zeta(H))\ge \max\{-(n-r)(n-3)!,-r(n-3)!\}= -r(n-3)!$. If $n$ is odd and $n/2< r\leq n-2$, then $\lambda_{\min}(\rho_\zeta(H))\ge \max\{-(n-r)(n-3)!,-r(n-3)!\}= -(n-r)(n-3)!$.
    If $n$ is even and $1\leq r\leq n-3$, then $\lambda_{\min}(\rho_\zeta(H))\ge \max\{-(n-r)(n-3)!,-2(n-2)(n-4)!\}= -2(n-2)(n-4)!$. 
    If $n$ is even and $r=n-2$, then $\lambda_{\min}(\rho_\zeta(H))\ge \max\{-(n-r)(n-3)!,-2(n-2)(n-4)!\}=-(n-r)(n-3)!$, and if in addition  $\zeta\ne (n-2,1^2)$, $(3,1^{n-3})$, $(n-2,2)$ or $(2,2,1^{n-4})$, then $\lambda_{\min}(\rho_\zeta(H))> -(n-r)(n-3)!$.
    \end{proof}

\begin{cor}\label{thm:even_small}
   Let $\Gamma=\mathrm{Cay}(S_n,C(n,n-1;r))$ with $n\ge 6$ even and $r\in \{1,2,\ldots,n-2\}$.
  \begin{enumerate}[{\rm (a)}]
    \item If $r=1$, then the smallest eigenvalue of $\Gamma$ is $-(n-2)!$ with multiplicity $2(n-1)$, attained exactly by $(n-1,1)$ and $(2,1^{n-2})$.
    \item If $2\leq r\leq n-3$, then the smallest eigenvalue of $\Gamma$ is $-(n-r)(n-3)!$ with multiplicity $2(n-1)(r-1)$, attained exactly by $(n-1,1)$ and $(2,1^{n-2})$.
  	\item If $r=n-2$, then the smallest eigenvalue of $\Gamma$ is $-(n-r)(n-3)!$, which is  attained by $(n-1,1)$ and $(2,1^{n-2})$ and can only be attained possibly by $(n-1,1)$, $(2,1^{n-2})$, $(n-2,1^2)$, $(3,1^{n-3})$, $(n-2,2)$ and $(2,2,1^{n-4})$.
  \end{enumerate}
\end{cor}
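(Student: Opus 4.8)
The plan is to identify the smallest eigenvalue of $\Gamma$ as $\min_{\zeta\vdash n}\lambda_{\min}(\rho_\zeta(H))$ via Proposition~\ref{prop1.1}, mirroring the proof of Theorem~\ref{thm:n-1_even} but working at the bottom of the spectrum rather than the top. First I would record that, $n$ being even, every element of $H=C(n,n-1;r)$ is an even permutation, so $\mathrm{sgn}\equiv 1$ on $H$ and Lemma~\ref{lem:transpose} gives $\rho_{\zeta'}(H)=\rho_\zeta(H)$ for every $\zeta\vdash n$; in particular $\rho_{(2,1^{n-2})}(H)=\rho_{(n-1,1)}(H)$, while $\rho_{(n)}(H)$ and $\rho_{(1^n)}(H)$ both equal the positive scalar $|H|=(n-r)(n-2)!$ and therefore play no role at the bottom of the spectrum. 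Note that, in contrast to the odd case (Corollary~\ref{cor:odd_small}), for even $n$ the graph $\Gamma$ is a disjoint union of two copies of $\mathrm{Cay}(A_n,H)$ and its spectrum is \emph{not} symmetric about $0$, which is precisely why a dedicated lower bound such as Lemma~\ref{lem:n-1_even_smallest} is required rather than a reflection of Theorem~\ref{thm:n-1_even}. The case $n=6$ would be disposed of by a direct \textsc{Magma} computation, exactly as in Theorem~\ref{thm:n-1_even}, so from here on I assume $n\ge 8$ even, which is the range covered by Lemma~\ref{lem:n-1_even_smallest}.

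The second step is to pin down the contribution of the two candidate representations $(n-1,1)$ and $(2,1^{n-2})$. By Lemma~\ref{lem:multiplicity_n-1}, $\lambda_{\min}(\rho_{(n-1,1)}(H))$ equals $-(n-2)!$ with multiplicity $1$ when $r=1$, and equals $-(n-r)(n-3)!$ with multiplicity $r-1$ when $2\le r\le n-2$; since $\rho_{(2,1^{n-2})}(H)=\rho_{(n-1,1)}(H)$, the same holds for $(2,1^{n-2})$. A short check that this minimum coincides with neither $r(n-3)!$ nor $(2r-n)(n-3)!$ (the other eigenvalues in Lemma~\ref{lem:multiplicity_n-1}) confirms that its multiplicity within each of the two representations is exactly as claimed.

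The third and decisive step is to rule out all remaining $\zeta$ using the lower bounds of Lemma~\ref{lem:n-1_even_smallest}. For $r=1$ (part~(a)) and $2\le r\le n-3$ (part~(b)), Lemma~\ref{lem:n-1_even_smallest}(d) gives $\lambda_{\min}(\rho_\zeta(H))\ge -2(n-2)(n-4)!$ for every $\zeta\ne(n),(n-1,1),(2,1^{n-2}),(1^n)$; comparing this with the main value, the inequality to verify is
\begin{equation*}
	2(n-2)(n-4)!<(n-r)(n-3)!\quad(r\ge 2),\qquad 2(n-2)(n-4)!<(n-2)!\quad(r=1).
\end{equation*}
The extremal case $r=n-3$ reduces the first to $2(n-2)<3(n-3)$, and the second reduces to $2<n-3$; both hold for $n\ge 6$. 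Hence every such $\zeta$ satisfies $\lambda_{\min}(\rho_\zeta(H))>-(n-r)(n-3)!$ (respectively $>-(n-2)!$), so the minimum is attained \emph{exactly} by $(n-1,1)$ and $(2,1^{n-2})$; Proposition~\ref{prop1.1} together with $d_{(n-1,1)}=d_{(2,1^{n-2})}=n-1$ then yields multiplicity $2(n-1)$ when $r=1$ and $2(n-1)(r-1)$ when $2\le r\le n-3$. For $r=n-2$ (part~(c)) I would instead invoke Lemma~\ref{lem:n-1_even_smallest}(e), which gives $\lambda_{\min}(\rho_\zeta(H))\ge -(n-r)(n-3)!$ with \emph{strict} inequality unless $\zeta\in\{(n-2,1^2),(3,1^{n-3}),(n-2,2),(2,2,1^{n-4})\}$, whence the smallest eigenvalue equals $-(n-r)(n-3)!$ and is attained by $(n-1,1),(2,1^{n-2})$ and possibly by these four further partitions.

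I expect the main obstacle to be precisely the case $r=n-2$: here the lower bound in Lemma~\ref{lem:n-1_even_smallest}(e) is not strict for the four partitions $(n-2,1^2),(3,1^{n-3}),(n-2,2),(2,2,1^{n-4})$ (which pair up under conjugation, consistent with $\rho_{\zeta'}(H)=\rho_\zeta(H)$), so the Weyl-inequality estimates cannot decide whether they actually attain the minimum. Consequently the exact multiplicity cannot be extracted from the present bounds, which is why part~(c) only asserts a list of possible attaining representations rather than a precise multiplicity—entirely analogous to the indeterminacy seen for the strictly second largest eigenvalue in Theorem~\ref{thm:n-1_even}(b.1).
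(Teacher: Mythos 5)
Your proposal is correct and follows essentially the same route as the paper's proof: dispose of $n=6$ by \textsc{Magma}, use Lemma~\ref{lem:multiplicity_n-1} together with Lemma~\ref{lem:transpose} to get $\lambda_{\min}(\rho_{(n-1,1)}(H))=\lambda_{\min}(\rho_{(2,1^{n-2})}(H))$ and its multiplicity, then invoke Lemma~\ref{lem:n-1_even_smallest}(d) (respectively (e) for $r=n-2$) to exclude all other partitions, and finish with Proposition~\ref{prop1.1} for the multiplicities. Your only additions are explicit verifications of the comparison inequalities (e.g. $2(n-2)(n-4)!<(n-r)(n-3)!$) that the paper asserts without spelling out, which is harmless.
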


\begin{proof}Since $n$ is even, all permutations in $H=C(n,n-1;r)$ are even. The largest eigenvalue of $\Gamma$ is $|H|=(n-r)(n-2)!$ with multiplicity $2$, attained by $(n)$ and $(1^n)$ simultaneously. When $n=6$ and $1\leq r\leq 4$, we verify the samllest eigenvalue of $\Gamma$ via computation in \textsc{Magma} \cite{BCP}. Now suppose $n\ge 8$.

	By Lemma~\ref{lem:multiplicity_n-1} and Lemma~\ref{lem:transpose}, we have \[\lambda_{\min}(\rho_{(n-1,1)}(H))=\lambda_{\min}(\rho_{(2,1^{n-2})}(H))=\begin{cases}
		-(n-r)(n-3)!, &\text{ if }2\leq r\leq n-2;\\
		-(n-2)!,&\text{ if }r=1
	\end{cases} \]    with multiplicity \[b:=\begin{cases}
		r-1, &\text{ if } 2\leq r\leq n-2;\\
		1,&\text{ if }r=1.
	\end{cases}\]



	First suppose $r=1$. We find from Lemma~\ref{lem:n-1_even_smallest} (d) that for any $\zeta\vdash n$ and $\zeta\ne (n)$, $(n-1,1)$, $(2,1^{n-2})$ or $(1^n)$, we have the smallest eigenvalue $\lambda_{\min}(\rho_\zeta(H))\ge -2(n-2)(n-4)!$, which is strictly larger than $-(n-2)!$. Thus the smallest eigenvalue of $\Gamma$ is $-(n-2)!$, which is attained exactly by $(n-1,1)$ and $(2,1^{n-2})$ with multiplicity $d_{(n-1,1)}\cdot b+d_{(2,1^{n-2})}\cdot b=2(n-1)$.

	Next suppose $2\leq r\leq n-3$.   We find from Lemma~\ref{lem:n-1_even_smallest} (d) that for any $\zeta\vdash n$ and $\zeta\ne (n)$, $(n-1,1)$, $(2,1^{n-2})$ or $(1^n)$, we have the smallest eigenvalue $\lambda_{\min}(\rho_\zeta(H))\ge -2(n-2)(n-4)!>-(n-r)(n-3)!$. Thus the smallest eigenvalue of $\Gamma$ is $-(n-r)(n-3)!$,  attained exactly by $(n-1,1)$ and $(2,1^{n-2})$. The multiplicity of this eigenvalue is $d_{(n-1,1)}\cdot b+d_{(2,1^{n-2})}\cdot b=2(n-1)(r-1)$.

    Finally, suppose $r=n-2$.  We find from Lemma~\ref{lem:n-1_even_smallest} (e) that for any $\zeta\vdash n$ and $\zeta\ne (n)$, $(n-1,1)$, $(2,1^{n-2})$ or $(1^n)$, we have the smallest eigenvalue $\lambda_{\min}(\rho_\zeta(H))\ge -(n-r)(n-3)!$. Thus the smallest eigenvalue of $\Gamma$ is $-(n-r)(n-3)!$, which is attained by $(n-1,1)$ and $(2,1^{n-2})$, and can only be attained possibly by  $(n-1,1)$, $(2,1^{n-2})$, $(n-2,1^2)$, $(3,1^{n-3})$, $(n-2,2)$ and $(2,2,1^{n-4})$.
\end{proof}

\begin{re}
	Corollary~\ref{cor:odd_small} can also be derived based on Lemma~\ref{lem:n-1_even_smallest}. Moreover, when $n=6$, the smallest eigenvalue of $\mathrm{Cay}(S_n,C(n,n-1;n-2))$ is $-12$ with multiplicity $68$, attained exactly by $(n-1,1)$, $(2,1^{n-2})$, $(n-2,1^2)$, $(3,1^{n-3})$, $(n-2,2)$ and $(2,2,1^{n-4})$. 
\end{re}

\section{$\mathbf{C(n,k;r)}$ with $\mathbf{k\leq n-2}$} \label{sec:n-2}

In this section, we prove in Theorems~\ref{thm:even_n-2} and \ref{thm:odd_n-2} that $\mathrm{Cay}(S_n,C(n,k;r))$ has the Aldous property whenever $4\leq k\leq n-2$ and $1\leq r\leq k-1$.  The cases of $k=2$ and $3$ have already been solved in \cite{FOW}  and \cite{HH}, respectively.    Before going straight to proving Theorems~\ref{thm:even_n-2} and \ref{thm:odd_n-2}, we first solve the case of even $k=n-2$, which acts as the base for induction in the proof of Theorem~\ref{thm:even_n-2}. 

\begin{lem}\label{lem:even_k=n-2_r=1_1}
Suppose that $n\ge 8$ is even and $\zeta\vdash n$ such that $\zeta\ne (n)$, $(n-1,1)$, $(2,1^{n-2})$ or $(1^n)$. Let $H=C(n,n-2;1)$. If $\zeta=(n-2,2)$ or $(n-m,1^m)$ with $2\leq m\leq n-3$, then $\lambda_1(\rho_\zeta(H))\leq 0 $; otherwise  $\lambda_1(\rho_\zeta(H))\leq(n-1)(n-4)! $.

\end{lem}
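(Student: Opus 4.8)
The plan is to run the $r=1$ argument of Lemma~\ref{lem:C(n,n-1;r)} one layer deeper, with the conjugacy class $C(n,n-1)$ replaced by $C(n,n-2)$. Let $G_1$ be the stabiliser of $1$ and set $P_1:=C(n,n-2)\cap G_1$, the set of $(n-2)$-cycles fixing $1$; under $\mathrm{Sym}_{[n]\setminus\{1\}}\cong S_{n-1}$ it is carried to the conjugacy class $C(n-1,n-2)$ of $S_{n-1}$. Because an $(n-2)$-cycle of $S_n$ fixes exactly two points, we have the disjoint decomposition $C(n,n-2)=H\,\dot\cup\,P_1$, so that $\rho_\zeta(H)=\rho_\zeta(C(n,n-2))-\rho_\zeta(P_1)$. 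Since $C(n,n-2)$ is a conjugacy class of $S_n$, Schur's Lemma forces $\rho_\zeta(C(n,n-2))$ to be the scalar $c_\zeta:=|C(n,n-2)|\,\tilde\chi_\zeta(\sigma)$ for $\sigma$ an $(n-2)$-cycle. Thus the subtraction of a scalar is exact, and
\[
\lambda_1(\rho_\zeta(H))=c_\zeta-\lambda_{\min}\bigl(\rho_\zeta(P_1)\bigr),
\]
reducing everything to the scalar $c_\zeta$ and the least eigenvalue of $\rho_\zeta(P_1)$.

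First I would compute the spectrum of $\rho_\zeta(P_1)=\rho_\zeta(C(n-1,n-2))$ exactly as in Lemma~\ref{lem:n-1_cycle_eigenvalues}, but applying Lemma~\ref{lem:n-1_cycles} inside $S_{n-1}$: by the Branching Rule $\rho_\zeta$ restricts to $\bigoplus_{\zeta^-}\rho_{\zeta^-}$, each $\rho_{\zeta^-}(C(n-1,n-2))$ is the scalar $|C(n-1,n-2)|\,\tilde\chi_{\zeta^-}(\tau)$, and this vanishes unless $\zeta^-$ is a row, a column, or a near hook of $n-1$. As $\zeta\neq(n),(n-1,1),(2,1^{n-2}),(1^n)$, no child is a full row or column, so every nonzero eigenvalue of $\rho_\zeta(P_1)$ comes from a near-hook child $\zeta^-=(n-1-m,2,1^{m-2})$ and equals $e_m:=(-1)^{m-1}(n-1-m)(n-3-m)!\,m\,(m-2)!$; a short monotonicity check (the $|e_m|$ are symmetric in $m\leftrightarrow n-1-m$ and decrease from the ends) gives $|e_m|\le 2(n-3)(n-5)!$ throughout. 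For the scalar $c_\zeta$ I would invoke the Murnaghan--Nakayama rule: $c_\zeta\ne0$ exactly when $\zeta$ has a cell of hook length $n-2$, which, after discarding the four excluded shapes, happens only for the near hooks $(n-2,2)$, $(2,2,1^{n-4})$ and the ``doubly near hook'' shapes $(a,3,1^b)$ and $(a,2,2,1^b)$ with $a+b=n-3$; in each such case $|c_\zeta|=|C(n,n-2)|/d_\zeta$.

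The routine cases then follow at once. If $\zeta=(n-m,1^m)$ is a hook with $2\le m\le n-3$ then every child is again a hook, so $\rho_\zeta(P_1)=0$ and $c_\zeta=0$, giving $\lambda_1(\rho_\zeta(H))=0$. If $c_\zeta=0$ and $\zeta$ is not a hook (for instance a near hook $(n-m,2,1^{m-2})$ with $3\le m\le n-3$, or any shape that is neither a hook nor a near hook), then $\lambda_1(\rho_\zeta(H))=-\lambda_{\min}(\rho_\zeta(P_1))\le 2(n-3)(n-5)!\le(n-1)(n-4)!$ for $n\ge8$. For $\zeta=(n-2,2)$ the unique near-hook child is $(n-3,2)$, contributing $e_2=-2(n-3)(n-5)!$, while $c_\zeta=-(n-1)(n-4)!$; hence $\lambda_1(\rho_\zeta(H))=-(n-1)(n-4)!+2(n-3)(n-5)!=-(n-2)(n-5)(n-5)!<0$, which is the sharper first bound.

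The shapes $(2,2,1^{n-4})$ and the doubly near hooks are where the genuine difficulty lies, and I expect this to be the main obstacle, since for them $c_\zeta$ is large in absolute value. The saving point is that each of these shapes has a \emph{unique} near-hook child, so the trace identity $\rho_\zeta(C(n,n-2))=\tfrac12\sum_{i=1}^n\rho_\zeta(P_i)$ (every $(n-2)$-cycle lying in exactly two stabilisers) gives $c_\zeta=\tfrac{n}{2d_\zeta}\,d_{\zeta^-}e_{\zeta^-}$, forcing $\mathrm{sgn}(c_\zeta)=\mathrm{sgn}(e_{\zeta^-})$. When this common sign is positive one has $\lambda_{\min}(\rho_\zeta(P_1))=0$ and $\lambda_1(\rho_\zeta(H))=c_\zeta=|C(n,n-2)|/d_\zeta$, so the claim reduces to the dimension estimate $d_\zeta\ge n(n-3)/2$; when it is negative the two negative terms partly cancel and the claim reduces to a factorial estimate $a(a-2)!\,(b+2)\,b!\le(n-1)(n-4)!$. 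Both quantities are extremal at the most skewed shapes $(n-3,3)$, $(n-4,3,1)$, $(3,3,1^{n-6})$ and their transposes, where they can be verified directly for $n\ge8$ via the Hook Formula (Lemma~\ref{lem:Hook}); the shape $(2,2,1^{n-4})$ yields the equality $\lambda_1(\rho_\zeta(H))=(n-1)(n-4)!$, so the second bound is sharp.
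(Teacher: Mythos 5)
Your proposal is correct and its skeleton is the same as the paper's: you use the identical decomposition $C(n,n-2)=H\,\dot\cup\,P_1$ with $P_1=C(n,n-2)\cap G_1$, the scalarity of $\rho_\zeta(C(n,n-2))$, and the Branching Rule together with Lemma~\ref{lem:n-1_cycles} (applied inside $S_{n-1}$) to compute the spectrum of $\rho_\zeta(P_1)$. You diverge from the paper in two places, both to your advantage in precision. First, you note that subtracting the scalar $c_\zeta\mathcal{I}$ gives the exact identity $\lambda_1(\rho_\zeta(H))=c_\zeta-\lambda_{\min}(\rho_\zeta(P_1))$, whereas the paper invokes Weyl Inequalities (inequality \eqref{ineq:5.1}); since one summand is scalar the two are equivalent in strength, but your formulation makes the sharpness claims ($\lambda_1=(n-1)(n-4)!$ at $(2,2,1^{n-4})$, strict negativity at $(n-2,2)$) immediate. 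Second, and more substantively, for the shapes with $c_\zeta\neq 0$ the paper simply reads the values and signs of $\chi_\zeta(\sigma)$ from Table~\ref{tab:tab4} (imported from \cite{LXZ2}), while you obtain $|c_\zeta|=|C(n,n-2)|/d_\zeta$ from Murnaghan--Nakayama and the sign of $c_\zeta$ from the trace identity $2\,C(n,n-2)^+=\sum_{i=1}^n P_i^+$ combined with the uniqueness of the near-hook child; this reduces the positive-sign cases to the dimension bound $d_\zeta\ge n(n-3)/2$. Your route is self-contained (no external character table for $(n-2)$-cycles), at the cost of two verifications you only sketch: the monotonicity giving $|e_m|\le 2(n-3)(n-5)!$, and the dimension estimate, whose minimum over the doubly near hooks is $d_{(n-3,3)}=d_{(2,2,2,1^{n-6})}=\tfrac{n(n-1)(n-5)}{6}\ge\tfrac{n(n-3)}{2}$ for $n\ge 7$; both are true and routine via Lemma~\ref{lem:Hook}, though the reduction ``to the extremal shapes'' implicitly uses unimodality of these dimensions in $m$, which you should either prove or replace by checking the explicit dimension formula for all $m$. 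One small slip: for the family $(a,2,2,1^b)$ the constraint is $a+b=n-4$, not $n-3$.
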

\begin{proof}
Note that $H=C(n,n-2)\setminus (C(n,n-2)\cap G_1)$, where $G_1$ is the stabilizer of $1$ in $S_n$. Similar to the analysis at the start of Section~\ref{sec:proof_for_theorem_n-1}, the map $f_1$ sends $C(n,n-2)\cap G_1$ to $C(n-1,n-2)$ and thus for any $\zeta\vdash n$, the spectrum of $\rho_\zeta(C(n,n-2)\cap G_1)$ is the same as that of $\rho_\zeta(C(n-1,n-2))$.   
By Lemma~\ref{lem:symmetric} and Weyl Inequalities, we deduce 
\begin{align}
   \lambda_1(\rho_\zeta(H))\leq \lambda_1(\rho_\zeta(C(n,n-2)))-\lambda_{\min}(\rho_\zeta(C(n-1,n-2))).\label{ineq:5.1}
\end{align}

The set $C(n,n-2)$ is the conjugacy class of $(n-2)$-cycles in $S_n$ of size $\binom{n}{2}(n-3)!$. Hence by Schur's Lemma, $\rho_\zeta(C(n,n-2))$ is a scalar matrix with the unique eigenvalue $|C(n,n-2)|\cdot \tilde{\chi}_\zeta(\sigma)$, where $\sigma$ is any $(n-2)$-cycle of $S_n$. Then we can refer to Table 4 in \cite{LXZ2} for all nonzero characters of $(n-2)$-cycles in $S_n$. For the reader's convenience, we include this table here.  
\begin{table}[thp]
\renewcommand\arraystretch{2.0}
\centering
\begin{tabular}{|ccc| c c c| c c c|}
\hline
& $\zeta\vdash n$ &&&  $\dim \rho_\zeta$ &&&  $\chi_\zeta(\sigma)$&\\\hline 
& $(n)$ &&& $1$ &&& $1$ &\\ \hline 
& $(1^n)$ &&& $1$ &&& $(-1)^{n-1}$&\\ \hline 
& $(n-1,1)$ &&& $n-1$ &&& $1$ &\\\hline 
& $(2,1^{n-2})$ &&& $n-1$ &&&  $(-1)^{n-1}$& \\\hline
& $(n-2,2)$ &&& $\frac{n(n-3)}{2}$ &&& $-1$& \\ \hline 
& $(2^2,1^{n-4})$ &&& $\frac{n(n-3)}{2}$ &&& $(-1)^{n}$& \\ \hline 
& $(n-m,3,1^{m-3})$ &&& $\frac{n!}{2m(n-2)(n-m)(n-m-1)(m-3)!(n-m-3)!}$ &&& $(-1)^{m}$&\\\hline
& $(n-m,2^2,1^{m-4})$ &&& $\frac{n!}{2(m-1)(m-2)(n-2)(n-m+1)(m-4)!(n-m-2)!}$ &&& $(-1)^{m}$&\\ 
\hline
\end{tabular}
\vspace{0.2cm}
\caption{Nonzero characters of irreducible representations of $S_n$ on an $(n-2)$-cycle $\sigma$}
\label{tab:tab4}
\end{table} 

As for $C(n-1,n-2)$, it is the conjugacy class of $(n-2)$-cycles in $S_{n-1}$. By Branching Rule, for any $\zeta\vdash n$ we have $\rho_\zeta(C(n-1,n-2))=\oplus_{\zeta^-}\rho_{\zeta^-}(C(n-1,n-2))$. Now each $\rho_{\zeta^-}(C(n-1,n-2))$ is a scalar matrix and we can get its unique eigenvalue by Lemma~\ref{lem:n-1_cycles}. 
\begin{itemize}
    \item[$\bullet$]  If $\zeta=(n-2,2)$, then we see from Table~\ref{tab:tab4} that the unique eigenvalue of $\rho_\zeta(C(n,n-2))$ is $-(n-1)(n-4)!$. The eigenvalues of $\rho_{\zeta^-}(C(n-1,n-2))$ are $0$ and $-2(n-3)(n-5)!$ by Lemma~\ref{lem:n-1_cycles}. Thus it follows from \eqref{ineq:5.1} that $\lambda_1(\rho_\zeta(H))\leq -(n-1)(n-4)!+2(n-3)(n-5)!<0$. 
    \item[$\bullet$]  If $\zeta=(2,2,1^{n-4})$, then we have $\lambda_1(\rho_\zeta(C(n,n-2)))=-\lambda_{\min}(\rho_{(n-2,2)}(C(n,n-2))) =(n-1)(n-4)!$, as $\rho_\zeta(C(n,n-2))=-\rho_{(n-2,2)}(C(n,n-2))$ by Lemma~\ref{lem:transpose}. Similarly, the eigenvalues of $\rho_{\zeta^-}(C(n-1,n-2))$ are $0$ and $2(n-3)(n-5)!$. Thus $\lambda_1(\rho_\zeta(H))\leq (n-1)(n-4)!$ follows from \eqref{ineq:5.1}.
    \item[$\bullet$]  If $\zeta=(n-m,2,1^{m-2})$ with $3\leq m\leq n-3$, then we deduce from Table~\ref{tab:tab4} that $\lambda_1(\rho_\zeta(C(n,n-2)))=0$. In this case, $\zeta^-$ has the form $(n-1-s,1^s)$ with $2\leq s\leq n-4$ or $(n-1-t,2,1^{t-2})$ with $2\leq t\leq n-3$. The eigenvalues of $\rho_{\zeta^-}(C(n-1,n-2))$ are $0$ and $(-1)^{t-1}(n-1-t)(n-3-t)!t(t-2)!$ with $2\leq t\leq n-3$. Thus by \eqref{ineq:5.1} we deduce $\lambda_1(\rho_\zeta(H))\leq 2(n-3)(n-5)!$. 
    \item[$\bullet$] If $\zeta=(n-m,3,1^{m-3})$ with $3\leq m\leq n-3$, then $\lambda_1(\rho_\zeta(C(n,n-2)))=(-1)^m m(n-m)(n-m-1)(m-3)!(n-m-3)!$ by Table~\ref{tab:tab4}. 
    	In this case, $\zeta^-=(n-1-m,3,1^{m-3})$ if $n\leq n-4$, $(n-m,3,1^{m-4})$ if $n\ge 4$, or $(n-m,2,1^{m-3})$. By Lemma~\ref{lem:n-1_cycles}, the eigenvalues of $\rho_{\zeta^-}(C(n-1,n-2))$ are $0$ or $(-1)^m(n-m)(n-2-m)!(m-1)(m-3)!$. When $m$ is even,  $\lambda_1(\rho_\zeta(H))\leq m(n-m)(n-m-1)(m-3)!(n-m-3)! \leq 4(n-4)(n-5)(n-7)!$. When $m$ is odd, $\lambda_1(\rho_\zeta(H))\leq - m(n-m)(n-m-1)(m-3)!(n-m-3)!+(n-m)(n-2-m)!(m-1)(m-3)!< 0$.  
     Overall, we have $\lambda_1(\rho_\zeta(H))\leq 4(n-4)(n-5)(n-7)!$.
    \item[$\bullet$] If $\zeta=(n-m,2,2,1^{m-4})$ with $4\leq m\leq n-2$, then we have $\lambda_1(\rho_\zeta(C(n,n-2)))=(-1)^m (m-1)(m-2)(n-m+1)(m-4)!(n-m-2)!$.
      	In this case, $\zeta^-=(n-1-m,2,2,1^{m-4})$ if $m\leq n-3$, $(n-m,2,2,1^{m-5})$ if $m\ge 5$, or $(n-m,2,1^{m-3})$. By Lemma~\ref{lem:n-1_cycles}, the eigenvalues of $\rho_{\zeta^-}(C(n-1,n-2))$ are $0$ or $(-1)^m(n-m)(n-m-2)!(m-1)(m-3)!$. When $m$ is even,  $\lambda_1(\rho_\zeta(H))\leq  (m-1)(m-2)(n-m+1)(m-4)!(n-m-2)!\leq 3(n-3)(n-4)(n-6)!$. When $m$ is odd, $\lambda_1(\rho_\zeta(H))\leq - (m-1)(m-2)(n-m+1)(m-4)!(n-m-2)!+(n-m)(n-m-2)!(m-1)(m-3)!< 0$.  
    Overall, we have $\lambda_1(\rho_\zeta(H))\leq 3(n-3)(n-4)(n-6)!$.

    \item[$\bullet$] For any other $\zeta\vdash n$, Table~\ref{tab:tab4} and Lemma~\ref{lem:n-1_cycles} imply that all the eigenvalues of $\rho_\zeta(C(n,n-2))$ and $\rho_{\zeta^-}(C(n-1,n-2))$ are $0$, and thus $\lambda_1(\rho_\zeta(H))\leq 0$ by \eqref{ineq:5.1}.  

\end{itemize}

To sum up, if $\zeta= (n-2,2)$ or $(n-m,1^m)$ with $2\leq m\leq n-3$, then $\lambda_1(\rho_\zeta(C(n,n-2;1)))\leq 0$; otherwise $\lambda_1(\rho_\zeta(C(n,n-2;1)))\leq (n-1)(n-4)!$.	
\end{proof}

\begin{lem}\label{lem:even_k=n-2_r=1}
Suppose that $n\ge 8$ is an even integer. The second largest eigenvalue of $\Gamma=\mathrm{Cay}(S_n,C(n,n-2;1))$ is attained uniquely by $(n-1,1)$. In particular, $\Gamma$ has the Aldous property.
\end{lem}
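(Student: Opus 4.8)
The plan is to compute $\alpha_2(\Gamma)$ representation by representation via Proposition~\ref{prop1.1}, letting the preceding Lemma~\ref{lem:even_k=n-2_r=1_1} dispose of all but a handful of partitions. First I would record the structural facts. Since $n$ is even, $k=n-2$ is even, so by Lemma~\ref{lem:generating_subgroup} the connection set $H=C(n,n-2;1)$ generates $S_n$ and $\Gamma$ is connected; hence its largest eigenvalue $|H|$ is simple, attained only by the trivial representation $(n)$, and $\alpha_2(\Gamma)=\lambda_2(\Gamma)$. Moreover every $(n-2)$-cycle is odd (its sign is $(-1)^{n-3}=-1$ for even $n$), so $\Gamma$ is bipartite, $\rho_{(1^n)}(H)=-|H|$ is the (simple) smallest eigenvalue, and Lemma~\ref{lem:transpose} gives $\rho_{(2,1^{n-2})}(H)=-\rho_{(n-1,1)}(H)$. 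By Proposition~\ref{prop1.1} the spectrum of $\Gamma$ is the union over $\zeta\vdash n$ of the spectra of $\rho_\zeta(H)$, so it suffices to show that, among all $\zeta\ne(n)$, the quantity $\lambda_1(\rho_\zeta(H))$ is maximised uniquely at $\zeta=(n-1,1)$ and lies strictly below $|H|$.

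Next I would pin down the two ``standard-type'' representations explicitly. Applying Lemma~\ref{lem:eigenvalues_standard_1} with $k=n-2$ and $r=1$ and simplifying the bracketed factor (which collapses to $(n-3)\cdot\tfrac{n}{n-2}$), I expect to obtain $\alpha_1(\rho_{(n-1,1)}(H))=\tfrac{n(n-3)}{2}(n-4)!$ and $\lambda_{\min}(\rho_{(n-1,1)}(H))=-\tfrac{(n-2)(n-3)}{2}(n-4)!$. Combined with the identity $\rho_{(2,1^{n-2})}(H)=-\rho_{(n-1,1)}(H)$ from the previous paragraph, this yields $\lambda_1(\rho_{(2,1^{n-2})}(H))=\tfrac{(n-2)(n-3)}{2}(n-4)!$. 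A direct count of the $(n-2)$-cycles moving the point $1$ gives the degree $|H|=\tfrac{(n-1)(n-2)}{2}(n-3)!$, which I would use to verify that $\alpha_1(\rho_{(n-1,1)}(H))<|H|$.

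The final step is an elementary comparison. Lemma~\ref{lem:even_k=n-2_r=1_1} bounds $\lambda_1(\rho_\zeta(H))\le (n-1)(n-4)!$ (indeed $\le 0$ for $\zeta=(n-2,2)$ and for the nontrivial hooks) for every $\zeta$ outside $\{(n),(1^n),(n-1,1),(2,1^{n-2})\}$. Since $\tfrac{n(n-3)}{2}>\tfrac{(n-2)(n-3)}{2}>n-1$ for $n\ge 8$ (the last inequality amounting to $n^2-7n+8>0$), the value $\tfrac{n(n-3)}{2}(n-4)!$ strictly exceeds every other $\lambda_1(\rho_\zeta(H))$ with $\zeta\ne(n)$, while $\tfrac{(n-1)(n-2)}{2}(n-3)!=|H|>\tfrac{n(n-3)}{2}(n-4)!$ (reducing to $(n-1)(n-2)>n$) places it strictly below the degree. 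Therefore $\alpha_2(\Gamma)=\tfrac{n(n-3)}{2}(n-4)!$ is attained uniquely by the standard representation $(n-1,1)$, which is exactly the Aldous property.

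I expect the only genuine work to be the algebraic simplification of the expression in Lemma~\ref{lem:eigenvalues_standard_1} into the clean closed form, together with the three strict inequalities driving the comparison; the conceptual heavy lifting has already been absorbed into Lemma~\ref{lem:even_k=n-2_r=1_1}, whose purpose is precisely to eliminate every partition that is neither a genuine hook nor the near-hook $(n-2,2)$ as a competitor. The one point requiring care is \emph{strictness} for the uniqueness claim: I must make sure the conjugate partition $(2,1^{n-2})$, whose top eigenvalue is the reflection of the bottom eigenvalue of the standard representation, falls \emph{strictly} below $(n-1,1)$, so that $(n-1,1)$ is the sole attaining partition.
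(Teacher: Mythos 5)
Your proposal is correct and follows essentially the same route as the paper's proof: both compute $\lambda_1(\rho_{(n-1,1)}(H))=\tfrac{n}{2}(n-3)!$ and $\lambda_1(\rho_{(2,1^{n-2})}(H))=\tfrac{1}{2}(n-2)!$ via Lemma~\ref{lem:eigenvalues_standard_1} and Lemma~\ref{lem:transpose}, then eliminate every other nontrivial partition using the bound $(n-1)(n-4)!$ from Lemma~\ref{lem:even_k=n-2_r=1_1}. Your extra checks (connectivity/bipartiteness and the comparison with the degree $|H|$) are sound but not needed beyond what the paper records.
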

\begin{proof}
Let $H=C(n,n-2;1)$. If $\zeta=(n)$, then the unique eigenvalue of $\rho_\zeta(H)$ is $|H|=\binom{n-1}{2}(n-3)!$, which is the largest eigenvalue of $\Gamma$. Similarly, $\zeta=(1^n)$ gives the smallest eigenvalue $-|H|=-\binom{n-1}{2}(n-3)!$ of $\Gamma$. By Lemmas~\ref{lem:transpose} and \ref{lem:eigenvalues_standard_1}, we have $\lambda_1(\rho_{(n-1,1)}(H))=n/2(n-3)!$ and $\lambda_1(\rho_{(2,1^{n-2})}(H))=-\lambda_{\min}(\rho_{(n-1,1)} (H) )= (n-2)!/2$. 
When $\zeta\ne (n)$, $(1^n)$, $(n-1,1)$ or $(2,1^{n-2})$, we have seen from Lemma~\ref{lem:even_k=n-2_r=1_1} that $\lambda_1(\rho_\zeta(C(n,n-2;1)))\leq (n-1)(n-4)!<n/2(n-3)!$. Thus the second largest eigenvalue of $\mathrm{Cay}(S_n, C(n,n-2;1))$ is $n/2(n-3)!$, attained uniquely by $(n-1,1)$.	
\end{proof}

\begin{lem}\label{lem:k=n-2_2}
	Suppose that $n\ge 8$ is even and $2\leq r<(n-1)/2$. The second largest eigenvalue of $\Gamma=\mathrm{Cay}(S_n,C(n,n-2;r))$ is attained uniquely by $(n-1,1)$. In particular, $\Gamma$ has the Aldous property.
\end{lem}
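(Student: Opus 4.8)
The plan is to show that every irreducible representation other than the standard one $(n-1,1)$ contributes a top eigenvalue strictly below $\mu_2(n,n-2;r)=\alpha_1(\rho_{(n-1,1)}(H))$, where $H=C(n,n-2;r)$. First I would record the structural facts: since $k=n-2$ is even, Lemma~\ref{lem:generating_subgroup} gives that $\Gamma$ is connected, and every element of $H$ is an odd permutation (an $(n-2)$-cycle has sign $(-1)^{n-3}=-1$ for even $n$), so $\Gamma$ is bipartite and $\lambda_2(\Gamma)=\alpha_2(\Gamma)$. The trivial and sign representations give the simple eigenvalues $\pm|H|$, and by Lemma~\ref{lem:eigenvalues_standard_2} the standard representation contributes the distinct eigenvalues $\alpha_1=\mu_2(n,n-2;r)$, $\alpha_2$, and $\alpha_3=-(n-4)!\binom{n-r}{2}$. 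Thus the whole problem reduces to bounding $\lambda_1(\rho_\zeta(H))$ for every $\zeta\vdash n$ with $\zeta\neq(n),(n-1,1)$ and proving it is smaller than $\alpha_1$.

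The main device is an overcounting decomposition of $H$ over point stabilizers. Each $(n-2)$-cycle in $H$ fixes exactly two points, both lying in $\{r+1,\dots,n\}$; hence each element of $H$ belongs to exactly two of the sets $H\cap G_{r+1},\dots,H\cap G_n$, giving the identity $\sum_{i=r+1}^{n}(H\cap G_i)^+=2H^+$ in $\mathbb{C}S_n$ and therefore $\rho_\zeta(H)=\tfrac12\sum_{i=r+1}^{n}\rho_\zeta(H\cap G_i)$. Just as for the sets $P_i$ in Section~\ref{sec:proof_for_theorem_n-1}, each $H\cap G_i$ is inverse-closed and, being conjugate to $H\cap G_n\cong C(n-1,n-2;r)\subset S_{n-1}$ by the transposition $(i~n)$, the symmetric matrices $\rho_\zeta(H\cap G_i)$ are mutually cospectral. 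Applying Weyl Inequalities to this sum then yields
\[
\lambda_1(\rho_\zeta(H))\le \frac{n-r}{2}\,\lambda_1\!\big(\rho_\zeta(C(n-1,n-2;r))\big).
\]

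Next I would bound the right-hand factor using the results of Section~\ref{sec:proof_for_theorem_n-1}, applied inside $S_{n-1}$, noting that $n-1$ is odd and $2\le r<(n-1)/2$. By Branching Rule, $\lambda_1(\rho_\zeta(C(n-1,n-2;r)))=\max_{\zeta^-}\lambda_1(\rho_{\zeta^-}(C(n-1,n-2;r)))$. Since $\Gamma':=\mathrm{Cay}(S_{n-1},C(n-1,n-2;r))$ is connected bipartite, its degree is a simple eigenvalue arising only from the trivial partition $(n-1)$, and Theorem~\ref{thm:n-1_odd}(c) identifies its strictly second largest eigenvalue as $(n-1-r)(n-4)!$. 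Consequently every eigenvalue coming from a partition $\zeta^-\neq(n-1)$ is at most $(n-1-r)(n-4)!$. Because $(n-1)$ occurs among the $\zeta^-$ only when $\zeta=(n)$ or $(n-1,1)$, for all remaining $\zeta$ we obtain $\lambda_1(\rho_\zeta(C(n-1,n-2;r)))\le(n-1-r)(n-4)!$, and hence
\[
\lambda_1(\rho_\zeta(H))\le \frac{n-r}{2}(n-1-r)(n-4)!=\binom{n-r}{2}(n-4)!=-\alpha_3.
\]

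Finally I would close the gap with a short numerical comparison: a direct computation from Lemma~\ref{lem:eigenvalues_standard_2} gives $\alpha_1+\alpha_3=(n-4)!\big(r(n-r)-n\big)$, which is positive since $r(n-r)\ge 2(n-2)>n$ for $n\ge 8$. Thus $-\alpha_3<\alpha_1=\mu_2(n,n-2;r)$, and combining with the previous bound shows $\lambda_1(\rho_\zeta(H))\le-\alpha_3<\mu_2(n,n-2;r)$ for every $\zeta\neq(n),(n-1,1)$; in particular the transpose identity (Lemma~\ref{lem:transpose}) shows $(2,1^{n-2})$ realizes exactly $-\alpha_3$, still strictly below $\mu_2$. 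Therefore $\alpha_2(\Gamma)=\mu_2(n,n-2;r)$ is attained uniquely by $(n-1,1)$, so $\Gamma$ has the Aldous property. The only genuinely delicate point is this last inequality: the value produced by the overcounting argument is exactly $-\alpha_3$, the top eigenvalue of the conjugate representation $(2,1^{n-2})$, so the argument succeeds precisely because the hypothesis $r<(n-1)/2$ (equivalently $r(n-r)>n$) keeps this competing value below the standard-representation eigenvalue; this is also why a cruder bound, or the complementary range $r\ge(n-1)/2$, would require a different treatment.
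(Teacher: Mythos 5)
Your argument is correct, and it takes a genuinely different route from the paper's. The paper bounds $\lambda_1(\rho_\zeta(H))$ for $\zeta\ne(n),(1^n),(n-1,1),(2,1^{n-2})$ by a \emph{subtractive} telescoping decomposition: it writes $C(n,n-2;i)=C(n,n-2;i-1)\setminus\bigl(C(n,n-2;i-1)\cap G_i\bigr)$, iterates Weyl Inequalities down to $C(n,n-2;1)$, and then invokes two auxiliary results --- Lemma~\ref{lem:even_k=n-2_r=1_1} (proved via the character values of $(n-2)$-cycles in Table~\ref{tab:tab4}) and the smallest-eigenvalue bounds of Lemma~\ref{lem:n-1_even_smallest} applied in $S_{n-1}$ --- which forces a shape-by-shape case analysis over hooks, near hooks, $(n-2,2)$, etc. You instead exploit the fact that each $(n-2)$-cycle in $H$ fixes exactly two points, both in $\{r+1,\dots,n\}$, to get the \emph{additive} double-counting identity $2H^+=\sum_{i=r+1}^{n}(H\cap G_i)^+$, so that a single application of Weyl gives $\lambda_1(\rho_\zeta(H))\le\tfrac{n-r}{2}\,\lambda_1(\rho_\zeta(C(n-1,n-2;r)))$; the right-hand factor is then controlled by Theorem~\ref{thm:n-1_odd}(c) in $S_{n-1}$ (legitimately, since that theorem is established in Section~\ref{sec:proof_for_theorem_n-1} independently of Section~\ref{sec:n-2}), together with connectedness of $\mathrm{Cay}(S_{n-1},C(n-1,n-2;r))$ and the Branching Rule, which apply precisely when no $\zeta^-$ equals $(n-1)$, i.e.\ when $\zeta\ne(n),(n-1,1)$. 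This yields the uniform bound $\binom{n-r}{2}(n-4)!=-\alpha_3$, and your closing computation $\alpha_1+\alpha_3=(n-4)!\bigl(r(n-r)-n\bigr)>0$ is exactly right for $r\ge 2$, $n\ge 8$. Your route is the natural ``two fixed points'' analogue of the disjoint decomposition $H=\dot\cup_{i=r+1}^n P_i$ that the paper uses for $(n-1)$-cycles in Section~\ref{sec:proof_for_theorem_n-1}; it is shorter, avoids all per-partition case analysis and both auxiliary lemmas, and makes transparent where the hypothesis $r<(n-1)/2$ enters (it is the range of Theorem~\ref{thm:n-1_odd}(c)) and why the competing value is exactly the top eigenvalue of $(2,1^{n-2})$. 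What the paper's heavier machinery buys is finer per-partition information (e.g.\ $\lambda_1(\rho_\zeta(H))\le 0$ for certain shapes) and lemmas that are reused in the $r=1$ case (Lemma~\ref{lem:even_k=n-2_r=1}), but for the statement at hand your proof is a clean simplification.
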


\begin{proof}
      Let $H = C(n,n-2;r) $. If $\zeta=(n)$, then $\lambda_1(\rho_\zeta(H))=|H|=\binom{n-r}{2}(n-3)!$ is the largest eigenvalue of $\Gamma$. If $\zeta=(1^n)$, then $\lambda_1(\rho_\zeta(H))=-|H|=-\binom{n-r}{2}(n-3)!$ is the smallest eigenvalue of $\Gamma$. We then derive from Lemmas~\ref{lem:transpose} and \ref{lem:eigenvalues_standard_2} that \begin{align*}
		\lambda_1(\rho_{(n-1,1)}(H))&=\frac{1}{2}(n-4)!((n-r-1)(n+r-3)+(n-r-3))\\
		&=\frac{1}{2}(n-4)!(n^2-3n-r^2+r)
	\end{align*}
	and
	 \begin{align*}
		\lambda_1(\rho_{(2,1^{n-2})}(H))&=-\lambda_{\min}(\rho_{(n-1,1)}(H))\\
		&= \frac{1}{2}(n-4)!(n-r)(n-r-1)\\
		&=\frac{1}{2}(n-4)!(n^2-2rn-n+r^2+r)\\
		&<\frac{1}{2}(n-4)!(n^2-3n-r^2+r).
	\end{align*}
	Now that we have verified $\lambda_1(\rho_\zeta(H))$ for $\zeta=(n)$, $(1^n)$, $(n-1,1)$ and $(2,1^{n-2})$, we assume in the following that $\zeta\vdash n$ such that $\zeta\ne (n)$, $(1^n)$, $(n-1,1)$ or $(2,1^{n-2})$.

      Notice that $C(n,n-2;i)=C(n,n-2;i-1)\setminus (C(n,n-2;i-1)\cap G_i)$ for any $2\leq i\leq r$. By a similar analysis as at the beginning of Section~\ref{sec:proof_for_theorem_n-1},  we know that the spectrum of $\rho_\zeta(C(n,n-2;i-1)\cap G_i)$ is the same  as that of $\rho_\zeta(C(n-1,n-2;i-1))$ for any $\zeta\vdash n$. Thus by Weyl Inequalities, we have for any $2\leq i\leq r$,
\[ \lambda_1(\rho_\zeta(C(n,n-2;i)))\leq \lambda_1(\rho_\zeta(C(n,n-2;i-1)))-\lambda_{\min}(\rho_\zeta(C(n-1,n-2;i-1))), \]
and thus \begin{align}
	\lambda_1(\rho_\zeta(H))
	&\leq \lambda_1(\rho_\zeta(C(n,n-2;r-1)))-\lambda_{\min}(\rho_\zeta(C(n-1,n-2;r-1))) \nonumber \\ 
    &\leq \lambda_1(\rho_\zeta(C(n,n-2;r-2)))-\sum_{j=r-2}^{r-1}\lambda_{\min}(\rho_\zeta(C(n-1,n-2;j))) \nonumber \\
	&\leq \lambda_1(\rho_\zeta(C(n,n-2;1)))-\sum_{j=1}^{r-1} \lambda_{\min}(\rho_\zeta(C(n-1,n-2;j)))\nonumber.
\end{align}
Branching Rule implies that for  $1\leq j\leq r-1$, \[\rho_\zeta(C(n-1,n-2;j))=\mathop{\oplus}\limits_{\zeta^-} \rho_{\zeta^-}(C(n-1,n-2;j)),\] and so $\lambda_{\min}(\rho_\zeta(C(n-1,n-2;j)))=\min_{\zeta^-} \lambda_{\min}(\rho_{\zeta^-}(C(n-1,n-2;j)))$.
We then have the following inequality:
\begin{align}
	\lambda_1(\rho_\zeta(H))
	\leq \lambda_1(\rho_\zeta(C(n,n-2;1)))-\sum_{j=1}^{r-1} \min_{\zeta^-} \lambda_{\min}(\rho_{\zeta^-}(C(n-1,n-2;j))). \label{ineq:12}
\end{align}
 Now we make use of Lemma~\ref{lem:even_k=n-2_r=1_1} to estimate $\lambda_1(\rho_\zeta(C(n,n-2;1)))$ and Lemma~\ref{lem:n-1_even_smallest} to bound $\lambda_{\min}(\rho_{\zeta^-}(C(n-1,n-2;j)))$ for $1\leq j\leq r-1<(n-3)/2$. 
\begin{itemize}
	\item[$\bullet$] If $\zeta=(n-2,2)$ or $(n-m,1^m)$ with $2\leq m\leq n-3$, then by Lemma~\ref{lem:even_k=n-2_r=1_1}, $\lambda_1(\rho_\zeta(C(n,n-2;1)))\leq 0$. In this case, $\zeta^-=(n-3,2)$ or has the form $(n-1-s,1^s)$ with $1\leq s\leq n-3$. Lemmas~\ref{lem:transpose} and \ref{lem:multiplicity_n-1} give  
            \begin{align*}\label{eq:11}
	         	\lambda_{\min}(\rho_{(n-2,1)}(C(n-1,n-2;j))) 
	         =\begin{cases}
		      -(n-3)!,&\text{ if }j=1;\\
		       -(n-1-j)(n-4)!,& \text{ if }2\leq j\leq  r-1;\end{cases}
	        \end{align*} 
\[\lambda_{\min}(\rho_{(2,1^{n-3})}(C(n-1,n-2;j)))=-\lambda_{1}(\rho_{(n-2,1)}(C(n-1,n-2;j)))=-j(n-4)!.\]

Parts (a) and (b) in Lemma \ref{lem:n-1_even_smallest}  imply that when  $\zeta^-=(n-3,2)$ or $\zeta^-=(n-1-s,1^s)$ with $2\leq s\leq n-4$,
	 \[\lambda_{\min}(\rho_{\zeta^-}(C(n-1,n-2;j)))\ge \begin{cases}
	 	-2(n-3)(n-5)!,&\text{ if }1\leq j\leq 2;\\
	 	-j(n-4)!,&\text{ if }3\leq j\leq r-1.
	 \end{cases} \]

	 Therefore, $\lambda_{\min}(\rho_{(n-2,1)}(C(n-1,n-2;j)))=\min\limits_{\zeta^-} \lambda_{\min}(\rho_{\zeta^-}(C(n-1,n-2;j)))$ for every $j\in \{1,2,\ldots,r-1\}$, and by \eqref{ineq:12} \begin{align*}
	        	\lambda_1(\rho_\zeta(C(n,n-2;r)))&\leq (n-3)! + (n-4)!\sum_{j=2}^{r-1}(n-1-j) \\
	        	&=\frac{1}{2}(n-4)!(2nr-r^2-r-2n )\\
	        	&<\frac{1}{2}(n-4)!(n^2-3n-r^2+r).
	        \end{align*}

	\item[$\bullet$]  For any other  $\zeta$, we know from Lemma~\ref{lem:even_k=n-2_r=1_1} that  $\lambda_1(\rho_\zeta(C(n,n-2;1)))\leq (n-1)(n-4)!$. In this case, $\zeta^-\ne (n-1)$, $(n-2,1)$ or $(1^{n-1})$.  We have seen that $$\lambda_{\min}(\rho_{(2,1^{n-3})}(C(n-1,n-2;j)))=-j(n-4)!.$$ We also know from parts (a) and (b) of Lemma \ref{lem:n-1_even_smallest} that when $\zeta^-\ne (n-1)$, $(n-2,1)$, $(2,1^{n-3})$ or $(1^{n-1})$, 
	 \[\lambda_{\min}(\rho_{\zeta^-}(C(n-1,n-2;j)))\ge \begin{cases}
	 	-2(n-3)(n-5)!,&\text{ if }1\leq j\leq 2;\\
	 	-j(n-4)!,&\text{ if }3\leq j\leq r-1.
	 \end{cases} \]
	 Note that $-2(n-3)(n-5)\leq -j(n-4)!$ for $1\leq j\leq 2$.
    Hence we get by \eqref{ineq:12} \begin{align*}
    	\lambda_1(\rho_\zeta(C(n,n-2;r))) &\leq  (n-1)(n-4)!+4(n-3)(n-5)! + (n-4)!\sum_{j=3}^{r-1}j \\
    	&< \frac{1}{2}(n-4)!(n^2-3n-r^2+r).
    \end{align*}

 \end{itemize}

To sum up, the second largest eigenvalue of $\mathrm{Cay}(S_n,C(n,n-2;r))$ is attained uniquely by $(n-1,1)$.
\end{proof}

 Recall from Lemma~\ref{lem:HHC_upper_bound_1} and its remark  that the eigenvalue 
\begin{align}
	\mu_2(n,k;r)=(k-2) !\binom{n-r}{k-r} \frac{1}{n-r}\left((k-1)(n-k)-\frac{(k-r-1)(k-r)}{n-r-1}\right)\label{eq:mu2}
\end{align}
 	of  $\mathrm{Cay}(S_n,C(n,k;r))$ discovered  in \cite[Theorem 1.3]{SZ} is precisely $\lambda_2(\mathbf{B})$ with $\mathbf{B}$ the quotient matrix of the equitable partition $\Pi_i$ of $\mathrm{Cay}(S_n,C(n,k;r))$, and that $\mu_2(n,k;r)$ is also the largest eigenvalue of the standard representation $\rho_{(n-1,1)}$ of $S_n$ on $C(n,k;r)$, that is, $\alpha_1(\rho_{(n-1,1)}(C(n,k;r)))$. Consequently, $\mu_2(n,k;r)$ provides a lower bound for $\alpha_2(\mathrm{Cay}(S_n,C(n,k;r)))$. One can directly verify from \eqref{eq:mu2} the recurrence relation
\begin{equation}\label{eq:recurrence}
	\mu_2(n,k;r)=\mu_2(n-1,k;r)+\mu_2(n,k;r+1)
\end{equation}
for any $1\leq r\leq k-2$ and $k\leq n-2$.

According to Lemma~\ref{lem:generating_subgroup}, when $k$ is even, $H=C(n,k;r)$ generates $S_n$. In this case, for $r\in\{1,2,\ldots,k-2\}$ and $j\in [n]\setminus [r]$, the three graphs $\mathrm{Cay}(S_n,H)$, $\mathrm{Cay}(S_n,H\setminus( H\cap G_j))$ and $\mathrm{Cay}(G_j,H\cap G_j)$ are all connected, and the strictly second largest eigenvalue of each of them is exactly their second largest eigenvalue. Now we make use of Lemma~\ref{lem:HHC_upper_bound_1} to confirm the Aldous property of  $\mathrm{Cay}(S_n,C(n,k;r))$ when $k$ is even and $k\leq n-2$.

\begin{thm}\label{thm:even_n-2}
	If $k$ is even and $4\leq k\leq n-2$, then $\mathrm{Cay}(S_n,C(n,k;r))$ has the Aldous property for every $r\in \{1,2,\ldots, k-1\}$. 
\end{thm}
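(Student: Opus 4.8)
The plan is to prove the matching upper bound $\alpha_2(\mathrm{Cay}(S_n,C(n,k;r)))\le\mu_2(n,k;r)$; since $\mu_2(n,k;r)=\alpha_1(\rho_{(n-1,1)}(C(n,k;r)))$ is always a lower bound attained by the standard representation (see the remark after Lemma~\ref{lem:HHC_upper_bound_1}), equality then gives the Aldous property. As $k$ is even, Lemma~\ref{lem:generating_subgroup} shows $C(n,k;r)$ generates $S_n$, so $\Gamma=\mathrm{Cay}(S_n,C(n,k;r))$ is connected and $\alpha_2(\Gamma)=\lambda_2(\Gamma)$. Writing $H=C(n,k;r)$ and applying Lemma~\ref{lem:HHC_upper_bound_1} with $j=n$, the intersection $H\cap G_n$ is carried by $G_n\cong S_{n-1}$ onto $C(n-1,k;r)$, while conjugation by the transposition $(r+1\ n)$ identifies the complement $H\setminus(H\cap G_n)$ with $C(n,k;r+1)$. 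Hence every eigenvalue $\lambda$ of $\Gamma$ not coming from the quotient matrix $\mathbf{B}$ satisfies
\begin{equation*}
\lambda\le\lambda_2\bigl(\mathrm{Cay}(S_{n-1},C(n-1,k;r))\bigr)+\lambda_2\bigl(\mathrm{Cay}(S_n,C(n,k;r+1))\bigr),
\end{equation*}
and the eigenvalues of $\mathbf{B}$ other than $|H|$ are all at most $\lambda_2(\mathbf{B})=\mu_2(n,k;r)$. Together with the recurrence \eqref{eq:recurrence}, this reduces everything to showing that the two smaller graphs on the right enjoy the Aldous property, so that their $\lambda_2$'s equal $\mu_2(n-1,k;r)$ and $\mu_2(n,k;r+1)$, whose sum is exactly $\mu_2(n,k;r)$.

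This dictates a double induction, the outer one on $n$ (for fixed even $k\ge4$, with base $n=k+2$) and the inner one descending in $r$ from $k-1$. The inner base $r=k-1$ is Theorem~\ref{thm:induction_base}. For the step with $1\le r\le k-2$ and $k\le n-3$, the graph $\mathrm{Cay}(S_n,C(n,k;r+1))$ has larger $r$ and so satisfies the Aldous property by the inner hypothesis, while $\mathrm{Cay}(S_{n-1},C(n-1,k;r))$ lies in the range $4\le k\le(n-1)-2$ and so satisfies it by the outer hypothesis (when $k=n-3$, so $n$ is odd and $n-1$ even, this is precisely the base case on $S_{n-1}$). The displayed inequality and \eqref{eq:recurrence} then give $\alpha_2(\Gamma)\le\mu_2(n,k;r)$, closing the induction for all $k\le n-3$.

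The substantive work is the outer base $k=n-2$, where $n$ is even; here I would split on the size of $r$. For $n/2\le r\le n-4$ the same inequality and \eqref{eq:recurrence} apply, but now the factor $\mathrm{Cay}(S_{n-1},C(n-1,n-2;r))$ belongs to the $k'=(n-1)-1$ family on the \emph{odd} integer $n-1$ studied in Section~\ref{sec:proof_for_theorem_n-1}. By Theorem~\ref{thm:n-1_odd} this family has the Aldous property exactly when $r>(n-1)/2$, i.e.\ when $r\ge n/2$, so in this range $\lambda_2=\mu_2(n-1,n-2;r)$ and the recurrence closes the bound (the inner descending induction supplying the Aldous property of $\mathrm{Cay}(S_n,C(n,n-2;r+1))$). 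For the small range $1\le r<(n-1)/2$ this mechanism breaks down, and the gap is filled by the direct estimates of Lemmas~\ref{lem:even_k=n-2_r=1} and \ref{lem:k=n-2_2}, which treat $r=1$ and $2\le r<(n-1)/2$ respectively.

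I expect the main obstacle to be exactly this small-$r$ portion of the base case $k=n-2$: the recurrence fails because the restricted graph on $S_{n-1}$ is an exceptional member of the $(n-1)$-odd family whose second largest eigenvalue $(n-1-r)(n-4)!$ strictly exceeds $\mu_2(n-1,n-2;r)=r(n-4)!$ when $r<(n-1)/2$. There is then no alternative to bounding $\lambda_1(\rho_\zeta(H))$ for every non-standard $\zeta$ by hand, as in Lemmas~\ref{lem:even_k=n-2_r=1_1}, \ref{lem:even_k=n-2_r=1} and \ref{lem:k=n-2_2}, using Weyl Inequalities on the decomposition $H=C(n,n-2)\setminus(C(n,n-2)\cap G_1)$ and the character table of $(n-2)$-cycles. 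Finally, the smallest instance $n=6$ (with $k=4=n-2$) lies outside the hypotheses $n\ge8$ of Lemmas~\ref{lem:even_k=n-2_r=1} and \ref{lem:k=n-2_2} and would be settled by direct computation.
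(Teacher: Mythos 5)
Your proposal is correct and follows essentially the same route as the paper's own proof: the same double induction (descending in $r$ with base Theorem~\ref{thm:induction_base}, and on $n$ for fixed $k$, which is equivalent to the paper's induction on $n-k$), the same use of Lemma~\ref{lem:HHC_upper_bound_1} together with the recurrence \eqref{eq:recurrence}, the same appeal to Theorem~\ref{thm:n-1_odd} for the base $k=n-2$ when $r\ge n/2$, the same reliance on Lemmas~\ref{lem:even_k=n-2_r=1} and \ref{lem:k=n-2_2} for $1\le r<(n-1)/2$, and the same computational treatment of $n=6$. You also correctly identify why the recurrence mechanism fails for small $r$ in the base case, which is precisely the point the paper's auxiliary lemmas are designed to overcome.
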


\begin{proof}
	Theorem~\ref{thm:induction_base} indicates that $\mathrm{Cay}(S_n,C(n,k;r))$ with $r=k-1$ has the Aldous property, and thus $\lambda_2(\mathrm{Cay}(S_n,C(n,k;k-1)))=\mu_2(n,k;k-1)$. 


	First assume $k=n-2$.  We verify via computation in \textsc{Magma} \cite{BCP} that $(5,1)$ is the unique partition attaining the second largest eigenvalue of $\mathrm{Cay}(S_6,C(6,4;r))$ for $r=1$, $2$ and $3$.   Thus this theorem is true when $k=4$ and $1\leq r\leq 3$.  Lemmas~\ref{lem:even_k=n-2_r=1} and \ref{lem:k=n-2_2} show that when $k\ge 6$ and $1\leq r<(n-1)/2$, the conclusion of this theorem still holds.  Now suppose  $k\ge 6$ and $(n-1)/2\leq r\leq k-1= n-3$, and we prove the theorem by induction on $n-r$. When $n-r=3$, that is, $r=n-3$, we know from Theorem~\ref{thm:induction_base} that $\lambda_2(\mathrm{Cay}(S_n,C(n,n-2;n-3)))=\mu_2(n,n-2;n-3)$.  Suppose $\lambda_2(\mathrm{Cay}(S_n,C(n,n-2;n-t)))=\mu_2(n,n-2;n-t)$ for some $t\in \{3,4,\ldots,(n-2)/2\}$.  Let $n-r=t+1$, that is, $r=n-t-1\in\{n/2, (n+2)/2,\ldots, n-4 \}$, and let $H=C(n,n-2;r)$. By Lemma~\ref{lem:HHC_upper_bound_1}, if $\lambda$ is an eigenvalue of $\mathrm{Cay}(S_n,H)$ other than that of $\mathbf{B}$, then 
	\begin{equation}\label{eq:double_lambda_2}
		\lambda\leq \lambda_2\left(\operatorname{Cay}\left(G_n, H \cap G_n\right)\right)+\lambda_2\left(\operatorname{Cay}\left(S_n, H \backslash\left(H\cap G_n\right)\right)\right).
	\end{equation}
	Here $\mathrm{Cay}(G_n,H\cap G_n)$ is isomorphic to $\mathrm{Cay}(S_{n-1},C(n-1,n-2;r))$ and $\mathrm{Cay}(S_n,H\setminus(H\cap G_n))$ is isomorphic to $\mathrm{Cay}(S_n,C(n,n-2;r+1))$. By the induction hypothesis, we have $\lambda_2(\mathrm{Cay}(S_n,C(n,n-2;r+1)))=\mu_2(n,n-2;r+1)$.  As $(n-1)/2<r\leq n-4$, Theorem~\ref{thm:n-1_odd} (d) indicates that the second largest eigenvalue of $\mathrm{Cay}(S_{n-1},C(n-1,n-2;r))$ is attained uniquely by $(n-1,1)$, and thus $\lambda_2\left(\operatorname{Cay}\left(G_n, H \cap G_n\right)\right)=\mu_2(n-1,n-2;r)$. Then the right hand side of \eqref{eq:double_lambda_2} is  $\mu_2(n-1,n-2;r)+\mu_2(n,n-2;r+1)$, which is exactly $\mu_2(n,n-2;r)$ by the recurrence relation \eqref{eq:recurrence}. Thus \eqref{eq:double_lambda_2} gives $\lambda\leq \mu_2(n,n-2;r)$.  Combining this with \cite[Theorem 1.3]{SZ}, which states that $\mu_2(n,n-2;r)$ is a lower bound for $\lambda_2(\mathrm{Cay}(S_n,C(n,n-2;r)))$, we conclude that $\mu_2(n,n-2;r)$ is the second largest eigenvalue of $\mathrm{Cay}(S_n,C(n,n-2;r))$. Therefore, $\mathrm{Cay}(S_n,C(n,n-2;r))$ has the Aldous property for every $r\in \{1,2,\ldots,n-3\}$.

	Now we conclude the proof by induction on $n-k$. Suppose the conclusion holds when $n-k=i-1$ for some $i\in \{3,4,\ldots,n-4\}$. Next let $n-k=i$. 
	Since Theorem~\ref{thm:induction_base} states  that $\lambda_2(\mathrm{Cay}(S_n,C(n,n-i;n-i-1)))=\mu_2(n,n-i;n-i-1)$, we assume $r\in \{1,2,\ldots, n-i-2\}$ and let $H=C(n,n-i;r)$. By Lemma~\ref{lem:HHC_upper_bound_1}, if $\lambda$ is an eigenvalue of $\mathrm{Cay}(S_n,H)$ other than that of $\mathbf{B}$, then 
	\begin{align*}
		\lambda  \leq \lambda_2(\operatorname{Cay}(S_{n-1}, C(n-1,n-i;r) ))+\lambda_2(\operatorname{Cay}(S_n, C(n,n-i;r+1))). 
	\end{align*}
	As the conclusion of this theorem holds when $n-k=i-1$, we have  $\lambda_2(\mathrm{Cay}(S_{n-1},C(n-1,n-i;r))=\mu_2(n-1,n-i;r)$.   Then based on the recurrence relation \eqref{eq:recurrence} and \cite[Theorem 1.3]{SZ}, the equality $\lambda_2(\operatorname{Cay}(S_n, C(n,n-i;r+1)))=\mu_2(n,n-i;r+1)$ implies  $\lambda_2(\operatorname{Cay}(S_n, H))=\mu_2(n,n-i;r)$.  Hence we conclude that $\mathrm{Cay}(S_n,C(n,n-i;r))$ has the Aldous property for every $r\in \{1,2,\ldots, n-i-1\}$. 

	We finally arrive at that when $k$ is even and $4\leq k\leq n-2$, for any $1\leq r\leq k-1$, the second  largest eigenvalue of $\mathrm{Cay}(S_n,C(n,k;r))$ equals $\mu_2(n,k;r)$, which is exactly $\alpha_1(\rho_{(n-1,1)}(C(n,k;r)))$. In particular, $\mathrm{Cay}(S_n,C(n,k;r))$ has the Aldous property. 
\end{proof}

When $k$ is odd, $H=C(n,k;r)$ only generates $A_n$ according to Lemma~\ref{lem:generating_subgroup}. If $1\leq r\leq k-2$ and $j\in [n]\setminus [r]$, then the three graphs $\mathrm{Cay}(S_n,H)$, $\mathrm{Cay}(S_n,H\setminus( H\cap G_j))$ and $\mathrm{Cay}(G_j,H\cap G_j)$ all have two isomorphic components and their largest eigenvalues all have multiplicity $2$.  In this case, $\mathrm{Cay}(S_n,H)$ has the Aldous property if and only if
\begin{equation}\label{eq:aim_even_n-2}
	\lambda_3(\mathrm{Cay}(S_n,C(n,k;r)))=\mu_2(n,k;r) \quad \text{for}~1\leq r<k\leq n-2.
\end{equation}
 Thus when $k$ is odd, Lemma~\ref{lem:HHC_upper_bound_1} is not strong enough for proving \eqref{eq:aim_even_n-2}. 
Here we have two ways to fix this situation. The first way is to make the alternating group $A_n$ as the underlying group and apply \cite[Theorem 7]{HHC} to $\mathrm{Cay}(A_n,C(n,k;r))$. Since $\lambda_2(\mathrm{Cay}(A_n,C(n,k;r)))=\lambda_3(\mathrm{Cay}(S_n,C(n,k;r)))$, to prove \eqref{eq:aim_even_n-2} we only need to show $\lambda_2(\mathrm{Cay}(A_n,C(n,k;r)))=\mu_2(n,k;r)$.
The second way, which we adopt here, is that with some additional discussion, we can get a stronger version of Lemma~\ref{lem:HHC_upper_bound_1}, which still has $S_n$ as the underlying group. The key point is that when $k$ is odd, all permutations in $H=C(n,k;r)$ are even and thus $\rho_\zeta(H)=\rho_{\zeta'}(H)$ for any $\zeta \vdash n$. From the other point of view, if $\lambda_0$ is an eigenvalue of $\mathrm{Cay}(S_n,H)$ with $f$ an $\lambda_0$-eigenvector, then $g:=f\cdot \mathrm{sgn}$ is also an $\lambda_0$-eigenvector of $\mathrm{Cay}(S_n,H)$. In fact, for any $\sigma\in S_n$,
\begin{align*}
 	Ag(\sigma)&=\sum_{h\in H} g(\sigma h) =\sum_{h\in H} f(\sigma h)\cdot \mathrm{sgn}(\sigma h)= \sum_{h\in H} f(\sigma h)\cdot \mathrm{sgn}(\sigma) \\
 	&=\mathrm{sgn}(\sigma)\cdot \sum_{h\in H} f(\sigma h)=\mathrm{sgn}(\sigma)\cdot \lambda_0 f(\sigma)=\lambda_0g(\sigma),
 \end{align*} 
 where $A$ denotes the adjacency matrix of $\mathrm{Cay}(S_n,H)$. Here we use the facts that $\mathrm{sgn}(h)=1$ for every $h\in H$ and $Af(\sigma)=\sum_{h\in H}f(\sigma h)=\lambda_0 f(\sigma)$ for any $\sigma \in S_n$. In particular, the all one vector $\mathbf{1}$ and the vector $\mathbf{sgn}$ with its $\sigma$-entry $\mathrm{sgn}(\sigma)$ are the two eigenvectors of  the largest eigenvalue $|H|$ of $\mathrm{Cay}(S_n,H)$.  For $i\in [n]$, let $\mathbf{P}_{\Pi_i}$ denote the \emph{characteristic matrix} of the partition $\Pi_i$ of $\mathrm{Cay}(S_n,H)$ given in \eqref{eq:right_cosets_equitable_partition}, which is the $n! \times n$ matrix  with columns the characteristic vectors of $\Pi_i$. If $\lambda$ is an eigenvalue of $\mathrm{Cay}(S_n,H)$ other than that of $\mathbf{B}$ and $f$ is a $\lambda$-eigenvector of $\mathrm{Cay}(S_n,H)$, then $f$ is not only orthogonal to the column space of $\mathbf{P}_{\Pi_i}$ for every $i\in [n]$ but also  orthogonal to the column space of $\mathbf{\tilde{P}}_{\Pi_i}$, which is obtained by multiplying each $(\sigma,G_{j,i})$-entry of $\mathbf{P}_{\Pi_i}$ by $\mathrm{sgn}(\sigma)$. With these additional conditions on $f$, we deduce the following lemma by applying \cite[Theorem 7]{HHC} to $\mathrm{Cay}(S_n,C(n,k;r))$ for odd $k$.

\begin{lem}\label{lem:HHC_upper_bound}
	Let $H=C(n,k;r)$ and $\Gamma=\operatorname{Cay}(S_n, H)$, where $n \ge 5$ and $1 \le r < k < n$. The right coset decomposition $\Pi_i$ of $S_n$ given in \eqref{eq:right_cosets_equitable_partition} leads to an equitable partition of $\Gamma$, and the corresponding quotient matrix $\mathbf{B}=\mathbf{B}_{\Pi_i}$ is symmetric and independent of the choice of $i\in [n]$. Moreover, if $k$ is odd and $\lambda$ is an eigenvalue of $\Gamma$ other than that of $\mathbf{B}$, then, for each $j \in[n]$, we have
$$
\lambda \leqslant \lambda_3\left(\operatorname{Cay}\left(G_j, H \cap G_j\right)\right)+\lambda_3\left(\operatorname{Cay}\left(S_n, H \backslash\left(H\cap G_j\right)\right)\right),
$$
where $G_j$ is the stabilizer of $j$ in $S_n$.
\end{lem}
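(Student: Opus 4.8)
The plan is to adapt the proof of Lemma~\ref{lem:HHC_upper_bound_1} (that is, \cite[Theorem 7]{HHC}), exploiting the extra structure available when $k$ is odd. Write $A$ for the adjacency matrix of $\Gamma$ and split $A = A_1 + A_2$, where $A_1$ is the adjacency matrix of $\mathrm{Cay}(S_n, H \cap G_j)$ and $A_2$ is that of $\mathrm{Cay}(S_n, H \setminus (H \cap G_j))$. Since $H \cap G_j \subseteq G_j$, right multiplication by elements of $H\cap G_j$ preserves every left coset of $G_j$, so $\mathrm{Cay}(S_n, H \cap G_j)$ is a disjoint union of $n$ copies of $M_1 := \mathrm{Cay}(G_j, H \cap G_j)$, one on each left coset $xG_j$, and $A_1$ is block diagonal with respect to this decomposition. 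Let $f$ be a unit $\lambda$-eigenvector of $A$ with $\lambda$ not an eigenvalue of $\mathbf{B}$. Then $\lambda = f^\top A f = f^\top A_1 f + f^\top A_2 f$, and I would bound each Rayleigh quotient separately, aiming to show $f^\top A_1 f \le \lambda_3(M_1)$ and $f^\top A_2 f \le \lambda_3(A_2)$.

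The starting point is that, since $\lambda$ is not an eigenvalue of $\mathbf{B}$, the eigenvector $f$ is orthogonal to the column space of $\mathbf{P}_{\Pi_i}$ for every $i \in [n]$. As explained just before the statement, when $k$ is odd every element of $H$ is even, so $g := f \cdot \mathbf{sgn}$ is again a $\lambda$-eigenvector; hence $g$ is also orthogonal to the column space of $\mathbf{P}_{\Pi_i}$, and the identity $\langle f, \tilde{p}\rangle = \langle g, p \rangle$ for corresponding columns $p$ of $\mathbf{P}_{\Pi_i}$ and $\tilde p$ of $\tilde{\mathbf{P}}_{\Pi_i}$ shows that $f$ is orthogonal to the column space of $\tilde{\mathbf{P}}_{\Pi_i}$ for every $i$. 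The crucial observation is that the left coset $xG_j$ with $x(j)=i$ is exactly the part $G_{j,i}$ of $\Pi_i$; consequently the orthogonality relations force both $f$ and $f \cdot \mathbf{sgn}$ to sum to zero on each left coset of $G_j$. Equivalently, on every left coset $xG_j$ the restriction $f|_{xG_j}$ is orthogonal to both $\mathbf{1}$ and $\mathbf{sgn}$ restricted to that coset.

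Next I would use that, $k$ being odd, the connection sets $H \cap G_j$ and $H\setminus(H\cap G_j)$ consist of even permutations, so by the computation preceding the statement the vectors $\mathbf{1}$ and $\mathbf{sgn}$ are leading eigenvectors (for eigenvalue equal to the degree) of $M_1$ and of $A_2$, respectively. On each block $xG_j$ the restricted copy of $M_1$ has leading eigenspace spanned by $\mathbf{1}|_{xG_j}$ and $\mathbf{sgn}|_{xG_j}$ (the latter equals $\mathrm{sgn}(x)$ times the sign vector on $G_j$, hence is again a leading eigenvector), and since $f|_{xG_j}$ is orthogonal to both, the block Rayleigh quotient is at most $\lambda_3(M_1)$; summing over the $n$ blocks yields $f^\top A_1 f \le \lambda_3(M_1)$. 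For $A_2$, the global orthogonality $f \perp \mathbf{1}, \mathbf{sgn}$, together with the fact that $\mathbf{1}$ and $\mathbf{sgn}$ span (or lie in) the leading eigenspace of $A_2$, gives $f^\top A_2 f \le \lambda_3(A_2)$. Adding the two bounds produces exactly $\lambda \le \lambda_3(\mathrm{Cay}(G_j, H\cap G_j)) + \lambda_3(\mathrm{Cay}(S_n, H\setminus(H\cap G_j)))$, as required.

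The main obstacle, and the point where the odd-$k$ hypothesis is genuinely used, is upgrading the single-partition control of Lemma~\ref{lem:HHC_upper_bound_1} so that it governs $f$ on the left cosets of $G_j$ (the natural blocks of $A_1$), which a priori are not aligned with the right-coset partition $\Pi_j$. This is resolved by the identification $xG_j = G_{j,x(j)}$, which lets me invoke orthogonality to $\mathbf{P}_{\Pi_i}$ and $\tilde{\mathbf{P}}_{\Pi_i}$ across all $i$ simultaneously and thereby pin down both $f$ and $f\cdot\mathbf{sgn}$ coset by coset. A secondary point to verify is the dimension of the leading eigenspaces: by Lemma~\ref{lem:generating_subgroup} the relevant connection sets generate the appropriate alternating groups, so the leading eigenvalue of $M_1$ and of $A_2$ has multiplicity exactly two and $\lambda_3$ is strictly smaller than the degree, which is what makes the resulting bound useful; even without this, the Rayleigh estimates above remain valid and the stated inequality holds.
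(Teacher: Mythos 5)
Your proof is correct and takes essentially the same route as the paper: the paper also uses the sign-twisted eigenvector $f\cdot\mathrm{sgn}$ to get orthogonality of $f$ to the column spaces of both $\mathbf{P}_{\Pi_i}$ and $\mathbf{\tilde{P}}_{\Pi_i}$ for every $i$, and then reruns the argument of \cite[Theorem 7]{HHC} (the decomposition $A=A_1+A_2$ over the left cosets $xG_j=G_{j,x(j)}$ and the blockwise Rayleigh-quotient bounds) with these extra conditions to upgrade $\lambda_2$ to $\lambda_3$. The only difference is presentational: the paper merely cites the proof of \cite[Theorem 7]{HHC} at this step, whereas you have written that argument out explicitly.
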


Using Lemma \ref{lem:HHC_upper_bound}, the proof of the following theorem is similar to that of Theorem~\ref{thm:even_n-2}.

\begin{thm}\label{thm:odd_n-2}
	If $k$ is odd and $5\leq k\leq n-2$, then $\mathrm{Cay}(S_n,C(n,k;r))$ has the Aldous property for every $r\in \{1,2,\ldots,k-1\}$.
\end{thm}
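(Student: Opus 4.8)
The plan is to reproduce the argument of Theorem~\ref{thm:even_n-2} step for step, replacing the role of Lemma~\ref{lem:HHC_upper_bound_1} by its odd-$k$ strengthening Lemma~\ref{lem:HHC_upper_bound}. Since $k$ is odd, $H=C(n,k;r)$ generates only $A_n$ by Lemma~\ref{lem:generating_subgroup}, so $\Gamma=\mathrm{Cay}(S_n,H)$ has two isomorphic components, $\lambda_1(\Gamma)=\lambda_2(\Gamma)=|H|$ is attained simultaneously by $(n)$ and $(1^n)$, and the Aldous property is equivalent to $\lambda_3(\Gamma)=\mu_2(n,k;r)$ as recorded in \eqref{eq:aim_even_n-2}. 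Because $\mu_2(n,k;r)=\alpha_1(\rho_{(n-1,1)}(H))$ is always a lower bound for $\lambda_3(\Gamma)$ (it is the second eigenvalue $\lambda_2(\mathbf{B})$ of the quotient matrix, and, $k$ being odd, both $\rho_{(n-1,1)}$ and $\rho_{(2,1^{n-2})}$ realize it), it suffices to establish the matching upper bound $\lambda\le\mu_2(n,k;r)$ for every eigenvalue $\lambda$ of $\Gamma$ lying outside $\mathrm{Spec}(\mathbf{B})$. Throughout I would use the recurrence \eqref{eq:recurrence} as the device that collapses the two summands produced by Lemma~\ref{lem:HHC_upper_bound} into $\mu_2(n,k;r)$.

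First I would settle the base case $k=n-2$, which forces $n$ odd. The single small instance $(n,k)=(7,5)$ I would verify directly in \textsc{Magma} \cite{BCP} for each $r\in\{1,2,3,4\}$. For odd $n\ge 9$ I would induct on $n-r$, the base being $r=k-1=n-3$, where Theorem~\ref{thm:induction_base} already gives $\lambda_3=\mu_2(n,n-2;n-3)$. In the inductive step, applying Lemma~\ref{lem:HHC_upper_bound} with $j=n$ bounds every eigenvalue outside $\mathrm{Spec}(\mathbf{B})$ by
\[
\lambda\le\lambda_3\!\left(\mathrm{Cay}(S_{n-1},C(n-1,n-2;r))\right)+\lambda_3\!\left(\mathrm{Cay}(S_n,C(n,n-2;r+1))\right).
\]
The second summand equals $\mu_2(n,n-2;r+1)$ by the inductive hypothesis. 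The decisive point is the first summand: the relevant $(n-2)$-cycles now live in $S_{n-1}$ with $n-1\ge 8$ \emph{even}, so Theorem~\ref{thm:n-1_even}(b) applies and yields $\lambda_3=r(n-4)!=\mu_2(n-1,n-2;r)$ for \emph{all} admissible $r$. Adding the two summands and invoking \eqref{eq:recurrence} gives $\lambda\le\mu_2(n,n-2;r)$, which closes the base case.

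With the base case in hand I would run the outer induction on $n-k$ exactly as in Theorem~\ref{thm:even_n-2}. Assuming the statement at level $n-k=i-1$, for $k=n-i$ and $r\le k-2$ Lemma~\ref{lem:HHC_upper_bound} bounds each eigenvalue outside $\mathrm{Spec}(\mathbf{B})$ by $\lambda_3(\mathrm{Cay}(S_{n-1},C(n-1,n-i;r)))+\lambda_3(\mathrm{Cay}(S_n,C(n,n-i;r+1)))$. The first graph sits at level $(n-1)-k=i-1$ and so contributes $\mu_2(n-1,n-i;r)$ by the outer hypothesis, while the second contributes $\mu_2(n,n-i;r+1)$ by an inner induction on $r$ whose base $r=k-1$ is Theorem~\ref{thm:induction_base}; the recurrence \eqref{eq:recurrence} again collapses the sum to $\mu_2(n,n-i;r)$. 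Combined with the lower bound from \cite[Theorem 1.3]{SZ}, this forces $\lambda_3(\Gamma)=\mu_2(n,k;r)$, i.e.\ the Aldous property.

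The main obstacle, and the essential difference from the even case, lies entirely in the $k=n-2$ base. In Theorem~\ref{thm:even_n-2} the corresponding $S_{n-1}$-factor has \emph{odd} $n-1$, and Theorem~\ref{thm:n-1_odd} shows its strictly second largest eigenvalue equals $\mu_2$ only for $r>(n-1)/2$; this is precisely why that proof needed the separate small-$r$ Lemmas~\ref{lem:even_k=n-2_r=1} and \ref{lem:k=n-2_2}. Here the parity is reversed: the $S_{n-1}$-factor has \emph{even} $n-1$, and Theorem~\ref{thm:n-1_even} delivers $\lambda_3=\mu_2$ uniformly in $r$, so no auxiliary small-$r$ lemmas are required and the induction on $n-r$ runs unobstructed. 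The one place this uniformity breaks is the unique exception $(n-1,r)=(6,1)$ of Theorem~\ref{thm:n-1_even}, namely the graph $(6,5,1)$ appearing in Theorem~\ref{thm:main}; this is exactly what compels the direct \textsc{Magma} check of the smallest instance $n=7$, after which every larger odd $n$ goes through cleanly. I would also note that the attainment ambiguity of Theorem~\ref{thm:n-1_even}(b.1) for $r\le 2$ is harmless, since only the \emph{value} $r(n-4)!$ of $\lambda_3$ enters the bound.
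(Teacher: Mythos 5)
Your proposal is correct and follows essentially the same route as the paper's proof: a \textsc{Magma} check of $(n,k)=(7,5)$ for $r\in\{1,2,3,4\}$, induction on $n-r$ in the base case $k=n-2$ via Lemma~\ref{lem:HHC_upper_bound}, Theorem~\ref{thm:n-1_even}(b) and the recurrence \eqref{eq:recurrence}, followed by the outer induction on $n-k$. Your observation that the parity reversal makes Theorem~\ref{thm:n-1_even} available uniformly in $r$ (so no analogues of Lemmas~\ref{lem:even_k=n-2_r=1} and \ref{lem:k=n-2_2} are needed), with the single exception $(6,5,1)$ being precisely what forces the direct $n=7$ computation, matches the structure of the paper's argument exactly.
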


\begin{proof}
Theorem~\ref{thm:induction_base} indicates that $\mathrm{Cay}(S_n,C(n,k;r))$ with $r=k-1$ has the Aldous property, and thus $\lambda_3(\mathrm{Cay}(S_n,C(n,k;k-1)))=\mu_2(n,k;k-1)$.  


	First assume $k=n-2$. We verify via computation in \textsc{Magma} \cite{BCP} that the strictly second largest eigenvalue of $\mathrm{Cay}(S_7,C(7,5;r))$ with $r=1$, $2$, $3$ and $4$ is attained exactly by $(6,1)$ and $(2,1^5)$. Thus this theorem is true when $k=5$ and $1\leq r\leq 4$. Now suppose $k\ge 7$ and $1\leq r\leq k-1= n-3$. We prove the conclusion of this theorem by induction on $n-r$. When $n-r=3$, that is, $r=n-3$, we know from Theorem~\ref{thm:induction_base} that $\lambda_3(\mathrm{Cay}(S_n,C(n,n-2;n-3)))=\mu_2(n,n-2;n-3)$.   Suppose $\lambda_3(\mathrm{Cay}(S_n,C(n,n-2;n-t)))=\mu_2(n,n-2;n-t)$ for some $t\in \{3,4,\ldots, n-2\}$.  Let $n-r=t+1$, that is, $r=n-t-1\in\{ 1,2,\ldots,n-4  \}$, and let $H=C(n,n-2;r)$. By Lemma~\ref{lem:HHC_upper_bound}, if $\lambda$ is an eigenvalue of $\mathrm{Cay}(S_n,H)$ other than that of $\mathbf{B}$, then 
	\begin{equation}\label{eq:double_lambda_3}
		\lambda\leq \lambda_3\left(\operatorname{Cay}\left(G_n, H \cap G_n\right)\right)+\lambda_3\left(\operatorname{Cay}\left(S_n, H \backslash\left(H\cap G_n\right)\right)\right).
	\end{equation}
	Here $\mathrm{Cay}(G_n,H\cap G_n)$ is isomorphic to $\mathrm{Cay}(S_{n-1},C(n-1,n-2;r))$ and $\mathrm{Cay}(S_n,H\setminus(H\cap G_n))$ is isomorphic to $\mathrm{Cay}(S_n,C(n,n-2;r+1))$. By the induction hypothesis, we have $\lambda_3(\mathrm{Cay}(S_n,C(n,n-2;r+1)))=\mu_2(n,n-2;r+1)$.  Theorem~\ref{thm:n-1_even} (b) indicates that the strictly second largest eigenvalue of $\mathrm{Cay}(S_{n-1},C(n-1,n-2;r))$ is attained by $(n-2,1)$, and thus $\lambda_3\left(\operatorname{Cay}\left(G_n, H \cap G_n\right)\right)=\mu_2(n-1,n-2;r)$. Then  the right hand side of \eqref{eq:double_lambda_3} is  $\mu_2(n-1,n-2;r)+\mu_2(n,n-2;r+1)$, which is exactly $\mu_2(n,n-2;r)$ by the recurrence relation \eqref{eq:recurrence}. Thus \eqref{eq:double_lambda_3} gives $\lambda\leq \mu_2(n,n-2;r)$.  Combining this with \cite[Theorem 1.3]{SZ}, which states that $\mu_2(n,n-2;r)$ is a lower bound for $\lambda_3(\mathrm{Cay}(S_n,C(n,n-2;r)))$, we conclude that $\mu_2(n,n-2;r)$ is the strictly second largest eigenvalue of $\mathrm{Cay}(S_n,C(n,n-2;r))$. Therefore, $\mathrm{Cay}(S_n,C(n,n-2;r))$ has the Aldous property for every $r\in \{1,2,\ldots,n-3\}$. 

    Now we conclude the proof by induction on $n-k$. Suppose the conclusion holds when $n-k=i-1$ for some $i\in \{3,4,\ldots,n-4\}$. Next let $n-k=i$. Since Theorem~\ref{thm:induction_base}  states that $\lambda_3(\mathrm{Cay}(S_n,C(n,n-i;n-i-1)))=\mu_2(n,n-i;n-i-1)$, we assume $r\in\{1,2,\ldots,n-i-2\}$ and let $H=C(n,n-i;r)$. By Lemma~\ref{lem:HHC_upper_bound_1}, if $\lambda$ is an eigenvalue of $\mathrm{Cay}(S_n,H)$ other than that of $\mathbf{B}$, then 
	\begin{align*}
		\lambda  \leq \lambda_3(\operatorname{Cay}(S_{n-1}, C(n-1,n-i;r) ))+\lambda_3(\operatorname{Cay}(S_n, C(n,n-i;r+1))). 
	\end{align*}
	As this theorem holds when $n-k=i-1$, we have $\lambda_3(\mathrm{Cay}(S_{n-1},C(n-1,n-i;r))=\mu_2(n-1,n-i;r)$.   Then based on the recurrence relation \eqref{eq:recurrence} and \cite[Theorem 1.3]{SZ},  the equality $\lambda_3(\operatorname{Cay}(S_n, C(n,n-i;r+1)))=\mu_2(n,n-i;r+1)$ implies $\lambda_3(\operatorname{Cay}(S_n, H ))=\mu_2(n,n-i;r)$.  Hence we conclude that $\mathrm{Cay}(S_n,C(n,n-i;r))$ has the Aldous property for every $r\in\{1,2,\ldots, n-i-1\}$. 

	 We finally arrive at that when $k$ is odd and $5\leq k\leq n-2$, for any $1\leq r\leq k-1$, the strictly second largest eigenvalue of $\mathrm{Cay}(S_n,C(n,k;r))$ equals $\mu_2(n,k;r)$, which is exactly $\alpha_1(\rho_{(n-1,1)}(C(n,k;r)))$. In particular, $\mathrm{Cay}(S_n,C(n,k;r))$ has the Aldous property. 
\end{proof}

We finally propose the following conjecture on the strictly second largest eigenvalue of $\mathrm{Cay}(S_n,C(n,k;r))$ with $1\leq r<k\leq n-2$, which is stronger than Theorems~\ref{thm:even_n-2} and \ref{thm:odd_n-2}. 

\begin{cx}
Suppose that $n \ge 5$ and $1\leq r<k\leq n-2$. When $k$ is even, the second largest eigenvalue of $\mathrm{Cay}(S_n,C(n,k;r))$ is attained uniquely by the standard representation $\rho_{(n-1,1)}$. When $k$ is odd, the strictly second largest eigenvalue of $\mathrm{Cay}(S_n,C(n,k;r))$ is attained exactly by  $\rho_{(n-1,1)}$ and $\rho_{(2,1^{n-2})}$.
\end{cx}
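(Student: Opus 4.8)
The plan is to upgrade the value statements of Theorems~\ref{thm:even_n-2} and~\ref{thm:odd_n-2} into the uniqueness statements of the conjecture by establishing the strict separation
\[
\lambda_1(\rho_\zeta(C(n,k;r))) < \mu_2(n,k;r)
\]
for every $\zeta \vdash n$ with $\zeta \notin \{(n),(1^n),(n-1,1),(2,1^{n-2})\}$. Together with the fact that $\mu_2(n,k;r)=\alpha_1(\rho_{(n-1,1)}(C(n,k;r)))$ (the remark after Lemma~\ref{lem:HHC_upper_bound_1}) and Proposition~\ref{prop1.1}, this strict bound would pin down exactly which representations attain the strictly second largest eigenvalue.

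First I would explain the asymmetry between the even and odd cases. By Lemma~\ref{lem:transpose}, $\rho_{\zeta'}(H)=\mathrm{sgn}(\cdot)\rho_\zeta(H)$. When $k$ is odd every element of $H=C(n,k;r)$ is even, so $\rho_{(n-1,1)}(H)=\rho_{(2,1^{n-2})}(H)$ and the two conjugate partitions are forced to share the eigenvalue $\mu_2(n,k;r)$; this is precisely why two representations appear in the odd case. When $k$ is even, $H$ contains odd permutations and no such coincidence is forced, so $(n-1,1)$ is expected to be the unique attainer. This accounts for the shape of the conjecture and reduces everything to the strict bound above for the remaining $\zeta$.

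For the strict bound itself, the method of Section~\ref{sec:proof_for_theorem_n-1} generalizes in form. Writing $C(n,k;r)=\dot\cup_{i=r+1}^{n} P_i$ with $P_i=C(n,k)\cap G_i$, each $P_i$ is a conjugate of the conjugacy class $C(n-1,k)$ of $k$-cycles inside a copy of $S_{n-1}$; by the Branching Rule and Schur's Lemma the eigenvalues of $\rho_\zeta(P_i)$ are the scalars $|C(n-1,k)|\,\tilde\chi_{\zeta^-}(\sigma)$ over the branchings $\zeta^-$, where $\sigma$ is a $k$-cycle. Weyl Inequalities then give $\lambda_1(\rho_\zeta(C(n,k;r)))\le (n-r)\,\lambda_1(\rho_\zeta(C(n-1,k)))$, while the complementary decomposition $C(n,k;r)=C(n,k)\setminus(\dot\cup_{i=1}^{r}P_i)$ yields a second bound in terms of the scalar $\rho_\zeta(C(n,k))$ and $\lambda_{\min}(\rho_\zeta(C(n-1,k)))$. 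Taking the better of the two bounds, exactly as in Lemmas~\ref{lem:C(n,n-1;r)} and~\ref{lem:even_k=n-2_r=1_1}, and comparing with the closed form~\eqref{eq:mu2} should produce the desired strict inequality, provided the relevant normalized $k$-cycle characters can be controlled.

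The main obstacle is precisely that character control. For $k=n-1$ and $k=n-2$ the nonzero $k$-cycle characters are supported only on hooks and near hooks and admit clean factorial formulas (Lemmas~\ref{lem:n-cycle} and~\ref{lem:n-1_cycles}, Table~\ref{tab:tab4}), which makes the case analysis finite and explicit. For a general $k$ with $4\le k\le n-2$ the Murnaghan--Nakayama rule yields nonzero characters on a genuinely growing family of partitions, with no comparably short list or factorial formula available. Pushing the argument through would likely require uniform upper bounds on $|\tilde\chi_\zeta(\sigma)|$ for $k$-cycles that decay fast enough in the ``width'' of $\zeta$ (of Roichman or Larsen--Shalev type), together with a sharper, separate treatment of the finitely many near-extremal partitions, namely the analogues of $(n-2,1^2)$, $(3,1^{n-3})$, $(n-2,2)$, $(2,2,1^{n-4})$ that already surface as genuine obstructions in the boundary case $k=n-1$. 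An appealing alternative is to rerun the induction of Theorems~\ref{thm:even_n-2} and~\ref{thm:odd_n-2}, tracking the attaining representation through each use of Lemma~\ref{lem:HHC_upper_bound_1}: if by induction the two smaller graphs are each attained uniquely by their standard representations, one hopes the equality case of the Weyl bound forces the eigenvector to align with the standard representation in both factors. The difficulty, and the reason this remains only conjectural, is that the Weyl Inequalities give no information about eigenvectors at equality, so closing this step would demand an additional eigenspace-alignment argument beyond the present toolkit.
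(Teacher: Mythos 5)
The first thing to say is that the statement you were asked to prove is not a theorem of the paper at all: it is posed there as an open conjecture, explicitly stated to be \emph{stronger} than Theorems~\ref{thm:even_n-2} and~\ref{thm:odd_n-2}. The paper itself proves only the non-strict bound $\lambda\leq \mu_2(n,k;r)$ (via Lemmas~\ref{lem:HHC_upper_bound_1} and~\ref{lem:HHC_upper_bound}) and verifies the uniqueness claim in the special cases covered by Lemmas~\ref{lem:even_k=n-2_r=1} and~\ref{lem:k=n-2_2} ($n$ even, $k=n-2$, $1\leq r<(n-1)/2$, together with some \textsc{Magma} checks). So your concluding admission that the statement ``remains only conjectural'' is in fact the correct assessment, and matches the paper's own discussion following the conjecture: the obstruction is precisely that \eqref{eq:for cx} cannot be made strict with the available tools, and that the Weyl-type bounds carry no eigenvector information at equality. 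On these points your diagnosis is accurate, as is your explanation of the even/odd asymmetry via Lemma~\ref{lem:transpose} (with the small correction that for even $k$ \emph{all} elements of $C(n,k;r)$ are odd permutations, so $\rho_{(2,1^{n-2})}(H)=-\rho_{(n-1,1)}(H)$; one must then also rule out $-\lambda_{\min}(\rho_{(n-1,1)}(H))$ reaching $\mu_2(n,k;r)$, as the paper does in Lemma~\ref{lem:k=n-2_2}).

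There is, however, one concrete error in the route you sketch: the decomposition $C(n,k;r)=\dot\cup_{i=r+1}^{n}P_i$ with $P_i=C(n,k)\cap G_i$, which drives all of Section~\ref{sec:proof_for_theorem_n-1}, is valid \emph{only} for $k=n-1$, because an $(n-1)$-cycle fixes exactly one point. For $k\leq n-2$ every $k$-cycle fixes $n-k\geq 2$ points, so the sets $P_{r+1},\ldots,P_n$ are pairwise non-disjoint, each element of $C(n,k;r)$ lies in exactly $n-k$ of them, and the union moreover contains $k$-cycles that fix points of $[r]$; hence $\rho_\zeta(C(n,k;r))\neq\sum_{i=r+1}^{n}\rho_\zeta(P_i)$ and the Weyl bound $\lambda_1(\rho_\zeta(C(n,k;r)))\leq(n-r)\lambda_1(\rho_\zeta(C(n-1,k)))$ does not follow. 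The complementary identity $C(n,k;r)=C(n,k)\setminus\cup_{i=1}^{r}P_i$ is correct as a set identity, but the union is not disjoint once $r\geq 2$, so $\rho_\zeta(C(n,k;r))=\rho_\zeta(C(n,k))-\sum_{i=1}^{r}\rho_\zeta(P_i)$ also fails; one would need an inclusion--exclusion correction over all subsets of $[r]$, which destroys the two-term Weyl estimate. This is exactly why the paper confines that technique to $k=n-1$ and, for $k\leq n-2$, switches to peeling off one stabilizer at a time (as in Lemma~\ref{lem:k=n-2_2}, using $C(n,k;i)=C(n,k;i-1)\setminus\bigl(C(n,k;i-1)\cap G_i\bigr)$, which \emph{is} exact) combined with the equitable-partition bound and the recurrence \eqref{eq:recurrence}. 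Any serious attack on the conjecture along your first line would have to rebuild the eigenvalue bounds on top of these valid decompositions, in addition to overcoming the character-control and equality-case issues you already identify.
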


The proofs of Theorems~\ref{thm:even_n-2} and \ref{thm:odd_n-2}  rely on Lemmas~\ref{lem:HHC_upper_bound_1} and \ref{lem:HHC_upper_bound}, and we are only able to show that if $\lambda$ is an eigenvalue of $\mathrm{Cay}(S_n,C(n,k;r))$ other than that of $\mathbf{B}$, then  \begin{align}
	 \lambda\leq \mu_2(n,k;r). \label{eq:for cx}
\end{align}  However, the above conjecture requires \eqref{eq:for cx} to be a strict inequality, that is, $\lambda< \mu_2(n,k;r)$. Lemmas~\ref{lem:even_k=n-2_r=1} and \ref{lem:k=n-2_2} state that this conjecture is true for $\mathrm{Cay}(S_n,C(n,n-2;r))$ with $n$ even and $1\leq r\leq (n-1)/2$.

\end{document}